\documentclass[a4paper,10pt]{amsproc}
\usepackage{graphicx}
\usepackage{amsfonts, amsmath, amssymb, graphicx, mathrsfs}
\usepackage[all]{xy}
\usepackage{color}

\newdir{ >}{!/6pt/@{ }*:(1,0.2)@_{>}*:(1,-0.2)@^{>}}

\newtheorem{theorem}{Theorem}[section]
\newtheorem{lemma}[theorem]{Lemma}

\newtheorem{definition}[theorem]{Definition}

\newtheorem{remark}[theorem]{Remark}

\newtheorem{counter example}[theorem]{Counter Example}

%\numberwithin{equation}{section}

\definecolor{dgreen}{rgb}{0,.5,.0}
\definecolor{purple}{rgb}{.5,0,.5}
\definecolor{lgreen}{rgb}{.5,1,.5}
\definecolor{dyellow}{rgb}{.5,.5,0}
\definecolor{vdyellow}{rgb}{.25,.25,0}
\definecolor{dred}{rgb}{.5,0,0}
\definecolor{dblue}{rgb}{0,0,.5}
\definecolor{dpurple}{rgb}{.5,0,.5}

\newcommand{\C}{\mathbb{C}}

\begin{document}
%\Large{
\title[Complex-valued continuous functions]{Intermediate rings of complex-valued continuous functions}

\author[A. Acharyya]{Amrita Acharyya}
\address{Department of Mathematics and Statistics,
	University of Toledo, Main Campus,
	Toledo, OH 43606-3390}
\email{amrita.acharyya@utoledo.edu}

\author[S. K. Acharyya]{Sudip Kumar Acharyya}
\address{Department of Pure Mathematics, University of Calcutta, 35, Ballygunge Circular Road, Kolkata 700019, West Bengal, India}
\email{sdpacharyya@gmail.com}

\author[S. bag]{Sagarmoy Bag}
\address{Department of Pure Mathematics, University of Calcutta, 35, Ballygunge Circular Road, Kolkata 700019, West Bengal, India}
\thanks{The author thanks the NBHM, Mumbai-400 001, India, for financial support}
\email{sagarmoy.bag01@gmail.com}

\author[J. Sack]{Joshua Sack} % changed order of authors to alphabetical order
\address{Department of Mathematics and Statistics, California State University Long Beach, 1250 Bellflower Blvd, Long Beach, CA 90840, USA}
\email{joshua.sack@csulb.edu}

\subjclass[2010]{Primary 54C40; Secondary 46E25}
%                                                                                                                           %
%         Please use the current 2010 Mathematics Subject Classification:             %
%         http://www.ams.org/mathscinet/msc/                                                        %
%         http://www.zentralblatt-math.org/msc/en/                                                 %
%%%%%%%%%%%%%%%%%%%%%%%%%%%%%%%%%%%%%%%%%%%%%%%%%%%

\keywords{Intermediate rings, Almost P-spaces, P-spaces, $z$-ideals, $z^\circ$-ideals, Absolutely convex ideals}
\thanks {}

\maketitle

\section*{Abstract}
Let $\Sigma (X,\mathbb{C})$ denote the collection of all the rings between $C^*(X,\mathbb{C})$ and $C(X,\mathbb{C})$. We show that there is a natural correlation between the absolutely convex ideals/ prime ideals/maximal ideals/$z$-ideals/$z^\circ$-ideals in the rings $P(X,\mathbb{C})$ in $\Sigma(X,\mathbb{C})$ and in their real-valued counterparts $P(X,\mathbb{C})\cap C(X)$. 
It is shown that the structure space of any such $P(X,\mathbb{C})$  is $\beta X$. We show that for any maximal ideal $M$ in $C(X,\mathbb{C}), C(X,\mathbb{C})/M$ is an algebraically closed field. We give a necessary and sufficient condition for the ideal $C_{\mathcal{P}}(X,\mathbb{C})$ of $C(X,\mathbb{C})$ to be a prime ideal, and we examine a few special cases thereafter. 

\section{Introduction}
In what follows, $X$ stands for a completely regular Hausdorff topological space and $C(X,\mathbb{C})$ denotes the ring of all complex-valued continuous functions on $X$. $C^*(X,\mathbb{C})$ is the subring of $C(X,\mathbb{C})$ containing those functions which are bounded over $X$. As usual $C(X)$ designates the ring of all real-valued continuous functions on $X$ and $C^*(X)$ consists of those functions in $C(X)$ which are bounded over $X$. An intermediate ring of real-valued continuous functions on $X$ is a ring that lies between $C^*(X)$ and $C(X)$. Let $\Sigma (X)$ be the aggregate of all such rings.
Likewise an intermediate ring of complex-valued continuous functions on $X$ is a ring lying between $C^*(X,\mathbb{C})$ and $C(X,\mathbb{C})$. Let $\Sigma (X,\mathbb{C})$ be the family of all such intermediate rings.
It turns out that each member $P(X,\mathbb{C})$ of $\Sigma (X,\mathbb{C})$ is absolutely convex in the sense that $\lvert f\rvert \leq \lvert g\rvert, g\in P(X,\mathbb{C}), f\in C(X,\mathbb{C})$ implies $f\in P(X,\mathbb{C})$. It follows that each such $P(X,\mathbb{C})$ is \emph{conjugate-closed} in the sense that if whenever $f+ ig\in P(X,\mathbb{C})$ where $f,g\in C(X)$, then $f-ig\in P(X,\mathbb{C})$.
It is realised that there is a natural correlation between the prime ideals/ maximal ideals/ $z$-ideals/ $z^\circ$-ideals in the rings $P(X,\mathbb{C})$ and the prime ideals / maximal ideals/ $z$-ideals in the ring $P(X,\mathbb{C})\cap C(X)$. In the second and third sections of this article, we examine these correlations in some details. Incidentally an interconnection between prime ideals in the two rings $C(X,\mathbb{C})$ and $C(X)$ is already observed long time back in Corollary 1.2\cite{Alling}. As a follow up of our investigations on the ideals in these two rings, we establish that the structure spaces of the two rings $P(X,\mathbb{C})$ and $P(X,\mathbb{C})\cap C(X)$ are homeomorphic.
The structure space of a commutative ring $R$ with unity stands for the set of all maximal ideals of $R$ equipped with the well-known hull-kernel topology. It was established in \cite{P} and \cite{RW1987}, independently that the structure space of all the intermediate rings of real-valued continuous functions on $X$ are one and the same viz the Stone-\v{C}ech compactification $\beta X$ of $X$. It follows therefore that the structure space of each intermediate ring of complex-valued continuous functions on $X$ is also $\beta X$.
This is one of the main technical results in our article. We like to mention in this context that a special case of this result telling that the structure space of $C(X,\mathbb{C})$ is $\beta X$ is quite well known, see \cite{MR}. We call a ring $A(X,\mathbb{C})$ in the family $\Sigma (X,\mathbb{C})$ a $C$-type ring if it is isomorphic to a ring of the form $C(Y,\mathbb{C})$ for a Tychonoff space $Y$. We establish that if $I$ is any ideal of $C(X,\mathbb{C})$, then the linear sum $C^*(X,\mathbb{C})+I$ is a $C$-type ring.
This is the complex analogue of the corresponding result in the intermediate rings of real-valued continuous functions on $X$ as proved in \cite{DO}. We further realise that these are the only $C$-type intermediate rings in the family $\Sigma (X,\mathbb{C})$ when and only when $X$ is pseudocompact i.e. $C(X,\mathbb{C})=C^*(X,\mathbb{C})$.

It is well-known that if $M$ is a maximal ideal in $C(X)$, then the residue class field $C(X)/M$ is real closed in the sense that every positive element in this field is a square and each odd degree polynomial over this field has a root in the same field \cite[Theorem 13.4]{GJ}.
The complex analogue of this result as we realise is that for a maximal ideal $M$ in $C(X,\mathbb{C}), C(X,\mathbb{C})/M$ is an algebraically closed field and furthermore this field is the algebraic closure of $C(X)/M\cap C(X)$.

In section 4 of this article, we deal with a few special problems originating from an ideal $\mathcal{P}$ of closed sets in $X$ and a certain class of ideals in the ring $C(X,\mathbb{C})$.
A family $\mathcal{P}$ of closed sets in $X$ is called an ideal of closed sets in $X$ if for any two sets $A, B$ in $\mathcal{P}, A\cup B\in \mathcal{P}$ and for any closed set $C$ contained in $A, C$ is also a member of $\mathcal{P}$. We let $C_\mathcal{P} (X,\mathbb{C})$ be the set of all those functions $f$ in $C(X,\mathbb{C})$ whose support $cl_X (X\setminus Z(f))$ is a member of $\mathcal{P}$; here $Z(f)=\{x\in X:f(x)=0\}$ is the zero set of $f$ in $X$.
We determine a necessary and sufficient condition for $C_\mathcal{P} (X,\mathbb{C})$ to become a prime ideal in the ring $C(X,\mathbb{C})$ and examine a few special cases corresponding to some specific choices of the ideal $\mathcal{P}$.
The ring $C_\infty (X,\mathbb{C})=\{f\in C(X,\mathbb{C}): f \textrm{ vanishes at infinity in the sense that for each }n\in \mathbb{N}, \{x\in X: \lvert f(x)\rvert\geq \frac{1}{n}\}\textrm{ is compact}\}$ is an ideal of $C^*(X,\mathbb{C})$ but not necessarily an ideal of $C(X,\mathbb{C})$. On the assumption that $X$ is locally compact, we determine a necessary and sufficient condition for $C_\infty (X,\mathbb{C})$ to become an ideal of $C(X,\mathbb{C})$.

The fifth section of this article is  devoted to finding out the estimates of a few standard parameters concerning zero divisor graphs of a few rings of complex-valued continuous functions on $X$. Thus for instance we have checked that if $\Gamma (A(X,\mathbb{C}))$ is the zero divisor graph of an intermediate ring $A(X,\mathbb{C})$ belonging to the family $\Sigma (X,\mathbb{C})$, then each cycle of this graph has length 3, 4 or 6 and each edge is an edge of a cycle with length 3 or 4. These are the complex analogous of the corresponding results in the zero divisor graph of $C(X)$ as obtained in \cite{AM}.

\section{Ideals in intermediate rings}

Notation: For any subset $A(X)$ of $C(X)$ such that $0\in A(X)$, we set $[A(X)]_c=\{f+ig: f,g\in A(X)\}$ and call it the \emph{extension} of $A(X)$. Then it is easy to see that $[A(X)]_c\cap C(X)=A(X)=[A(X)]_c\cap A(X)$. From now on, unless otherwise stated, we assume that $A(X)$ is an intermediate ring of real-valued continuous functions on $X$, i.e. $A(X)$ is a member of the family $\Sigma (X)$. It follows at once that $[A(X)]_c$ is an intermediate ring of complex-valued continuous functions and it is not hard to verify that $[A(X)]_c$ is the smallest intermediate ring in $\Sigma(X,\C)$ which contains $A(X)$ and the constant function $i$. Furthermore $[A(X)]_c$ is conjugate-closed meaning that if $f+ig\in [A(X)]_c$ with $f,g\in A(X)$, then $f-ig\in [A(X)]_c$.
The following result tells that intermediate rings in the family $\Sigma (X,\mathbb{C})$ are the extensions of intermediate rings in $\Sigma (X)$.

\begin{theorem}\label{thm:conjugateclosedcrit}
A ring $P(X,\mathbb{C})$ of complex valued continuous functions on $X$ is a member of $\Sigma (X,\mathbb{C})$ if and only if there exists a ring $A(X)$ in the family $\Sigma (X)$ such that $P(X,\mathbb{C})=[A(X)]_c$.
\end{theorem}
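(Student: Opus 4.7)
The plan is to prove the two directions separately, and for the forward implication to define $A(X)$ as the obvious candidate, namely $A(X) = P(X,\mathbb{C}) \cap C(X)$.

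For the easy direction, I would assume $P(X,\mathbb{C}) = [A(X)]_c$ for some $A(X) \in \Sigma(X)$. Then $[A(X)]_c$ is clearly a subring of $C(X,\mathbb{C})$ (closed under sums, products, containing $0,1$). The inclusions $C^*(X,\mathbb{C}) \subseteq [A(X)]_c \subseteq C(X,\mathbb{C})$ follow from $C^*(X) \subseteq A(X) \subseteq C(X)$ together with the identifications $[C^*(X)]_c = C^*(X,\mathbb{C})$ and $[C(X)]_c = C(X,\mathbb{C})$.

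For the forward direction, suppose $P(X,\mathbb{C}) \in \Sigma(X,\mathbb{C})$ and set $A(X) = P(X,\mathbb{C}) \cap C(X)$. I first verify $A(X) \in \Sigma(X)$: it is a subring of $C(X)$ as an intersection of two rings, and since $C^*(X) \subseteq C^*(X,\mathbb{C}) \subseteq P(X,\mathbb{C})$ while $C^*(X) \subseteq C(X)$, we get $C^*(X) \subseteq A(X) \subseteq C(X)$. Next I show $P(X,\mathbb{C}) = [A(X)]_c$. The inclusion $[A(X)]_c \subseteq P(X,\mathbb{C})$ is immediate: if $f,g \in A(X) \subseteq P(X,\mathbb{C})$, then since the constant function $i$ lies in $C^*(X,\mathbb{C}) \subseteq P(X,\mathbb{C})$, we have $f + ig \in P(X,\mathbb{C})$.

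The heart of the argument is the reverse inclusion $P(X,\mathbb{C}) \subseteq [A(X)]_c$. Take $h \in P(X,\mathbb{C})$ and write $h = f + ig$ with $f, g \in C(X)$ its real and imaginary parts. To conclude $h \in [A(X)]_c$, I must show $f, g \in A(X)$, i.e., $f, g \in P(X,\mathbb{C})$. This is precisely where I invoke the conjugate-closedness of $P(X,\mathbb{C})$ stated in the paragraph preceding the theorem (itself a consequence of absolute convexity): it gives $\bar h = f - ig \in P(X,\mathbb{C})$, whence $f = \tfrac{1}{2}(h + \bar h)$ and $g = \tfrac{1}{2i}(h - \bar h)$ both lie in $P(X,\mathbb{C})$, and being real-valued also in $C(X)$, hence in $A(X)$. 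The only potential obstacle is the use of conjugate-closedness, but since it has already been established in the preamble, the argument is essentially bookkeeping once that fact is in hand.
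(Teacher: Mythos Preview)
Your proof is correct and follows essentially the same approach as the paper: define $A(X)=P(X,\mathbb{C})\cap C(X)$, note the easy inclusion $[A(X)]_c\subseteq P(X,\mathbb{C})$, and for the reverse inclusion use conjugate-closedness of $P(X,\mathbb{C})$ to recover $f=\tfrac{1}{2}(h+\bar h)$ and $g=\tfrac{1}{2i}(h-\bar h)$ as elements of $A(X)$. The paper's write-up is nearly identical, only phrasing the last step as first obtaining $2f,2ig\in P(X,\mathbb{C})$ and then multiplying by the bounded constants $\tfrac12,\tfrac{1}{2i}$.
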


\begin{proof}
Assume that $P(X,\mathbb{C})\in \Sigma (X,\mathbb{C})$ and let $A(X)=P(X,\mathbb{C})\cap C(X)$.
Then it is clear that $A(X)\in \Sigma (X)$ and $[A(X)]_c\subseteq P(X,\mathbb{C})$.

To prove the reverse containment, let $f+ig\in P(X,\C)$.
Here $f,g\in C(X)$.
Since $P(X,\C)$ is conjugate closed, $f-ig\in P(X,\C)$, and hence $2f$ and $2ig$ both belong to $P(X,\C)$.
Since constant functions are bounded and hence in $P(X,\C)$,
both the constant functions $\frac{1}{2}$ and $\frac{1}{2i}$ are in $P(X,\C)$.
It follows that both $f$ and $g$ are in $P(X,\C)\cap C(X)$, and hence in $A(X)$.
Consequently, $f+ig\in [A(X)]_c$.
Thus, $P(X,\C)\subseteq [A(X)]_c$.
\end{proof}

The following facts involving convex sets will be useful.  A subset $S$ of $C(X)$ is called \emph{absolutely convex} if whenever $\lvert f\rvert\leq \lvert g\rvert$ with $g\in S$ and $f\in C(X)$, then $f\in S$.

\begin{theorem}\label{rw1997result}
Let $A(X)\in \Sigma (X)$. Then

\begin{enumerate}
\renewcommand{\theenumi}{(\alph{enumi})}
\renewcommand{\labelenumi}{\theenumi}

\item \label{rw1997resultItema} $A(X)$ is an absolutely convex subring of $C(X)$ (in the sense that if $\lvert f\rvert\leq \lvert g\rvert$ with $g\in A(X)$ and $f\in C(X)$, then $f\in A(X)$) (\cite[Proposition 3.3]{DO}).

\item \label{rw1997resultpartb} A prime ideal $P$ in $A(X)$ is an absolutely convex subset of $A(X)$ (\cite[Theorem 2.5]{BW}).
\end{enumerate}
\end{theorem}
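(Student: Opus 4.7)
The plan is to recover both parts using the hypothesis that $C^{*}(X)\subseteq A(X)$; since each clause is quoted from the literature, I only outline the main ideas.

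For part (a), I would exhibit $f$ as a sum of functions already known to lie in $A(X)$. First, $g\in A(X)$ implies $1+g^{2}\in A(X)$ and $1+g^{2}\ge 1$, so division by $1+g^{2}$ never introduces singularities. Next, I would exploit the algebraic identity
\[
f \;=\; \frac{f}{1+g^{2}} \;+\; g\cdot\frac{fg}{1+g^{2}}.
\]
From the hypothesis $|f|\le|g|$ together with the elementary inequalities $|g|/(1+g^{2})\le\tfrac{1}{2}$ and $g^{2}/(1+g^{2})\le 1$, both $f/(1+g^{2})$ and $fg/(1+g^{2})$ are bounded continuous functions on $X$, hence both lie in $C^{*}(X)\subseteq A(X)$. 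Multiplying the second summand by $g\in A(X)$ and adding then yields $f\in A(X)$.

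For part (b), suppose $P$ is a prime ideal of $A(X)$, $g\in P$, $f\in A(X)$, and $|f|\le|g|$. I would produce a power of $f$ lying in the principal ideal $(g)\subseteq P$ and then invoke primeness. The natural candidate is $f^{2}$: define $h$ on $X\setminus Z(g)$ by $h:=f^{2}/g$ and extend by $h:=0$ on $Z(g)$. The pointwise bound $|f|^{2}\le g^{2}$ gives $|h|\le|g|$ throughout $X$, and this same bound forces $h(x)\to 0$ as $x$ approaches any point of $Z(g)$, so $h\in C(X)$. Applying part (a) with $h$ in place of $f$ puts $h\in A(X)$; hence $f^{2}=gh\in P$, and primeness of $P$ finally yields $f\in P$.

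The main obstacle is the continuity of the quotient $h$ across the zero set $Z(g)$ in part (b): once this is secured, absolute convexity from part (a) does the rest, and primeness is only invoked at the very last step. This continuity check is the heart of the original argument in \cite[Theorem 2.5]{BW}.
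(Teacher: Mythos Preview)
The paper does not supply its own proof of this theorem; it simply quotes the two clauses from \cite[Proposition~3.3]{DO} and \cite[Theorem~2.5]{BW} respectively. Your reconstruction is correct and is essentially the classical argument from those sources: in part~(a) the decomposition $f=f/(1+g^{2})+g\cdot fg/(1+g^{2})$ exhibits $f$ as a ring combination of bounded functions and $g$, while in part~(b) the quotient $h=f^{2}/g$ (extended by zero on $Z(g)$) is continuous because $|h|\le|g|$, lands in $A(X)$ by part~(a), and then $f^{2}=gh\in P$ forces $f\in P$ by primeness. There is nothing to correct.
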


The following convenient formula for $[A(X)]_c$ with $A(X)\in \Sigma (X)$ will often be helpful to us.

\begin{theorem}
For any $A(X)\in \Sigma (X)$, $[A(X)]_c=\{h\in C(X,\mathbb{C}): \lvert h\rvert\in A(X)\}$.
\end{theorem}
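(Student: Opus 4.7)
The plan is to prove the two inclusions separately, with Theorem~\ref{rw1997result}\ref{rw1997resultItema} (absolute convexity of $A(X)$) doing the heavy lifting in both directions.

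For the inclusion $[A(X)]_c \subseteq \{h \in C(X,\mathbb{C}) : |h| \in A(X)\}$, I would start with an arbitrary $h = f + ig \in [A(X)]_c$, where $f, g \in A(X)$. Since $A(X)$ is a ring containing the constants, $1 + f^2 + g^2 \in A(X)$. Now observe the pointwise inequality $|h| \leq 1 + |h|^2 = 1 + f^2 + g^2$ (which follows from $(|h|-1)^2 \geq 0$, i.e.\ $2|h| \leq 1 + |h|^2$). Applying absolute convexity of $A(X)$ with the dominating function $1 + f^2 + g^2 \in A(X)$ then yields $|h| \in A(X)$.

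For the reverse inclusion, I would take $h \in C(X,\mathbb{C})$ with $|h| \in A(X)$ and decompose $h = f + ig$ using its real and imaginary parts $f, g \in C(X)$. The pointwise inequalities $|f| \leq |h|$ and $|g| \leq |h|$ are immediate, so absolute convexity of $A(X)$ forces both $f$ and $g$ into $A(X)$. Hence $h \in [A(X)]_c$ by definition.

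I do not expect any serious obstacle here: the only non-mechanical step is choosing the right dominating function in the first direction, and the trick $|h| \leq 1 + |h|^2$ (or equivalently $|h| \leq \tfrac{1}{2}(1 + |h|^2)$) is the standard device for passing from $|h|^2 \in A(X)$ to $|h| \in A(X)$ in an arbitrary intermediate ring, since one cannot take square roots within $A(X)$ directly without such an estimate. Everything else is immediate from the definitions and Theorem~\ref{rw1997result}\ref{rw1997resultItema}.
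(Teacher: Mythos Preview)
Your proposal is correct and follows essentially the same two-inclusion strategy as the paper, with absolute convexity of $A(X)$ (Theorem~\ref{rw1997result}\ref{rw1997resultItema}) doing the work in both directions. The only difference is the choice of dominating function in the forward inclusion: the paper uses the triangle inequality $|h|\le |f|+|g|$ (which tacitly requires the extra step $|f|,|g|\in A(X)$ via absolute convexity before adding), whereas your bound $|h|\le 1+f^2+g^2$ lands directly in $A(X)$ from ring operations and constants---a slightly cleaner route to the same conclusion.
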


\begin{proof}
First assume that $h=f+ig\in [A(X)]_c$ with $f,g\in A(X)$. Then $\lvert h\rvert\leq \lvert f\rvert +\lvert g\rvert$.
This implies, in view of Theorem~\ref{rw1997result}\ref{rw1997resultItema}, that $h\in A(X)$ and also $\lvert h\rvert \in A(X)$. Conversely, let $h=f+ig\in C(X,\mathbb{C})$ with $f,g\in C(X)$, be such that $\lvert h\rvert\in A(X)$. This means that $(f^2 +g^2)^{\frac{1}{2}}\in A(X)$. Since $\lvert f\rvert\leq (f^2+g^2)^{\frac{1}{2}}$, this implies in view of Theorem~\ref{rw1997result}\ref{rw1997resultItema} that $f\in A(X)$. Analogously $g\in A(X)$. Thus $h\in [A(X)]_c$.
\end{proof}

The proof of the following proposition is routine.

\begin{theorem}\label{thm:IdealExtensionTightness}
If $I$ is an ideal in $A(X)\in \Sigma (X)$, then $I_c$ is the smallest ideal in $[A(X)]_c$ containing $I$.
Furthermore $I_c\cap A(X)=I=I_c\cap C(X)$.
\end{theorem}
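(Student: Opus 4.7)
The plan is to first pin down the presumed definition $I_c=\{f+ig:f,g\in I\}$ (the analogue of $[A(X)]_c$) and then verify the two assertions by direct calculation.

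For the ideal property of $I_c$ inside $[A(X)]_c$, I would first observe that closure under subtraction is immediate from the uniqueness of real and imaginary parts: $(f_1+ig_1)-(f_2+ig_2)=(f_1-f_2)+i(g_1-g_2)$ lies in $I_c$ because $I$ is closed under subtraction. For absorption, given $f+ig\in I_c$ and $h+ik\in[A(X)]_c$, I would expand
\[
(f+ig)(h+ik)=(fh-gk)+i(fk+gh)
\]
and note that since $f,g\in I$, $h,k\in A(X)$, and $I$ is an ideal of $A(X)$, each of $fh,gk,fk,gh$ belongs to $I$; hence so do the real and imaginary parts, and the product is in $I_c$.

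For containment and minimality, the inclusion $I\subseteq I_c$ is given by $f=f+i\cdot 0$, using $0\in I$. For minimality, I would note that since the constant function $1$ (and hence $i$) is bounded and thus lies in $[A(X)]_c$, any ideal $J$ of $[A(X)]_c$ containing $I$ must contain $ig$ whenever $g\in I$, and therefore $f+ig$ for all $f,g\in I$; that is, $J\supseteq I_c$.

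Finally, for the equalities $I_c\cap A(X)=I=I_c\cap C(X)$, I would argue from the uniqueness of the real/imaginary decomposition: if $f+ig\in I_c$ is real-valued (in particular if it lies in $A(X)$ or in $C(X)$), then the imaginary part must vanish identically, so the element equals $f\in I$; conversely $I\subseteq I_c$ and $I\subseteq A(X)\subseteq C(X)$ give the reverse inclusions. No real obstacle is expected here; the only mild care needed is to keep the algebra of $(f+ig)(h+ik)$ straight and to invoke uniqueness of real and imaginary parts when intersecting with $C(X)$.
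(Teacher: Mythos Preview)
Your proposal is correct and is precisely the routine verification the paper has in mind; indeed, the paper does not supply a proof at all, merely remarking that it ``is routine.'' Your identification of $I_c=\{f+ig:f,g\in I\}$, the check of closure under subtraction and absorption via $(f+ig)(h+ik)=(fh-gk)+i(fk+gh)$, minimality using $i\in[A(X)]_c$, and the intersection equalities via uniqueness of real and imaginary parts are exactly the intended steps.
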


The following result is a straight forward consequence of Theorem~\ref{thm:IdealExtensionTightness}.

\begin{theorem}\label{thm:idealorderandstrictness}
If $I$ and $J$ are ideals in $A(X)\in \Sigma (X)$, then $I\subseteq J$ if and only if $I_c\subseteq J_c$. Also $I\subsetneq J$ when and only when $I_c\subsetneq J_c$.
\end{theorem}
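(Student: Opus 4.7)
The plan is to deduce both equivalences directly from Theorem~\ref{thm:IdealExtensionTightness}, which supplies the two ingredients I need: the minimality of $I_c$ among ideals of $[A(X)]_c$ that contain $I$, and the contraction identity $I_c\cap A(X)=I$.

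For the forward direction of the first equivalence, I would argue that if $I\subseteq J$ then $J_c$ is an ideal of $[A(X)]_c$ containing $I$, so by the minimality clause of Theorem~\ref{thm:IdealExtensionTightness} we get $I_c\subseteq J_c$. For the reverse direction, if $I_c\subseteq J_c$ then intersecting both sides with $A(X)$ and applying $I_c\cap A(X)=I$ and $J_c\cap A(X)=J$ yields $I\subseteq J$.

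For the strict version, I would combine the non-strict equivalence with the same contraction identity. If $I\subsetneq J$, then $I_c\subseteq J_c$ by the first part; equality $I_c=J_c$ would force $I=I_c\cap A(X)=J_c\cap A(X)=J$, contradicting strictness, so $I_c\subsetneq J_c$. Conversely, if $I_c\subsetneq J_c$, then $I\subseteq J$ by the first part; equality $I=J$ would give $I_c=J_c$ (the extension $(\,\cdot\,)_c$ is a function of the underlying ideal), again contradicting strictness.

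No step here is a real obstacle: everything is a two-line bookkeeping argument once the two bullet points of Theorem~\ref{thm:IdealExtensionTightness} are in hand. The only point that deserves a moment of care is the implicit claim that $I\mapsto I_c$ is well-defined (so that $I=J$ really does force $I_c=J_c$), but this is immediate from the explicit description $I_c=\{f+ig:f,g\in I\}$ used in Theorem~\ref{thm:IdealExtensionTightness}.
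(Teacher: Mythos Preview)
Your argument is correct and matches the paper's approach: the paper simply states that this theorem is a straightforward consequence of Theorem~\ref{thm:IdealExtensionTightness}, which is exactly what you have spelled out in detail. There is nothing to add.
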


We have the following convenient formula for $I_c$ when $I$ is an absolutely convex ideal of $A(X)$.

\begin{theorem}\label{thm:absconv}
If $I$ is an absolutely convex ideal of $A(X)$ (in particular if $I$ is a prime ideal or a maximal ideal of $A(X)$), then $I_c=\{h\in [A(X)]_c: \lvert h\rvert\in I\}$.
\end{theorem}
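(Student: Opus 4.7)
The plan is to prove the two set-containments separately, each time invoking the absolute convexity of $I$ in a different direction. Recall that, for an ideal $I$ of $A(X)$, $I_c = \{f+ig : f,g \in I\}$ is the smallest ideal of $[A(X)]_c$ containing $I$, by Theorem~\ref{thm:IdealExtensionTightness}.

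For the containment $I_c \subseteq \{h \in [A(X)]_c : \lvert h\rvert \in I\}$, I would start with a typical element $h = f+ig$, $f,g \in I$. Since $I \subseteq A(X)$, membership $h \in [A(X)]_c$ is automatic. To show $\lvert h\rvert \in I$, I first apply absolute convexity of $I$ to the trivial inequality $\lvert f\rvert \leq \lvert f\rvert$ with $f \in I$ to conclude $\lvert f\rvert \in I$; similarly $\lvert g\rvert \in I$, and therefore $\lvert f\rvert + \lvert g\rvert \in I$ since $I$ is closed under addition. The pointwise bound $\lvert h\rvert = \sqrt{f^{2}+g^{2}} \leq \lvert f\rvert + \lvert g\rvert$, with $\lvert h\rvert \in C(X)$, then gives $\lvert h\rvert \in I$ by a second appeal to absolute convexity.

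For the reverse containment, take $h \in [A(X)]_c$ with $\lvert h\rvert \in I$, and write $h = f+ig$ with $f,g \in A(X) \subseteq C(X)$ (possible by definition of the extension). The pointwise inequalities $\lvert f\rvert \leq \lvert h\rvert$ and $\lvert g\rvert \leq \lvert h\rvert$, together with $\lvert h\rvert \in I$ and absolute convexity of $I$, force $f,g \in I$, whence $h \in I_c$. The parenthetical claim about prime and maximal ideals is immediate from Theorem~\ref{rw1997result}\ref{rw1997resultpartb}, since every maximal ideal of a commutative ring with unity is prime and prime ideals of $A(X)$ are absolutely convex.

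No step is a serious obstacle: the whole argument is the triangle-style bound $\lvert f+ig\rvert \leq \lvert f\rvert + \lvert g\rvert$ routed twice through the absolutely convex set $I$, which is exactly the mechanism already used in the preceding formula theorem for $[A(X)]_c$ itself.
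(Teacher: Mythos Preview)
Your proof is correct and follows essentially the same route as the paper's own argument: both directions hinge on the inequalities $\lvert h\rvert \leq \lvert f\rvert + \lvert g\rvert$ and $\lvert f\rvert,\lvert g\rvert \leq \lvert h\rvert$ together with absolute convexity of $I$. The only difference is cosmetic: the paper writes the forward direction in one line (``since $\lvert h\rvert\leq\lvert f\rvert+\lvert g\rvert$, absolute convexity gives $\lvert h\rvert\in I$''), tacitly using that $\lvert f\rvert+\lvert g\rvert\in I$, whereas you explicitly justify this by first deducing $\lvert f\rvert,\lvert g\rvert\in I$ and then adding.
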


\begin{proof}
Let $h=f+ig\in I_c$.
Then $f,g\in I$.
Since $\lvert h\rvert\leq \lvert f\rvert +\lvert g\rvert$, the absolute convexity of $I$ implies that $\lvert h\rvert\in I$.
Conversely, let $h=f+ig\in [A(X)]_c$ be such that $\lvert h\rvert\in I$.
Here $f,g\in A(X)$.
Since $\lvert f\rvert\leq (f^2+g^2)^{\frac{1}{2}}=\lvert h\rvert$, it follows from the absolute convexity of $I$ that $f\in I$. Analogously $g\in I$. Hence $h\in I_c$.
\end{proof}

The above theorem prompts us to define the notion of an absolutely convex ideal in $P(X,\mathbb{C})\in \Sigma (X,\mathbb{C})$ as follows:

\begin{definition}
An ideal $J$ in $P(X,\mathbb{C})$ in $\Sigma (X,\mathbb{C})$ is called absolutely convex if for $g,h$ in $C(X,\mathbb{C})$ with $\lvert g\rvert\leq \lvert  h\rvert$ and $h\in J$, it follows that $g\in J$.
\end{definition}

The first part of the following proposition is immediate, while the second part follows from Theorem~\ref{thm:conjugateclosedcrit} and Theorem~\ref{thm:absconv}.

\begin{theorem}\label{thm:convexextrest}
Let $P(X,\mathbb{C})\in \Sigma (X,\mathbb{C})$.

\begin{enumerate}
\renewcommand{\theenumi}{(\roman{enumi})}
\renewcommand{\labelenumi}{\theenumi}
\item If $J$ is an absolutely convex ideal of $P(X,\mathbb{C})$, then $J\cap C(X)$ is an absolutely convex ideal of the intermediate ring $P(X,\mathbb{C})\cap C(X)\in \Sigma (X)$.

\item \label{thm:convexextrest:item2} An ideal $I$ in $P(X,\mathbb{C})\cap C(X)$ is absolutely convex in this ring if and only if $I_c$ is an absolutely convex ideal of $P(X,\mathbb{C})$.

\item \label{thm:convexextrest:item3} If $J$ is an absolutely convex ideal of $P(X,\mathbb{C})$, then $J=[J\cap C(X)]_c$.

\end{enumerate}
\end{theorem}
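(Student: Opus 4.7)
The plan is to exploit Theorem~\ref{thm:conjugateclosedcrit} to write $P(X,\mathbb{C})=[A(X)]_c$ where $A(X):=P(X,\mathbb{C})\cap C(X)\in\Sigma(X)$, and then reduce each assertion to statements about $A(X)$ and its absolutely convex ideals, using the explicit formula from Theorem~\ref{thm:absconv}.

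For part (i), I would first observe that $J\cap C(X)$ is an ideal of $A(X)$ by elementary ring theory. To verify the convexity condition within the real-valued intermediate ring, I would take $f\in A(X)$ and $g\in J\cap C(X)$ with $|f|\leq|g|$; viewing $f,g$ as elements of $C(X,\mathbb{C})$, the absolute convexity of $J$ in $P(X,\mathbb{C})$ immediately yields $f\in J$, hence $f\in J\cap C(X)$.

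For part (ii), the forward implication is where Theorem~\ref{thm:absconv} does the real work. Assuming $I$ is absolutely convex in $A(X)$, the formula $I_c=\{h\in[A(X)]_c:|h|\in I\}$ becomes available. Given $g,h\in C(X,\mathbb{C})$ with $|g|\leq|h|$ and $h\in I_c$, I have $|h|\in I\subseteq A(X)$; the absolute convexity of $A(X)$ itself (Theorem~\ref{rw1997result}\ref{rw1997resultItema}) pushes $|g|$ into $A(X)$, while the absolute convexity of $I$ pushes $|g|$ into $I$, and by the formula this places $g\in I_c$. The reverse implication is direct: if $I_c$ is absolutely convex and $|f|\leq|g|$ with $g\in I$ and $f\in A(X)$, then $g\in I_c$, hence $f\in I_c$, hence $f\in I_c\cap C(X)=I$ by Theorem~\ref{thm:IdealExtensionTightness}.

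For part (iii), setting $I:=J\cap C(X)$, part (i) shows $I$ is absolutely convex in $A(X)$, and the inclusion $I_c\subseteq J$ is immediate from the minimality in Theorem~\ref{thm:IdealExtensionTightness}. For the reverse inclusion, I would write a generic $h=f+ig\in J$ with $f,g\in C(X)$; since $|f|\leq|h|$ and $|g|\leq|h|$, absolute convexity of $J$ forces both $f$ and $g$ into $J\cap C(X)=I$, whence $h\in I_c$. I do not anticipate a genuine obstacle here; the slightly delicate point is precisely this last decomposition in (iii), where one must conclude separately that the real and imaginary parts lie in $I$, and this is exactly what absolute convexity of $J$ is designed to supply.
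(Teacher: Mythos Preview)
Your proposal is correct and follows essentially the same route as the paper. The paper declares (i) immediate and (ii) a consequence of Theorems~\ref{thm:conjugateclosedcrit} and~\ref{thm:absconv}, which is exactly what you unpack; for (iii) the paper does the same real/imaginary-part decomposition, though it takes the slightly longer detour of first showing $|h|\in J\cap C(X)$ and $f\in P(X,\mathbb{C})$ before concluding $f\in J$, whereas you apply the absolute convexity of $J$ directly to $|f|\leq|h|$ --- both reach the same conclusion by the same mechanism.
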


\begin{proof}
\ref{thm:convexextrest:item3} It is trivial that $[J\cap C(X)]_c\subseteq J$. To prove the reverse implication relation let $h=f+ig\in J$, with $f,g\in C(X)$. The absolute convexity of $J$ implies that $\lvert h\rvert\in J$. Consequently $\lvert h\rvert\in J\cap C(X)$. But since $\lvert f\rvert\leq (f^2 +g^2)^{\frac{1}{2}}=\lvert h \rvert$, it follows again due to the absolute convexity of $P(X,\mathbb{C})$ as a subring of $C(X,\mathbb{C})$ that $f\in P(X,\mathbb{C})$. We further use absolute convexity of $J$ in $P(X,\mathbb{C})$ to assert that $f\in J$. Analogously $g\in J$. Thus $h=f+ig\in [J\cap C(X)]_c$. Therefore $J\subseteq [J\cap C(X)]_c$.
\end{proof}

\begin{remark}
For any $P(X,\mathbb{C})\in \Sigma (X,\mathbb{C})$, the assignment $I\mapsto I_c$ provides a one-to-one correspondence between the absolute convex ideals of $P(X,\mathbb{C})\cap C(X)$ and those of $P(X,\mathbb{C})$.
\end{remark}

The following theorem entails a one-to-one correspondence between the prime ideals of $P(X,\mathbb{C})$ and those of $P(X,\mathbb{C})\cap C(X)$.

\begin{theorem}\label{thm:primeidealconditionbf}
Let $P(X,\mathbb{C})$ be member of $\Sigma (X,\mathbb{C})$.
An ideal $J$ of $P(X,\mathbb{C})$ is prime if and only if there exists a prime ideal $Q$ in $P(X,\mathbb{C})\cap C(X)$ such that $J=Q_c$.
\end{theorem}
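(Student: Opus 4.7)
The plan is to prove both directions by exploiting the correspondence $Q\mapsto Q_c$ between ideals of the real intermediate ring $P(X,\mathbb C)\cap C(X)\in\Sigma(X)$ and their complex extensions. For the forward direction, given a prime ideal $J$ of $P(X,\mathbb C)$, I would set $Q:=J\cap C(X)$ and first check that $Q$ is a prime ideal of $P(X,\mathbb C)\cap C(X)$. Properness is immediate since $1\notin J$, and if $f,g\in P(X,\mathbb C)\cap C(X)$ satisfy $fg\in Q\subseteq J$, then the primality of $J$ gives $f\in J$ or $g\in J$, hence $f\in Q$ or $g\in Q$.

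The substantive step is to verify $J=Q_c$. The inclusion $Q_c\subseteq J$ is clear. For the reverse, I would take $h=f+ig\in J$; by Theorem~\ref{thm:conjugateclosedcrit} one has $f,g\in P(X,\mathbb C)\cap C(X)$, and the conjugate-closedness of $P(X,\mathbb C)$ gives $\bar h=f-ig\in P(X,\mathbb C)$, so $f^2+g^2=h\bar h\in J$ and therefore $f^2+g^2\in Q$. By Theorem~\ref{rw1997result}\ref{rw1997resultpartb} the prime ideal $Q$ is absolutely convex in $P(X,\mathbb C)\cap C(X)$; since $|f^2|\leq |f^2+g^2|$, this forces $f^2\in Q$, and the primality of $Q$ then yields $f\in Q$. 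Symmetrically $g\in Q$, so $h\in Q_c$.

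For the converse, suppose $J=Q_c$ with $Q$ prime in $P(X,\mathbb C)\cap C(X)$. Properness of $J$ follows from Theorem~\ref{thm:IdealExtensionTightness}. To see $J$ is prime, I would suppose $h_1h_2\in J$ with $h_1=a+ib$ and $h_2=f+ig$, expand to get $h_1h_2=(af-bg)+i(ag+bf)\in Q_c$, and note that both $af-bg$ and $ag+bf$ must lie in $Q$; the identity
\[
  (af-bg)^2+(ag+bf)^2=(a^2+b^2)(f^2+g^2)
\]
then places $(a^2+b^2)(f^2+g^2)$ in $Q$. Primality of $Q$ gives $a^2+b^2\in Q$ or $f^2+g^2\in Q$, and in either case the absolute convexity plus primality argument of the previous paragraph delivers $h_1\in J$ or $h_2\in J$. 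The main technical point, appearing in both directions, is precisely this descent from $a^2+b^2\in Q$ to $a,b\in Q$, enabled by the combination of primality and absolute convexity of $Q$ supplied by Theorem~\ref{rw1997result}\ref{rw1997resultpartb}.
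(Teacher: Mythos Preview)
Your proof is correct. The forward direction is essentially identical to the paper's: set $Q=J\cap C(X)$, note it is prime, and for $h=f+ig\in J$ use conjugate-closedness to get $f^2+g^2=h\bar h\in Q$, then invoke absolute convexity of the prime ideal $Q$ (Theorem~\ref{rw1997result}\ref{rw1997resultpartb}) to pull out $f$ and $g$.

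For the converse you take a slightly more hands-on route than the paper. The paper simply invokes Theorem~\ref{thm:absconv} to write $Q_c=\{h\in P(X,\mathbb C):|h|\in Q\}$, after which primality of $Q_c$ is immediate from the multiplicativity $|h_1h_2|=|h_1|\,|h_2|$. Your argument instead expands $h_1h_2$ into real and imaginary parts and uses the algebraic identity $(af-bg)^2+(ag+bf)^2=(a^2+b^2)(f^2+g^2)$ to reduce to the same absolute-convexity-plus-primality step. Both arguments rest on the same underlying idea; the paper's version is a bit slicker because it packages the computation into the modulus formula for $Q_c$, whereas yours has the virtue of being self-contained and not requiring the separate lemma.
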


\begin{proof}
Let $J$ be a prime ideal in $P(X,\mathbb{C})$ and let $Q=J\cap C(X)$ and $A(X) = P(X,\mathbb{C})\cap C(X)$.
Then $Q$ is a prime ideal in the ring $A(X)$.
It is easy to see that $Q_c\subseteq J$.
To prove the reverse containment, let $h=f+ig\in J$, where $f,g\in P(X,\mathbb{C})$.
Note that $P(X,\mathbb{C})=[A(X)]_c$ by Theorem~\ref{thm:conjugateclosedcrit}.
Hence $f,g\in A(X)$ and therefore $f-ig\in P(X,\mathbb{C})$.
As $J$ is an ideal of $P(X,\mathbb{C})$, it follows that $(f+ig)(f-ig)\in J$ i.e, $f^2+g^2\in J\cap C(X)=Q$. Since $Q$ is a prime ideal in $A(X)$, we can apply Theorem~\ref{rw1997result}\ref{rw1997resultpartb}, yielding $f^2\in Q$ and hence $f\in Q$.
Analogously $g\in Q$. Thus $h\in Q_c$. Therefore $J\subseteq Q_c$.

To prove the converse of this theorem, let $Q$ be a prime ideal in $A(X)$.
It follows from Theorem~\ref{thm:absconv} that $Q_c =\{h\in P(X,\mathbb{C}): \lvert h\rvert\in Q\}$ and therefore $Q_c$ is a prime ideal in $P(X,\mathbb{C})$.
\end{proof}

\begin{remark}\label{rmk:allprimeidealsinPXC}
For any $P(X,\mathbb{C})\in \Sigma (X,\mathbb{C})$, the collection of all prime ideals in $P(X,\mathbb{C})$ is precisely $\{Q_c :Q$ is a prime ideal in $P(X,\mathbb{C})\cap C(X)\}$.
\end{remark}

\begin{remark}\label{rmk:allminprimeidealsinPXC}
The collection of all minimal prime ideals in $P(X,\mathbb{C})$ is precisely $\{Q_c: Q$ is a minimal prime ideal in $P(X,\,\mathbb{C})\cap C(X)\}$. [This follows from Remark~\ref{rmk:allprimeidealsinPXC} and Theorem~\ref{thm:idealorderandstrictness}].
\end{remark}

\begin{theorem}\label{thm:allmaxidealsinPXC}
For any $P(X,\mathbb{C})\in \Sigma (X,\mathbb{C})$,
the collection of all maximal ideals in $P(X,\mathbb{C})$ is $\{ M_c: M$ is a maximal ideal of $P(X,\mathbb{C})\cap C(X)\}$.
\end{theorem}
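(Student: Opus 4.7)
The plan is to establish the two containments separately, using the prime-ideal correspondence of Theorem~\ref{thm:primeidealconditionbf} and the order-reflection of Theorem~\ref{thm:idealorderandstrictness}. Throughout, write $A(X) = P(X,\C)\cap C(X)$, which lies in $\Sigma(X)$.

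First I would handle the forward direction: if $M$ is a maximal ideal of $A(X)$, then $M_c$ is a maximal ideal of $P(X,\C)$. Note that $M_c$ is proper, since by Theorem~\ref{thm:IdealExtensionTightness} we have $M_c\cap C(X)=M$, which is proper in $A(X)$; in particular $1\notin M_c$. Now let $J'$ be any maximal ideal of $P(X,\C)$ containing $M_c$. Since $J'$ is in particular prime, Theorem~\ref{thm:primeidealconditionbf} gives a prime ideal $Q$ of $A(X)$ with $J'=Q_c$. Then $M_c\subseteq Q_c$ yields $M\subseteq Q$ by Theorem~\ref{thm:idealorderandstrictness}, and $Q=Q_c\cap C(X)=J'\cap C(X)$ is a proper ideal of $A(X)$ because $J'$ is proper. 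Maximality of $M$ then forces $M=Q$, so $M_c=J'$, showing $M_c$ itself is maximal.

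For the reverse direction, take any maximal ideal $J$ of $P(X,\C)$. Since $J$ is prime, Theorem~\ref{thm:primeidealconditionbf} produces a prime ideal $Q$ of $A(X)$ with $J=Q_c$. The ideal $Q$ is proper (again by $Q=J\cap C(X)$ from Theorem~\ref{thm:IdealExtensionTightness}), so it is contained in some maximal ideal $M$ of $A(X)$. The forward direction just proved shows $M_c$ is a maximal, hence proper, ideal of $P(X,\C)$, and Theorem~\ref{thm:idealorderandstrictness} gives $J=Q_c\subseteq M_c$. Maximality of $J$ then forces $J=M_c$, completing the proof.

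The main obstacle is purely bookkeeping: one must keep track of properness as it is pushed back and forth between $A(X)$ and $P(X,\C)$ via the extension $I\mapsto I_c$ and the restriction $I_c\mapsto I_c\cap C(X)=I$. Once the prime-ideal correspondence is in hand, the maximality statement follows by the standard Zorn's-lemma ``embed in a maximal ideal'' trick on both sides, combined with order-reflection.
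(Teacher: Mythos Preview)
Your proof is correct and follows essentially the same approach as the paper: both directions rest on the prime-ideal correspondence (Theorem~\ref{thm:primeidealconditionbf}) together with the order-reflection of Theorem~\ref{thm:idealorderandstrictness}. The only cosmetic difference is that you argue by embedding in a maximal ideal and showing equality, whereas the paper phrases each direction as a proof by contradiction; the underlying logic is identical.
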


\begin{proof}
Let $M$ be a maximal ideal in $P(X,\mathbb{C})\cap C(X)=A(X)$. Then by Theorem~\ref{thm:primeidealconditionbf}, $M_c$ is a prime ideal in $P(X,\mathbb{C})$. Suppose that $M_c$ is not a maximal ideal in $P(X,\mathbb{C})$, then there exists a prime ideal $T$ in $P(X,\mathbb{C})$ such that $M_c\subsetneq T$. By remark 2.11, there exists a prime ideal $P$ in $A(X)$ such that $J=P_c$. So $M_c\subsetneq P_c$. This implies in view of Theorem 2.5 that $M\subsetneq P$, a contradiction to the maximality of $M$ in $A(X)$.

Conversely, let $J$ be a maximal ideal of $P(X,\mathbb{C})$.
In particular $J$ is a prime ideal in this ring.
By Remark~\ref{rmk:allprimeidealsinPXC}, $J=Q_c$ for some prime ideal $Q$ in $A(X)$. We claim that $Q$ is a maximal ideal in $A(X)$.
Suppose not; then $Q\subsetneq K$ for some proper ideal $K$ in $A(X)$.
Then by Theorem~\ref{thm:idealorderandstrictness}, $Q_c\subsetneq K_c$ and $K_c$ a proper ideal in $P(X,\mathbb{C})$; this contradicts the maximality of $J = Q_c$.
\end{proof}

We next prove analogoues of Remark~\ref{rmk:allprimeidealsinPXC} and Theorem~\ref{thm:allmaxidealsinPXC} for two important classes of ideals viz $z$-ideals and $z^\circ$-ideals in $P(X,\mathbb{C})\in \Sigma (X,\mathbb{C})$.
These ideals are defined as follows.

\begin{definition}
Let $R$ be a commutative ring with unity. For each $a\in R$, let $M_a$ (respectively $P_a$) stand for the intersection of all maximal ideals (respectively all minimal prime ideals) which contain $a$. An ideal $I$ in $R$ is called a $z$-ideal (respectively $z^\circ$-ideal) if for each $a\in I, M_a\subseteq I$ (respectively $P_a\subseteq I$).
\end{definition}
This notion of $z$-ideals is consistent with the notion of $z$-ideal in $C(X)$ (see \cite[4A5]{GJ}). Since each prime ideal in an intermediate ring $A(X)\in \Sigma (X)$ is absolutely convex (Theorem~\ref{rw1997result}\ref{rw1997resultpartb}), it follows from Theorem~\ref{thm:convexextrest}\ref{thm:convexextrest:item2} and Remark~\ref{rmk:allprimeidealsinPXC} that each prime ideal in  $P(X,\mathbb{C})\in \Sigma (X,\mathbb{C})$ is absolutely convex. In particular each maximal ideal is absolutely convex. Now if $I$ is a $z$-ideal in $P(X,\mathbb{C})\in \Sigma (X,\mathbb{C})$ and $\lvert f\rvert\leq \lvert g\rvert, g\in I, f\in P(X,\mathbb{C})$, then $M_g\subseteq I$. Let $M$ be a maximal ideal in $P(X,\mathbb{C})$ containing $g$. It follows due to the absolute convexity of $M$ that $f\in M$. Therefore $f\in M_g\subset I$. Thus each $z$-ideal in $P(X,\mathbb{C})$ is absolutely convex. Analogously it can be proved that each $z^\circ$-ideal in $P(X,\mathbb{C})$ is absolutely convex.

The following subsidiary result can be proved using routine arguments.

\begin{lemma}\label{thm:extensiondistributeintersection}
For any family $\{I_\alpha :\alpha\in \Lambda\}$ of ideals in an intermediate ring $A(X)\in \Sigma (X)$, $(\bigcap_{\alpha\in \Lambda } I_\alpha )_c=\bigcap_{\alpha\in\Lambda }(I_{\alpha })_c$.
\end{lemma}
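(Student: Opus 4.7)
The plan is to argue both inclusions directly from the definition $I_c = \{f+ig : f,g\in I\}$ applied to each side, relying on the fact that the real/imaginary decomposition of a complex-valued function is unique.

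For the forward inclusion $(\bigcap_{\alpha} I_\alpha)_c \subseteq \bigcap_{\alpha}(I_\alpha)_c$, I would take $h \in (\bigcap_\alpha I_\alpha)_c$ and write $h = f+ig$ with $f,g \in \bigcap_\alpha I_\alpha$. Then $f,g \in I_\alpha$ for every $\alpha$, so $h \in (I_\alpha)_c$ for every $\alpha$, placing $h$ in the intersection on the right. This direction is essentially formal and needs no additional input.

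The reverse inclusion is where one must be slightly careful. Given $h \in \bigcap_\alpha (I_\alpha)_c$, decompose $h = f + ig$ uniquely with $f,g \in C(X)$. For each fixed $\alpha$, membership $h \in (I_\alpha)_c$ gives an a priori representation $h = f_\alpha + i g_\alpha$ with $f_\alpha, g_\alpha \in I_\alpha$. Here is the only place where one actually uses anything beyond the definition: by uniqueness of the real and imaginary parts of $h$, we must have $f_\alpha = f$ and $g_\alpha = g$, so $f,g \in I_\alpha$. (Alternatively, one can invoke $(I_\alpha)_c \cap C(X) = I_\alpha$ from Theorem~\ref{thm:IdealExtensionTightness}, which is essentially the same observation packaged as a lemma.) Letting $\alpha$ vary yields $f,g \in \bigcap_\alpha I_\alpha$, whence $h = f+ig \in (\bigcap_\alpha I_\alpha)_c$.

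There is no real obstacle here; the argument is a one-line verification once one notes that the decomposition into real and imaginary parts is unique and that $I_\alpha \subseteq C(X)$ for every $\alpha$. The phrase \emph{routine arguments} used by the authors seems to refer precisely to this point.
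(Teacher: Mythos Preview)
Your proof is correct and is exactly the routine verification the paper alludes to; the authors give no proof beyond the phrase ``can be proved using routine arguments,'' and your use of the uniqueness of the real/imaginary decomposition (equivalently, $(I_\alpha)_c\cap C(X)=I_\alpha$ from Theorem~\ref{thm:IdealExtensionTightness}) is precisely the intended one-line check.
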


\begin{theorem}\label{thm:charzidealsinPXC}
An ideal $J$ in a ring $P(X,\mathbb{C})\in \Sigma (X,\mathbb{C})$ is a $z$-ideal in $P(X,\C)$ if and only if there exists a $z$-ideal $I$ in $P(X,\mathbb{C})\cap C(X)$ such that $J=I_c$.
\end{theorem}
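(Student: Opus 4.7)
The plan is to prove both directions by reducing the $z$-ideal condition in $P(X,\mathbb{C})$ to the parallel condition in $A(X):=P(X,\mathbb{C})\cap C(X)$ through the maximal-ideal correspondence of Theorem~\ref{thm:allmaxidealsinPXC}, combined with Lemma~\ref{thm:extensiondistributeintersection} to pass intersections through the $(\cdot)_c$ operation.

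For the forward direction, I would start with a $z$-ideal $J$ in $P(X,\mathbb{C})$. The discussion preceding Lemma~\ref{thm:extensiondistributeintersection} already shows that every such $J$ is absolutely convex, and Theorem~\ref{thm:convexextrest}\ref{thm:convexextrest:item3} then gives $J=(J\cap C(X))_c$. Setting $I=J\cap C(X)$, I would verify that $I$ is a $z$-ideal in $A(X)$. The central identity I need is
\[
M_a^{P(X,\mathbb{C})} = (M_a^{A(X)})_c \qquad \text{for } a\in A(X),
\]
which follows from Theorem~\ref{thm:allmaxidealsinPXC} (maximal ideals of $P(X,\mathbb{C})$ are exactly the $M_c$), from Theorem~\ref{thm:IdealExtensionTightness} (which gives $a\in M_c \iff a\in M$ when $a\in A(X)$), and from Lemma~\ref{thm:extensiondistributeintersection}. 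Then $M_a^{P(X,\mathbb{C})}\subseteq J$ (because $J$ is a $z$-ideal) can be intersected with $A(X)$ using Theorem~\ref{thm:IdealExtensionTightness} once more to conclude $M_a^{A(X)}\subseteq I$.

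For the converse, I would begin with a $z$-ideal $I$ of $A(X)$ and show $I_c$ is a $z$-ideal of $P(X,\mathbb{C})$. Since $I$ is absolutely convex (being a $z$-ideal), Theorem~\ref{thm:absconv} gives $I_c=\{k\in[A(X)]_c:\lvert k\rvert\in I\}$, so any $h\in I_c$ satisfies $\lvert h\rvert\in I$ and hence $M_{\lvert h\rvert}^{A(X)}\subseteq I$. The key step is then to identify the maximal ideals containing $h$: applying Theorem~\ref{thm:absconv} to each maximal $M$ of $A(X)$ shows $h\in M_c \iff \lvert h\rvert\in M$, so that
\[
M_h^{P(X,\mathbb{C})} \;=\; \bigcap_{\lvert h\rvert\in M} M_c \;=\; \Bigl(\,\bigcap_{\lvert h\rvert\in M} M\,\Bigr)_c \;=\; (M_{\lvert h\rvert}^{A(X)})_c \;\subseteq\; I_c,
\]
where the second equality is Lemma~\ref{thm:extensiondistributeintersection} and the final containment is Theorem~\ref{thm:idealorderandstrictness}.

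The only real subtlety is in the converse: the indexing set for the intersection defining $M_h^{P(X,\mathbb{C})}$ is not governed by $h$ being in $M$ (which does not even make sense as $M\subseteq A(X)$), but by $\lvert h\rvert$ lying in $M$. Getting this matching right via the absolute-convexity description of $M_c$ in Theorem~\ref{thm:absconv} is the main technical point; once that is in place, the rest is bookkeeping with the already-established correspondences.
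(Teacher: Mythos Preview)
Your proposal is correct and follows essentially the same route as the paper: both directions hinge on the absolute convexity of $z$-ideals, Theorem~\ref{thm:convexextrest}\ref{thm:convexextrest:item3} to write $J=(J\cap C(X))_c$, the maximal-ideal correspondence of Theorem~\ref{thm:allmaxidealsinPXC}, Lemma~\ref{thm:extensiondistributeintersection} to commute intersections with $(\cdot)_c$, and Theorem~\ref{thm:absconv} to turn membership of $h$ in $M_c$ into membership of $\lvert h\rvert$ in $M$. The only cosmetic difference is that you package the forward step as the single identity $M_a^{P(X,\mathbb{C})}=(M_a^{A(X)})_c$, whereas the paper unfolds this by explicitly naming the family $\{M_\alpha\}$ of maximal ideals of $A(X)$ containing $f$ and arguing that $\{(M_\alpha)_c\}$ is exactly the family of maximal ideals of $P(X,\mathbb{C})$ containing $f$.
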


\begin{proof}
First assume that $J$ is a $z$-ideal in $P(X,\mathbb{C})$.
Let $I=J\cap C(X)$.
Since $J$ is absolutely convex, it follows from Theorem~\ref{thm:convexextrest}\ref{thm:convexextrest:item3} that $J=I_c$. We show that $I$ is a $z$-ideal in $P(X,\mathbb{C})\cap C(X)$. Choose $f\in I$. Suppose $\{M_\alpha :\alpha\in \Lambda\}$ is the set of all maximal ideals in the ring $P(X,\mathbb{C})\cap C(X)$ which contain $f$. It follows from Theorem~\ref{thm:allmaxidealsinPXC} that $\{(M_\alpha )_c:\alpha\in \Lambda\}$ is the set of all maximal ideals in $P(X,\mathbb{C})$ containing $f$.
Since $f\in J$ and $J$ is a $z$-ideal in $P(X,\mathbb{C})$, it follows that $\bigcap_{\alpha\in\Lambda}(M_\alpha)_c\subseteq J$. This implies in the view of Lemma~\ref{thm:extensiondistributeintersection} that $(\bigcap_{\alpha\in \Lambda }M_\alpha )_c\cap C(X)\subseteq I$, and hence $f\in \bigcap_{\alpha\in \Lambda }M_\alpha \subseteq I$.
Thus it is proved that $I$ is a $z$-ideal in $P(X,\mathbb{C})\cap C(X)$.

Conversely, let $I$ be a $z$-ideal in the ring $P(X,\mathbb{C})\cap C(X)$.
We shall prove that $I_c$ is a $z$-ideal in $P(X,\mathbb{C})$.
We recall from Theorem~\ref{thm:conjugateclosedcrit} that $[P(X,\mathbb{C})\cap C(X)]_c=P(X,\mathbb{C})$. Choose $f$ from $I_c$. From Theorem~\ref{thm:absconv}, it follows that (taking care of the fact that each $z$-ideal in $P(X,\mathbb{C})$ is absolutely convex) $\lvert f\rvert\in I$. Let $\{N_\beta :\beta\in \Lambda^*\}$ be the set of all maximal ideals in $P(X,\mathbb{C})\cap C(X)$ which contain the function $\lvert f\rvert$. The hypothesis that $I$ is a $z$-ideal in $P(X,\mathbb{C})\cap C(X)$ therefore implies that $\bigcap_{\beta \in\Lambda^*}N_\beta\subseteq I$.
This further implies in view of Lemma~\ref{thm:extensiondistributeintersection} that $\bigcap_{\beta\in\Lambda^*}(N_\beta )_c\subseteq I_c$. Again it follows from Theorem~\ref{thm:absconv} that, for any maximal ideal $M$ in $P(X,\mathbb{C})\cap C(X)$ and any $g\in P(X,\mathbb{C})$, $g\in M_c$ if and only if $\lvert g\rvert\in M$.
Thus for any $\beta\in \Lambda^*, \lvert f\rvert\in N_\beta$ if and only if $f\in (N_\beta)_c$.
This means that $\{(N_{\beta})_c\}_{\beta\in\Lambda^*}$ is the collection of maximal ideals in $P(X,\mathbb{C})$ which contain $f$, and we have already observed that $f\in \cap_{\beta\in\Lambda^*}(N_\beta)_c\subseteq I_c$.
Consequently $I_c$ is a $z$-ideal in $P(X,\mathbb{C})$.
\end{proof}

If we use the result embodied in Remark~\ref{rmk:allminprimeidealsinPXC} and take note of the fact that each minimal prime ideal in $P(X,\mathbb{C})$ is absolutely convex and argue as in the proof of Theorem~\ref{thm:charzidealsinPXC}, we get the following proposition:

\begin{theorem}\label{thm:charz0idealsinPXC}
An ideal $J$ in a ring $P(X,\mathbb{C})\in \Sigma (X,\mathbb{C})$ is a $z^\circ$-ideal in $P(X,\C)$ if and only if there exists a $z^\circ$-ideal $I$ in $P(X,\mathbb{C})\cap C(X)$ such that $J=I_c$.
\end{theorem}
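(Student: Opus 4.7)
The plan is to imitate the argument for Theorem~\ref{thm:charzidealsinPXC}, replacing ``maximal ideal'' by ``minimal prime ideal'' throughout, and using Remark~\ref{rmk:allminprimeidealsinPXC} in place of Theorem~\ref{thm:allmaxidealsinPXC}. The two key enabling facts are: (i) every $z^\circ$-ideal in $P(X,\mathbb{C})$ is absolutely convex (already observed in the text after the definition of $z$-ideals), and (ii) for any absolutely convex ideal $K$ in $A(X):=P(X,\mathbb{C})\cap C(X)$, Theorem~\ref{thm:absconv} says $K_c=\{h\in P(X,\mathbb{C}):|h|\in K\}$; in particular, for any minimal prime $Q$ of $A(X)$ and $g\in P(X,\mathbb{C})$, one has $g\in Q_c$ iff $|g|\in Q$.

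For the forward direction, assume $J$ is a $z^\circ$-ideal of $P(X,\mathbb{C})$. Set $I:=J\cap C(X)$. Since $J$ is absolutely convex, Theorem~\ref{thm:convexextrest}\ref{thm:convexextrest:item3} yields $J=I_c$, so it remains to show $I$ is a $z^\circ$-ideal in $A(X)$. Take $f\in I$ and let $\{P_\alpha:\alpha\in\Lambda\}$ be the family of all minimal prime ideals of $A(X)$ containing $f$. By Remark~\ref{rmk:allminprimeidealsinPXC} (combined with the strictness part of Theorem~\ref{thm:idealorderandstrictness} to preserve minimality), $\{(P_\alpha)_c:\alpha\in\Lambda\}$ is precisely the collection of minimal prime ideals of $P(X,\mathbb{C})$ containing $f$. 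Since $f\in J$ and $J$ is a $z^\circ$-ideal, $\bigcap_\alpha (P_\alpha)_c\subseteq J=I_c$. Applying Lemma~\ref{thm:extensiondistributeintersection} and intersecting with $C(X)$ (using $I_c\cap C(X)=I$ from Theorem~\ref{thm:IdealExtensionTightness}), we obtain $\bigcap_\alpha P_\alpha\subseteq I$, so $f$ belongs to the intersection of all minimal primes of $A(X)$ containing it, and this intersection lies in $I$. Hence $I$ is a $z^\circ$-ideal.

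For the converse, let $I$ be a $z^\circ$-ideal of $A(X)$ and let $f\in I_c$. Absolute convexity of $I_c$ combined with Theorem~\ref{thm:absconv} gives $|f|\in I$. Let $\{N_\beta:\beta\in\Lambda^*\}$ be the family of minimal primes of $A(X)$ containing $|f|$. By hypothesis $\bigcap_\beta N_\beta\subseteq I$, and Lemma~\ref{thm:extensiondistributeintersection} gives $\bigcap_\beta (N_\beta)_c\subseteq I_c$. By fact (ii) above, $|f|\in N_\beta$ iff $f\in (N_\beta)_c$, so $\{(N_\beta)_c\}$ is exactly the family of minimal primes of $P(X,\mathbb{C})$ containing $f$ (again using Remark~\ref{rmk:allminprimeidealsinPXC}). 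Thus the intersection of all minimal primes of $P(X,\mathbb{C})$ containing $f$ is contained in $I_c$, proving $I_c$ is a $z^\circ$-ideal.

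The main thing to verify carefully (and the only real ``obstacle'') is the translation between the minimal-prime condition on $|f|$ in $A(X)$ and the minimal-prime condition on $f$ in $P(X,\mathbb{C})$; this is exactly where absolute convexity of minimal primes plus Theorem~\ref{thm:absconv} comes in, matching the role played by absolute convexity of maximal ideals in Theorem~\ref{thm:charzidealsinPXC}.
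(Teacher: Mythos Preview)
Your proposal is correct and follows exactly the route the paper indicates: the paper does not spell out a proof but simply says to use Remark~\ref{rmk:allminprimeidealsinPXC}, the absolute convexity of minimal prime ideals in $P(X,\mathbb{C})$, and then argue verbatim as in Theorem~\ref{thm:charzidealsinPXC}, which is precisely what you have done. One tiny wording fix: in the converse direction, the hypothesis needed to invoke Theorem~\ref{thm:absconv} is the absolute convexity of $I$ in $A(X)$ (which holds because minimal primes of $A(X)$ are absolutely convex, hence so is any $z^\circ$-ideal), not of $I_c$; the conclusion $|f|\in I$ then follows directly from the description $I_c=\{h:|h|\in I\}$.
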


An ideal $I$ in $A(X)\in \Sigma (X)$ is called fixed if $\bigcap_{f\in I}Z(f)\neq \emptyset$. The following proposition is a straightforward consequence of Theorem~\ref{thm:IdealExtensionTightness}.

\begin{theorem}\label{thm:charfixedidealsinPXC}
An ideal $J$ in a ring $P(X,\mathbb{C})\in \Sigma (X,\mathbb{C})$ is a fixed ideal in $P(X,\C)$ if and only if $J\cap C(X)$ is a fixed ideal in $P(X,\mathbb{C})\cap C(X)$.
\end{theorem}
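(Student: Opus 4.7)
The plan is to establish the stronger set equality
\[ \bigcap_{h \in J} Z(h) = \bigcap_{f \in J \cap C(X)} Z(f), \]
from which the equivalence ``$J$ is fixed in $P(X,\mathbb{C})$ iff $J\cap C(X)$ is fixed in $P(X,\mathbb{C})\cap C(X)$'' is immediate, since one of these intersections is empty precisely when the other is.

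The inclusion $\bigcap_{h\in J} Z(h)\subseteq \bigcap_{f\in J\cap C(X)} Z(f)$ is automatic: $J\cap C(X)$ is a subfamily of $J$, and shrinking the indexing set enlarges the intersection of zero sets.

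For the reverse inclusion I would fix $h\in J$ and write $h=f+ig$ with $f,g\in C(X)$. By Theorem~\ref{thm:conjugateclosedcrit}, $P(X,\mathbb{C})=[A(X)]_c$ where $A(X)=P(X,\mathbb{C})\cap C(X)$; in particular this ring is conjugate-closed, so $f-ig$ also lies in $P(X,\mathbb{C})$. Multiplying the two elements inside the ideal $J$ produces $(f+ig)(f-ig)=f^2+g^2=\lvert h\rvert^2\in J$, and since $\lvert h\rvert^2$ is real-valued it belongs to $J\cap C(X)$. Because $Z(\lvert h\rvert^2)=Z(h)$, any $x_0\in\bigcap_{f\in J\cap C(X)} Z(f)$ automatically satisfies $x_0\in Z(h)$; as $h\in J$ was arbitrary, $x_0\in\bigcap_{h\in J} Z(h)$, which gives the desired inclusion.

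The only step that demands any thought is the passage from a complex-valued $h\in J$ to a real-valued witness in $J\cap C(X)$ with the same zero set, and this is handled cleanly by the conjugate-closedness baked into the description of members of $\Sigma(X,\mathbb{C})$ (together with Theorem~\ref{thm:IdealExtensionTightness}'s guarantee that $J\cap C(X)$ is itself an ideal in the appropriate real ring). I do not foresee any substantive obstacle.
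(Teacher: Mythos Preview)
Your argument is correct: the set equality $\bigcap_{h\in J}Z(h)=\bigcap_{f\in J\cap C(X)}Z(f)$ holds, and your proof of the nontrivial inclusion via $h\bar h=|h|^2\in J\cap C(X)$ with $Z(|h|^2)=Z(h)$ is exactly the right mechanism.

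The paper itself gives no detailed proof here; it simply declares the result a straightforward consequence of Theorem~\ref{thm:IdealExtensionTightness}. Your write-up is the natural way to make that declaration precise, and indeed the same $(f+ig)(f-ig)=f^2+g^2$ manoeuvre is what the paper uses explicitly in the proof of Theorem~\ref{thm:primeidealconditionbf}. One small remark: your closing parenthetical invoking Theorem~\ref{thm:IdealExtensionTightness} to justify that $J\cap C(X)$ is an ideal of $P(X,\mathbb{C})\cap C(X)$ is unnecessary---that fact is immediate from the definition of an ideal intersected with a subring, and Theorem~\ref{thm:IdealExtensionTightness} goes in the other direction (from $I$ to $I_c$). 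But this does not affect the validity of your proof.
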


We recall that a space $X$ is called an \emph{almost $P$ space} if every non-empty $G_\delta$ subset of $X$ has nonempty interior. These spaces have been characterized via $z$-ideals and $z^\circ$-ideals in the ring $C(X)$ in \cite{AKA1999}. We would like to mention that the same class of spaces have witnessed a very recent characterization in terms of fixed maximal ideals in a given intermediate ring $A(X)\in \Sigma (X)$. We reproduce below these two results to make the paper self-contained.

\begin{theorem}(\cite{AKA1999})\label{thm:AKAalmostPintermsofz0ideals}
$X$ is an almost $P$ space if and only if each maximal ideal in $C(X)$ is a $z^\circ$-ideal if and only if each $z$-ideal in $C(X)$ is a $z^\circ$-ideal.
\end{theorem}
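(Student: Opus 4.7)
My plan is to close the triple equivalence by running the cycle (a) $X$ is almost $P$ $\Rightarrow$ (c) every $z$-ideal of $C(X)$ is a $z^\circ$-ideal $\Rightarrow$ (b) every maximal ideal of $C(X)$ is a $z^\circ$-ideal $\Rightarrow$ (a). The step (c)$\Rightarrow$(b) is free, since every maximal ideal of $C(X)$ is a $z$-ideal, so the real content lies in (a)$\Rightarrow$(c) and (b)$\Rightarrow$(a).

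For (a)$\Rightarrow$(c), I first upgrade almost-$P$ to the sharper statement that every zero set $Z(f)$ in $X$ is regular closed, i.e., $Z(f) = \overline{\operatorname{int}_X Z(f)}$. Given $x \in Z(f)$ and an open neighbourhood $V$ of $x$, complete regularity supplies a Urysohn function $g$ with $g(x)=0$ and $g \equiv 1$ off $V$; then $Z(f) \cap Z(g)$ is a non-empty zero set sitting inside $V$ and, by almost-$P$, has non-empty interior, which produces an interior point of $Z(f)$ inside $V$ and places $x$ in $\overline{\operatorname{int}_X Z(f)}$. With this in hand, I invoke the identification valid in any reduced ring that $P_a = \{b : \operatorname{Ann}(a) \subseteq \operatorname{Ann}(b)\}$ (the nontrivial half: if $ca = 0$ and $cb \ne 0$, any minimal prime avoiding $cb$ is a minimal prime containing $a$ but not $b$). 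Because cozero sets form a base for $X$, the annihilator containment transcribes into $\operatorname{int}_X Z(a) \subseteq \operatorname{int}_X Z(b)$. Taking closures and using regular-closedness then yields $Z(a) \subseteq Z(b)$, so $b$ lies in every maximal ideal containing $a$, i.e., $b \in M_a$. Consequently, for any $z$-ideal $I$ with $a \in I$, we have $P_a \subseteq M_a \subseteq I$, showing $I$ is a $z^\circ$-ideal.

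For (b)$\Rightarrow$(a), I argue by contrapositive. If $X$ is not almost $P$, fix $f \in C(X)$ with $Z(f)$ non-empty but $\operatorname{int}_X Z(f) = \emptyset$, and pick $p \in Z(f)$. The fixed maximal ideal $M_p = \{h \in C(X) : h(p) = 0\}$ contains $f$, yet $f$ is not a zero divisor: $fh = 0$ forces the open set $\operatorname{coz}(h)$ into $Z(f)$, hence into $\operatorname{int}_X Z(f) = \emptyset$, so $h \equiv 0$. However, every $z^\circ$-ideal is contained in the union of the minimal prime ideals, and minimal primes in the reduced ring $C(X)$ consist entirely of zero divisors; hence every element of a $z^\circ$-ideal is a zero divisor. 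This rules out $M_p$ being a $z^\circ$-ideal, contradicting (b).

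The main technical hurdle is the bridge in (a)$\Rightarrow$(c) between the ring-theoretic description $P_a = \{b : \operatorname{Ann}(a) \subseteq \operatorname{Ann}(b)\}$ and its topological counterpart $\operatorname{int}_X Z(a) \subseteq \operatorname{int}_X Z(b)$; verifying both identifications rests on the cozero-set base for $X$ and on the minimal-prime machinery for reduced rings, but once they are in place, regular-closedness of zero sets drives the remainder of the argument routinely.
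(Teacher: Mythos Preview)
The paper does not give its own proof of this statement; it merely quotes the result from \cite{AKA1999} in order to invoke it later (in Theorems~\ref{thm:CXCalmostPintermsofz0ideals} and~\ref{thm:CXCamongIntermediateRingsWhenAlmostP}). So there is no in-paper argument to compare yours against.

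That said, your argument is correct. Two minor points worth making explicit. First, in the step (b)$\Rightarrow$(a) you pass directly from ``$X$ is not almost~$P$'' to the existence of $f\in C(X)$ with $Z(f)\neq\emptyset$ and $\operatorname{int}_X Z(f)=\emptyset$; this uses the standard (and easy) equivalence, valid in Tychonoff spaces, between ``every nonempty $G_\delta$ has nonempty interior'' and ``every nonempty zero set has nonempty interior'' (given $p$ in a nonempty $G_\delta$ $G=\bigcap_n U_n$, take $f=\sum 2^{-n}f_n$ with $f_n(p)=0$, $f_n\equiv 1$ off $U_n$, so that $\emptyset\neq Z(f)\subseteq G$). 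Second, your assertion that every element of a \emph{proper} $z^\circ$-ideal is a zero divisor is justified as follows: if $a$ lies in no minimal prime, then $P_a$ is the empty intersection, i.e.\ the whole ring, which forces any $z^\circ$-ideal containing $a$ to be improper; hence each $a$ in a proper $z^\circ$-ideal lies in some minimal prime and is therefore a zero divisor in the reduced ring $C(X)$. With these two clarifications your cycle (a)$\Rightarrow$(c)$\Rightarrow$(b)$\Rightarrow$(a) goes through cleanly.
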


\begin{theorem}(\cite{BAM})\label{thm:AKAalmostPintermsoffixedideals}
Let $A(X)\in \Sigma (X)$ be an intermediate ring of real-valued continuous functions on $X$. Then $X$ is an almost $P$ space if and only if each fixed maximal ideal $M^p_A=\{g\in A(X): g(p)=0\}$ of $A(X)$ is a $z^\circ$-ideal.
\end{theorem}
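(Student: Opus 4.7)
The plan is to derive Theorem~\ref{thm:AKAalmostPintermsoffixedideals} from Theorem~\ref{thm:AKAalmostPintermsofz0ideals} by exploiting the standard contraction bijection between prime ideals of $A(X)$ and prime ideals of $C(X)$ that is available for every $A(X)\in\Sigma(X)$: namely, the map $Q\mapsto Q\cap A(X)$ is an order-isomorphism of prime spectra, and in particular it restricts to a bijection between minimal prime ideals of $C(X)$ and minimal prime ideals of $A(X)$. Note also that $M^p_A=M^p_{C(X)}\cap A(X)$.

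For the forward direction, assume $X$ is almost $P$, so by Theorem~\ref{thm:AKAalmostPintermsofz0ideals} every maximal ideal of $C(X)$ is a $z^\circ$-ideal; in particular $M^p_{C(X)}$ is a $z^\circ$-ideal in $C(X)$ for each $p\in X$. Given $f\in M^p_A$ and a minimal prime $R$ of $A(X)$ containing $f$, lift $R$ to a minimal prime $Q$ of $C(X)$ with $R=Q\cap A(X)$ and $f\in Q$. Since $M^p_{C(X)}$ is $z^\circ$ and $f\in M^p_{C(X)}$, every minimal prime of $C(X)$ containing $f$ lies in $M^p_{C(X)}$, hence $Q\subseteq M^p_{C(X)}$. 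Contracting to $A(X)$ yields $R\subseteq M^p_A$, proving $M^p_A$ is a $z^\circ$-ideal.

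For the reverse direction, I would instead show that each $M^p_{C(X)}$ is a $z^\circ$-ideal in $C(X)$ and then invoke the pointwise content of Theorem~\ref{thm:AKAalmostPintermsofz0ideals} (i.e.\ that a space is almost $P$ precisely when every point is an almost $P$-point) to conclude that $X$ is almost $P$. The key trick is bounded normalization: given $f\in M^p_{C(X)}$, set $f^{\sharp}:=f/(1+f^2)$, which is a bounded continuous function, so $f^{\sharp}\in C^*(X)\subseteq A(X)$, with $f^{\sharp}(p)=0$ and hence $f^{\sharp}\in M^p_A$. Moreover $1/(1+f^2)$ is a unit in $C(X)$, so $f$ and $f^{\sharp}$ are associates in $C(X)$ and therefore lie in exactly the same minimal primes of $C(X)$. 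Now let $Q$ be any minimal prime of $C(X)$ containing $f$; then $f^{\sharp}\in Q$, and $R:=Q\cap A(X)$ is a minimal prime of $A(X)$ containing $f^{\sharp}$, so the hypothesis that $M^p_A$ is a $z^\circ$-ideal forces $R\subseteq M^p_A$. Given any $h\in Q$, the same normalization produces $h^{\sharp}\in A(X)$ with $h$ and $h^{\sharp}$ associate in $C(X)$, so $h^{\sharp}\in Q\cap A(X)=R\subseteq M^p_A$, giving $h^{\sharp}(p)=0$ and hence $h(p)=0$. Thus $Q\subseteq M^p_{C(X)}$, as required.

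The principal technical obstacle is the careful use of the contraction bijection between (minimal) prime ideals of $A(X)$ and of $C(X)$: one must verify that a minimal prime of $A(X)$ actually lifts to a minimal prime of $C(X)$, which is a standard but nontrivial feature of intermediate rings, implicit in \cite{P,RW1987}. A secondary subtlety is that Theorem~\ref{thm:AKAalmostPintermsofz0ideals} is phrased in terms of all maximal ideals of $C(X)$, while the reverse direction only supplies the fixed ones; the gap is bridged by the fact that almost $P$-ness is a pointwise topological condition, so knowing each $M^p_{C(X)}$ ($p\in X$) is a $z^\circ$-ideal is enough to conclude that every nonempty $G_\delta$ set has nonempty interior.
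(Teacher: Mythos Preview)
The paper does not prove this theorem; it is reproduced from \cite{BAM} without proof, so there is no ``paper's approach'' to compare against. Your attempt, however, contains two genuine errors.

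First, the asserted order-isomorphism $\operatorname{Spec}(C(X))\to\operatorname{Spec}(A(X))$ via $Q\mapsto Q\cap A(X)$ is false in general, and in particular the induced map on minimal primes is neither injective nor surjective. Take $X=\mathbb{N}$ and $A(X)=C^*(X)$. Then $C(X)$ is von~Neumann regular, so every prime of $C(X)$ is simultaneously minimal and maximal; but $C^*(X)\cong C(\beta\mathbb{N})$ has plenty of non-maximal primes (since $\beta\mathbb{N}$ is not a $P$-space), and any such prime cannot be the contraction of a prime of $C(X)$, since contractions are all of the form $M^p\cap C^*(X)=M^{*p}$, which is maximal. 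Equally, the contraction of a minimal prime of $C(X)$ need not be minimal in $A(X)$. This is not ``implicit in \cite{P,RW1987}''; those papers establish the bijection only at the level of maximal ideals.

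Second, and independently, you repeatedly use that if a $z^\circ$-ideal $I$ contains $f$ then every minimal prime containing $f$ is contained in $I$. The definition only gives $P_f=\bigcap\{Q:Q\text{ minimal prime},\,f\in Q\}\subseteq I$, which is strictly weaker. For a concrete failure take $X=\{1,2,3\}$ discrete, $p=1$, $f=(0,0,1)$. Then $M^p=M_1$ is certainly a $z^\circ$-ideal, $f\in M_1$, and the minimal prime $M_2$ contains $f$, yet $M_2\not\subseteq M_1$. Both your forward and reverse directions invoke this false implication.

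A route that actually works bypasses minimal primes entirely. In any reduced ring one has $P_a=\{b:\operatorname{Ann}(a)\subseteq\operatorname{Ann}(b)\}$, and for $f,g$ in either $C(X)$ or $A(X)$ the condition $\operatorname{Ann}(f)\subseteq\operatorname{Ann}(g)$ is equivalent to $\operatorname{int}Z(f)\subseteq\operatorname{int}Z(g)$ (cozero sets and zero sets are the same for $A(X)$ and $C(X)$ because $C^*(X)\subseteq A(X)$). Hence $P_f^{A(X)}=P_f^{C(X)}\cap A(X)$ for $f\in A(X)$, and $M^p_A$ is a $z^\circ$-ideal in $A(X)$ if and only if $f(p)=0$ implies $p\in\overline{\operatorname{int}Z(f)}$ for every $f\in A(X)$; since every zero set arises from a bounded function, this is the same condition with $C(X)$ in place of $A(X)$, i.e.\ $M^p_{C(X)}$ is $z^\circ$ in $C(X)$. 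Your bounded-normalization trick $f^\sharp=f/(1+f^2)$ is the right idea for matching zero sets between the two rings, but it should be fed into this annihilator description rather than into a nonexistent bijection of minimal primes.
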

It is further realised in \cite{BAM} that if $X$ is an almost $P$ space, then the statement of Theorem~\ref{thm:AKAalmostPintermsofz0ideals} cannot be improved by replacing $C(X)$ by an intermediate ring $A(X)$, different from $C(X)$. Indeed it is shown in \cite[Theorem 2.4]{BAM} that if an intermediate ring $A(X)\neq C(X)$, then there exists a maximal ideal in $A(X)$ (which is incidentally also a $z$-ideal in $A(X)$), which is not a $z^\circ$-ideal in $A(X)$.

We record below the complex analogue of the above results.

\begin{theorem}\label{thm:CXCalmostPintermsofz0ideals}
$X$ is an almost $P$ space if and only if each maximal ideal of $C(X,\mathbb{C})$ is a $z^\circ$-ideal if and only if each $z$-ideal in $C(X,\mathbb{C})$ is a $z^\circ$-ideal.
\end{theorem}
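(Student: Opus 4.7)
The plan is to transfer the real-valued equivalences in Theorem~\ref{thm:AKAalmostPintermsofz0ideals} across the correspondences between ideals of $C(X,\mathbb{C})$ and ideals of $C(X,\mathbb{C})\cap C(X)=C(X)$ that were established earlier in the section. Specializing $P(X,\mathbb{C})=C(X,\mathbb{C})$ in Theorems~\ref{thm:allmaxidealsinPXC}, \ref{thm:charzidealsinPXC}, and \ref{thm:charz0idealsinPXC} yields bijections $I\leftrightarrow I_c$ between, respectively, the maximal ideals of $C(X)$ and those of $C(X,\mathbb{C})$, the $z$-ideals of $C(X)$ and those of $C(X,\mathbb{C})$, and the $z^\circ$-ideals of $C(X)$ and those of $C(X,\mathbb{C})$. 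Injectivity of each of these maps is guaranteed by Theorem~\ref{thm:idealorderandstrictness}, since $I_c=J_c$ forces $I=J$.

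Using these three bijections I would first check that the condition \emph{every maximal ideal of $C(X,\mathbb{C})$ is a $z^\circ$-ideal} is equivalent to \emph{every maximal ideal of $C(X)$ is a $z^\circ$-ideal}. For the forward direction, given a maximal ideal $M$ of $C(X)$, the extension $M_c$ is maximal in $C(X,\mathbb{C})$ by Theorem~\ref{thm:allmaxidealsinPXC} and hence a $z^\circ$-ideal by hypothesis; Theorem~\ref{thm:charz0idealsinPXC} then writes $M_c=I_c$ for some $z^\circ$-ideal $I$ of $C(X)$, and Theorem~\ref{thm:idealorderandstrictness} forces $I=M$, so $M$ is a $z^\circ$-ideal. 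The converse direction is a direct application of Theorem~\ref{thm:charz0idealsinPXC}. An identical argument, with \emph{maximal ideal} replaced by \emph{$z$-ideal} and Theorem~\ref{thm:allmaxidealsinPXC} replaced by Theorem~\ref{thm:charzidealsinPXC}, gives the analogous equivalence for $z$-ideals. Chaining these two equivalences with the two equivalences of Theorem~\ref{thm:AKAalmostPintermsofz0ideals} produces the desired triple equivalence.

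There is no serious obstacle here; the proof is essentially bookkeeping built on the correspondence dictionary developed earlier in Section~2. The one point that deserves a moment of care is the appeal to Theorem~\ref{thm:idealorderandstrictness} needed to upgrade the statement "$M_c$ equals the extension of some $z^\circ$-ideal $I$" into "$M$ itself is a $z^\circ$-ideal", since Theorem~\ref{thm:charz0idealsinPXC} only exhibits \emph{some} such $I$; everything else is a direct citation.
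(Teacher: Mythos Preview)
Your proof is correct and follows essentially the same approach as the paper, which simply says the result follows from combining Theorems~\ref{thm:allmaxidealsinPXC}, \ref{thm:charzidealsinPXC}, \ref{thm:charz0idealsinPXC}, and~\ref{thm:AKAalmostPintermsofz0ideals}. You have merely made explicit the bookkeeping (including the appeal to Theorem~\ref{thm:idealorderandstrictness} for injectivity of $I\mapsto I_c$) that the paper leaves to the reader.
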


\begin{proof}
This follows from combining Theorems~\ref{thm:allmaxidealsinPXC}, ~\ref{thm:charzidealsinPXC},~\ref{thm:charz0idealsinPXC}, and~\ref{thm:AKAalmostPintermsofz0ideals}.
\end{proof}

\begin{theorem}
Let $P(X,\mathbb{C})\in \Sigma (X,\mathbb{C})$. Then $X$ is almost $P$ if and only if each fixed maximal ideal $M^p_P=\{g\in P(X,\mathbb{C}): g(p)=0\}$ of $P(X,\mathbb{C})$ is a $z^\circ$-ideal.
\end{theorem}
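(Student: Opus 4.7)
The plan is to reduce the claim to its real-valued analogue, namely Theorem~\ref{thm:AKAalmostPintermsoffixedideals}, via the ideal correspondence between $P(X,\mathbb{C})$ and its real part $A(X) := P(X,\mathbb{C})\cap C(X)$, which is a member of $\Sigma(X)$ by Theorem~\ref{thm:conjugateclosedcrit}. Write $M^p_A = \{f\in A(X): f(p)=0\}$ for the fixed maximal ideal of $A(X)$ at $p$.

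First I would verify the key identification $M^p_P = (M^p_A)_c$. The containment $(M^p_A)_c \subseteq M^p_P$ is immediate since any sum $f+ig$ with $f,g$ vanishing at $p$ vanishes at $p$. For the reverse, since $P(X,\mathbb{C}) = [A(X)]_c$ by Theorem~\ref{thm:conjugateclosedcrit}, any $h\in M^p_P$ can be written uniquely as $h = f+ig$ with $f,g\in A(X)$, and the condition $h(p)=0$ forces $f(p)=g(p)=0$, so $f,g\in M^p_A$ and hence $h\in (M^p_A)_c$.

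Next I would use Theorem~\ref{thm:charz0idealsinPXC} to transport the $z^\circ$-ideal property across the correspondence: $(M^p_A)_c$ is a $z^\circ$-ideal of $P(X,\mathbb{C})$ if and only if there exists a $z^\circ$-ideal $I$ of $A(X)$ with $(M^p_A)_c = I_c$, and by Theorem~\ref{thm:IdealExtensionTightness} intersecting with $C(X)$ forces $I = M^p_A$. Thus $M^p_P$ is a $z^\circ$-ideal in $P(X,\mathbb{C})$ if and only if $M^p_A$ is a $z^\circ$-ideal in $A(X)$.

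Finally I would invoke Theorem~\ref{thm:AKAalmostPintermsoffixedideals} applied to the intermediate ring $A(X)\in \Sigma(X)$: $X$ is an almost $P$ space if and only if every fixed maximal ideal $M^p_A$ of $A(X)$ is a $z^\circ$-ideal, which by the previous paragraph is if and only if every fixed maximal ideal $M^p_P$ of $P(X,\mathbb{C})$ is a $z^\circ$-ideal. The main obstacle is purely bookkeeping, namely the verification $M^p_P = (M^p_A)_c$ and the fact that the restriction $J \mapsto J\cap C(X)$ recovers the underlying real $z^\circ$-ideal uniquely; once this is in place, no additional topological argument is needed beyond what Theorem~\ref{thm:AKAalmostPintermsoffixedideals} already supplies.
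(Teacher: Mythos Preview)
Your proof is correct and follows essentially the same strategy as the paper: reduce to the real-valued setting via the ideal correspondence $I\mapsto I_c$ and then invoke Theorem~\ref{thm:AKAalmostPintermsoffixedideals}. The paper's own proof is terser (it simply cites Theorems~\ref{thm:allmaxidealsinPXC}, \ref{thm:charfixedidealsinPXC}, and~\ref{thm:AKAalmostPintermsoffixedideals}), whereas you explicitly verify $M^p_P=(M^p_A)_c$ and appeal to Theorem~\ref{thm:charz0idealsinPXC} to transport the $z^\circ$-ideal property---arguably a cleaner and more complete bookkeeping of the same reduction.
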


\begin{proof}
This follows from combining Theorems~\ref{thm:allmaxidealsinPXC},~\ref{thm:charfixedidealsinPXC}, and~\ref{thm:AKAalmostPintermsoffixedideals}.
\end{proof}

\begin{theorem}\label{thm:CXCamongIntermediateRingsWhenAlmostP}
Let $X$ be an almost $P$ space and let $P(X,\mathbb{C})$ be a member of $\Sigma (X,\mathbb{C})$ such that $P(X,\mathbb{C})\subsetneq C(X,\mathbb{C})$.
Then there exists a maximal ideal in $P(X,\mathbb{C})$, which is not a $z^\circ$-ideal in $P(X,\mathbb{C})$.
\end{theorem}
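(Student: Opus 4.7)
The plan is to reduce the complex statement to its real-valued counterpart (the result from \cite[Theorem 2.4]{BAM} cited in the paragraph above) by passing through the correspondence machinery already established.

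First I would set $A(X)=P(X,\mathbb{C})\cap C(X)$, which by Theorem~\ref{thm:conjugateclosedcrit} lies in $\Sigma(X)$ and satisfies $P(X,\mathbb{C})=[A(X)]_c$. The initial key observation is that the strict inclusion $P(X,\mathbb{C})\subsetneq C(X,\mathbb{C})$ forces $A(X)\subsetneq C(X)$: if $A(X)=C(X)$ then $P(X,\mathbb{C})=[C(X)]_c=C(X,\mathbb{C})$, contradicting the hypothesis.

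Next, since $X$ is an almost $P$ space and $A(X)\in\Sigma(X)$ with $A(X)\neq C(X)$, I would invoke the real analogue \cite[Theorem 2.4]{BAM} (quoted in the discussion following Theorem~\ref{thm:AKAalmostPintermsoffixedideals}) to obtain a maximal ideal $M$ in $A(X)$ that is \emph{not} a $z^\circ$-ideal in $A(X)$. Lifting this ideal, I would then consider $M_c\subseteq P(X,\mathbb{C})$; by Theorem~\ref{thm:allmaxidealsinPXC}, $M_c$ is a maximal ideal of $P(X,\mathbb{C})$, which is the candidate we want.

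Finally, I would verify that $M_c$ is not a $z^\circ$-ideal by contradiction: suppose it were. Then Theorem~\ref{thm:charz0idealsinPXC} provides a $z^\circ$-ideal $I$ in $A(X)$ with $M_c=I_c$. Applying $(\,\cdot\,)\cap C(X)$ and using the tightness relation $I_c\cap C(X)=I$ from Theorem~\ref{thm:IdealExtensionTightness}, together with $M_c\cap C(X)=M_c\cap A(X)=M$ (again by Theorem~\ref{thm:IdealExtensionTightness}), I would conclude that $I=M$. But then $M$ would be a $z^\circ$-ideal in $A(X)$, contradicting the choice of $M$.

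The argument is essentially a routine application of the correspondences between ideals in $P(X,\mathbb{C})$ and in $A(X)$; the only delicate point is the very first step, namely noticing that strictness of the inclusion is preserved by restriction to the real part, which is what unlocks the use of the real-valued theorem from \cite{BAM}. I do not expect any substantive obstacle beyond being careful about which correspondence theorem to invoke in the final contradiction.
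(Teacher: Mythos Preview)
Your proposal is correct and follows essentially the same approach as the paper, which simply states that the result follows from combining Theorems~\ref{thm:allmaxidealsinPXC}, \ref{thm:charzidealsinPXC}, \ref{thm:charz0idealsinPXC}, and \cite[Theorem 2.4]{BAM}. You have spelled out the details carefully---including the observation that strictness of the inclusion passes to the real part, and the tightness argument via Theorem~\ref{thm:IdealExtensionTightness} in the contradiction step---and in fact your write-up shows that Theorem~\ref{thm:charzidealsinPXC} (the $z$-ideal correspondence) is not actually needed here.
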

Thus, within the class of almost $P$-spaces $X$, $C(X,\mathbb{C})$ is characterized amongst all the intermediate rings $P(X,\mathbb{C})$ of $\Sigma (X,\mathbb{C})$ by the requirement that $z$-ideals and $z^\circ$-ideals (equivalently maximal ideals and $z^\circ$-ideals) in $P(X,\mathbb{C})$ are one and the same.

\begin{proof}
This follows from combining Theorems~\ref{thm:allmaxidealsinPXC}, ~\ref{thm:charzidealsinPXC}, and~\ref{thm:charz0idealsinPXC}
of this article together with \cite[Theorem 2.4]{BAM}.
\end{proof}

We recall the classical result that $X$ is a $P$ space if and only if $C(X)$ is a Von-Neumann regular ring meaning that each prime ideal in $C(X)$ is maximal. Incidentally the following fact was rather recently established:

\begin{theorem}(\cite{AB2013},\cite{MSW},\cite{BAM})\label{thm:CXnotregular}
If $A(X)\in \Sigma (X)$ is different from $C(X)$, then $A(X)$ is never a regular ring.
\end{theorem}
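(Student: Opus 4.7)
The plan is to reduce the complex-valued statement to the already-known real-valued result \cite[Theorem 2.4]{BAM} (referenced in the paragraph following Theorem~\ref{thm:AKAalmostPintermsoffixedideals}) by exploiting the tight correspondence $I \mapsto I_c$ between ideals of $P(X,\mathbb{C})\cap C(X)$ and ideals of $P(X,\mathbb{C})$ developed throughout this section.

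First, I would set $A(X) := P(X,\mathbb{C})\cap C(X)$. By Theorem~\ref{thm:conjugateclosedcrit} we have $P(X,\mathbb{C}) = [A(X)]_c$, and $A(X) \in \Sigma(X)$. The hypothesis $P(X,\mathbb{C}) \subsetneq C(X,\mathbb{C})$ forces $A(X) \subsetneq C(X)$, for if we had $A(X) = C(X)$ then $P(X,\mathbb{C}) = [A(X)]_c = [C(X)]_c = C(X,\mathbb{C})$, contradicting the strict inclusion. Since $X$ is an almost $P$-space and $A(X)$ is an intermediate ring of real-valued continuous functions strictly smaller than $C(X)$, \cite[Theorem 2.4]{BAM} applies and yields a maximal ideal $M$ of $A(X)$ that is a $z$-ideal but \emph{not} a $z^\circ$-ideal in $A(X)$.

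Next, I would lift $M$ to $P(X,\mathbb{C})$ via the extension map. By Theorem~\ref{thm:allmaxidealsinPXC}, $M_c$ is a maximal ideal in $P(X,\mathbb{C})$, so it only remains to verify that $M_c$ fails to be a $z^\circ$-ideal. Suppose, for contradiction, that $M_c$ were a $z^\circ$-ideal of $P(X,\mathbb{C})$. Then Theorem~\ref{thm:charz0idealsinPXC} would supply a $z^\circ$-ideal $I$ of $A(X)$ with $M_c = I_c$. Intersecting both sides with $A(X)$ and applying Theorem~\ref{thm:IdealExtensionTightness} (which says $I_c \cap A(X) = I$ and $M_c \cap A(X) = M$), we would obtain $I = M$. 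Hence $M$ itself would be a $z^\circ$-ideal in $A(X)$, directly contradicting the choice of $M$ via \cite[Theorem 2.4]{BAM}.

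There is no real obstacle here; the proof is essentially an exercise in bookkeeping, assembling the translations already established in this section. The only delicate point that has to be kept in mind is the injectivity of the assignment $I \mapsto I_c$ on ideals of $A(X)$, which is exactly what Theorem~\ref{thm:IdealExtensionTightness} provides and which is what allows the failure of the $z^\circ$-property to transfer down from $P(X,\mathbb{C})$ to $A(X)$.
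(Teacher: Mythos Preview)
Your proposal does not address the stated theorem at all. The statement labeled \texttt{thm:CXnotregular} is a \emph{real-valued} assertion: if $A(X)\in\Sigma(X)$ and $A(X)\neq C(X)$, then $A(X)$ is not a Von-Neumann regular ring. It is simply quoted from the literature (\cite{AB2013},\cite{MSW},\cite{BAM}) and is not given a proof in the paper. What you have written is a proof of a \emph{different} theorem, namely Theorem~\ref{thm:CXCamongIntermediateRingsWhenAlmostP}: that for an almost $P$-space $X$ and $P(X,\mathbb{C})\subsetneq C(X,\mathbb{C})$, there is a maximal ideal in $P(X,\mathbb{C})$ that is not a $z^\circ$-ideal. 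Your opening line (``reduce the complex-valued statement\ldots''), your hypothesis that $X$ is almost $P$, and your conclusion about $z^\circ$-ideals all pertain to that neighboring result, not to the one under consideration.

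For the theorem actually at hand, there is nothing complex-valued to reduce; the object $A(X)$ already lies in $\Sigma(X)$, and the content is that such a proper intermediate subring of $C(X)$ always contains a non-maximal prime ideal (equivalently, fails Von-Neumann regularity). The paper treats this as an external citation, so no argument is expected here. If you did want to sketch one, the relevant idea from \cite{MSW} and \cite{BAM} is to pick $f\in C(X)\setminus A(X)$, look at $g=\frac{1}{1+f^2}\in A(X)$, and show that $g$ has no square root (or no ``regular'' witness) in $A(X)$; none of the $I\mapsto I_c$ machinery is involved.
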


Theorems~\ref{thm:primeidealconditionbf},~\ref{thm:allmaxidealsinPXC}, and~\ref{thm:CXnotregular} yield in a straight forward manner the following result:

\begin{theorem}
If $P(X,\mathbb{C})\in \Sigma (X,\mathbb{C})$ is a proper subring of $C(X,\mathbb{C})$, then $P(X,\mathbb{C})$ is not a Von-Neumann regular ring.
\end{theorem}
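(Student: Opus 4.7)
The plan is to reduce to the real-valued analogue (Theorem~\ref{thm:CXnotregular}) by passing between $P(X,\mathbb{C})$ and its real part $A(X):=P(X,\mathbb{C})\cap C(X)$ via the extension correspondence set up earlier.

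First I would observe that $A(X)$ is a proper subring of $C(X)$. Indeed, by Theorem~\ref{thm:conjugateclosedcrit} we have $P(X,\mathbb{C})=[A(X)]_c$, so if $A(X)=C(X)$ then $P(X,\mathbb{C})=[C(X)]_c=C(X,\mathbb{C})$, contradicting the hypothesis that $P(X,\mathbb{C})$ is proper in $C(X,\mathbb{C})$. Hence $A(X)\in\Sigma(X)$ with $A(X)\neq C(X)$.

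Next, Theorem~\ref{thm:CXnotregular} applies to $A(X)$, so $A(X)$ is not Von-Neumann regular. Equivalently, there exists a prime ideal $Q$ of $A(X)$ which is not maximal, i.e.\ $Q\subsetneq M$ for some maximal ideal $M$ of $A(X)$.

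Now I would lift this to $P(X,\mathbb{C})$. By Theorem~\ref{thm:primeidealconditionbf}, $Q_c$ is a prime ideal of $P(X,\mathbb{C})$, and by Theorem~\ref{thm:allmaxidealsinPXC}, $M_c$ is a maximal ideal of $P(X,\mathbb{C})$. From $Q\subsetneq M$ and Theorem~\ref{thm:idealorderandstrictness} we conclude $Q_c\subsetneq M_c$. Thus $Q_c$ is a prime ideal of $P(X,\mathbb{C})$ that is properly contained in a maximal ideal, and therefore $Q_c$ itself is not maximal. Since a commutative ring with unity is Von-Neumann regular iff every prime ideal is maximal, $P(X,\mathbb{C})$ is not regular.

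There is no real obstacle here; the only subtlety is the very first step, namely verifying that properness of $P(X,\mathbb{C})$ in $C(X,\mathbb{C})$ forces properness of $A(X)$ in $C(X)$, which is immediate from Theorem~\ref{thm:conjugateclosedcrit}. Everything else is a direct invocation of the correspondences in Theorems~\ref{thm:primeidealconditionbf},~\ref{thm:idealorderandstrictness}, and~\ref{thm:allmaxidealsinPXC}.
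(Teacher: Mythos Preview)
Your proposal is correct and follows essentially the same approach as the paper: the paper simply states that the result ``yields in a straightforward manner'' from Theorems~\ref{thm:primeidealconditionbf}, \ref{thm:allmaxidealsinPXC}, and~\ref{thm:CXnotregular}, and you have spelled out precisely this straightforward argument, additionally making explicit the use of Theorems~\ref{thm:conjugateclosedcrit} and~\ref{thm:idealorderandstrictness} to pass between $P(X,\mathbb{C})$ and $A(X)$.
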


It is well-known that if $P$ is a non maximal prime ideal in $C(X)$ and $M$ is the unique maximal ideal containing $P$, then the set of all prime ideals in $C(X)$ that lie between $P$ and $M$ makes a Dedekind complete chain containing no fewer than $2^{\aleph_1}$ many members (see \cite[Theorem 14.19]{GJ}). If we use this standard result and combine with Theorems~\ref{thm:idealorderandstrictness},~\ref{thm:primeidealconditionbf}, and~\ref{thm:allmaxidealsinPXC}, we obtain the complex-version of this fact:

\begin{theorem}
Suppose $P$ is a non maximal prime ideal in the ring $C(X,\mathbb{C})$. Then there exists a unique maximal ideal $M$ containing $P$ in this ring. Furthermore, the collection of all prime ideals that are situated between $P$ and $M$ constitutes a Dedeking complete chain containing at least $2^{\alpha_1}$ many members.
\end{theorem}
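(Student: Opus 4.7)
\smallskip
\noindent\textbf{Proof proposal.} The strategy is to transfer the classical real-valued statement through the bijection $Q \mapsto Q_c$ between prime ideals of $C(X)$ and prime ideals of $C(X,\mathbb{C})$ supplied by Remark~\ref{rmk:allprimeidealsinPXC}, using Theorem~\ref{thm:idealorderandstrictness} to guarantee that this bijection preserves and reflects both inclusion and strict inclusion, and Theorem~\ref{thm:allmaxidealsinPXC} to guarantee that it sends maximal ideals to maximal ideals. Once this correspondence is in place, all three claims (unique containing maximal ideal, Dedekind completeness of the chain, cardinality bound $2^{\aleph_1}$) will transfer mechanically from \cite[Theorem 14.19]{GJ}.

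First I would invoke Remark~\ref{rmk:allprimeidealsinPXC} to write $P = Q_c$ for some prime ideal $Q$ of $C(X)$. Since $P$ is non-maximal in $C(X,\mathbb{C})$, Theorem~\ref{thm:allmaxidealsinPXC} together with Theorem~\ref{thm:idealorderandstrictness} force $Q$ to be non-maximal in $C(X)$: otherwise $Q_c$ would be maximal. Then \cite[Theorem 14.19]{GJ} yields a unique maximal ideal $N$ of $C(X)$ containing $Q$, with the prime ideals between $Q$ and $N$ forming a Dedekind complete chain of cardinality at least $2^{\aleph_1}$. Set $M = N_c$; by Theorem~\ref{thm:allmaxidealsinPXC} this is a maximal ideal of $C(X,\mathbb{C})$ and by Theorem~\ref{thm:idealorderandstrictness} it contains $P = Q_c$.

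Next I would establish uniqueness of $M$. If $M'$ is any maximal ideal of $C(X,\mathbb{C})$ with $P\subseteq M'$, then by Theorem~\ref{thm:allmaxidealsinPXC} one has $M' = N'_c$ for some maximal $N'$ of $C(X)$, and Theorem~\ref{thm:idealorderandstrictness} converts $Q_c\subseteq N'_c$ into $Q\subseteq N'$; uniqueness of $N$ on the real side then gives $N' = N$, hence $M' = M$.

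Finally I would verify the chain structure. The assignment $R \mapsto R_c$ restricts to a bijection between the prime ideals of $C(X)$ strictly between $Q$ and $N$ and the prime ideals of $C(X,\mathbb{C})$ strictly between $P$ and $M$: surjectivity follows from Remark~\ref{rmk:allprimeidealsinPXC} together with the observation that if $P \subsetneq R_c \subsetneq M$ then $Q \subsetneq R \subsetneq N$ by Theorem~\ref{thm:idealorderandstrictness}; injectivity and the order-preservation in both directions are exactly Theorem~\ref{thm:idealorderandstrictness}. Hence the two chains are order-isomorphic, so Dedekind completeness and the cardinality estimate $\geq 2^{\aleph_1}$ transfer verbatim. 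I do not anticipate a genuine obstacle here; the only delicate point is being careful to check that the bijection is order-preserving in \emph{both} directions (so that suprema and infima on the complex side correspond to suprema and infima on the real side), but this is immediate from the ``if and only if'' form of Theorem~\ref{thm:idealorderandstrictness}.
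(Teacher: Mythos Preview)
Your proposal is correct and follows essentially the same approach as the paper: the paper simply states that the result follows by combining \cite[Theorem 14.19]{GJ} with Theorems~\ref{thm:idealorderandstrictness}, \ref{thm:primeidealconditionbf}, and \ref{thm:allmaxidealsinPXC}, and you have written out exactly how that combination works (your use of Remark~\ref{rmk:allprimeidealsinPXC} in place of Theorem~\ref{thm:primeidealconditionbf} is immaterial, as the former is just a restatement of the latter).
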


Thus for all practical purposes (say for example when $X$ is not a $P$ space), $C(X,\mathbb{C})$ is far from being a Noetherian ring. Incidentally we shall decide the Noetherianness condition of $C(X,\mathbb{C})$ by deducing it from a result in Section~\ref{sec:IdealsFormCpCinf}; in particular, we show that $C(X,\mathbb{C})$ is Noetherian if and only if $X$ is a finite set.

\section{Structure spaces of intermediate rings}

We need to recall a few technicalities associated with the hull-kernel topology on the set of all maximal ideals $\mathcal{M}(A)$ of a commutative ring $A$ with unity. If we set for any element $a$ of $A$, $\mathcal{M}(A)_a=\{M\in \mathcal{M}(A): a\in M\}$, then the family $\{\mathcal{M}(A)_a: a\in A\}$ constitutes a base for closed sets of the hull-kernel topology on $\mathcal{M}(A)$. We may write $\mathcal{M}_a$ for $\mathcal{M}(A)_a$ when the context is clear.
The set $\mathcal{M}(A)$ equipped with this hull-kernel topology is called the \emph{structure space} of the ring $A$.

For any subset $\mathcal{M}_\circ$ of $\mathcal{M}(A)$, its closure $\overline{\mathcal{M}_\circ}$ in this topology is given by: $\overline{\mathcal{M}_\circ}=\{M\in \mathcal{M}(A): M\supseteq \bigcap \mathcal{M}_\circ\}$.
For further information on this topology, see \cite[7M]{GJ}.

Following the terminology of \cite{CH}, by a (Hausdorff) compactification of a Tychonoff space $X$ we mean a pair $(\alpha , \alpha X)$, where $\alpha X$ is a compact Hausdorff space and $\alpha : X\rightarrow \alpha X$ a topological embedding with $\alpha (X)$ dense in $\alpha X$.
For simplicity, we often designate such a pair by the notation $\alpha X$. Two compactifications $\alpha X$ and $\gamma X$ of $X$ are called \emph{topologically equivalent} if there exists a homeomorphism $\psi :\alpha X\rightarrow \gamma X$ with the property $\psi\circ \alpha =\gamma$.
A compactification $\alpha X$ of $X$ is said to possess the \emph{extension property} if given a compact Hausdorff space $Y$ and a continuous map $f:X\to Y$, there exists a continuous map $f^\alpha :\alpha X\to Y$ with the property $f^\alpha \circ \alpha =f$. It is well known that the Stone-\v{C}ech compactification $\beta X$ of $X$ or more formally the pair $(e,\beta X)$, where $e$ is the evaluation map on $X$ induced by $C^*(X)$ defined by the formula: $e(x)=(f(x):f\in C^*(X))$ such that $e: X\mapsto \mathbb{R}^{C^*(X)}$ , enjoys the extension property. Furthermore this extension property characterizes $\beta X$ amongst all the compactifications of $X$ in the sense that whenever a compactification $\alpha X$ of $X$ has extension property, it is topologically equivalent to $\beta X$.
For more information on these topic, see \cite[Chapter 1]{CH}.

The structure space $\mathcal{M}(A(X))$ of an arbitrary intermediate ring $A(X)\in \Sigma (X)$ has been proved to be homeomorphic to $\beta X$, independently by the authors in \cite{P} and \cite{RW1987}. Nevertheless we offer yet another independent technique to establish a modified version of the same fact by using the above terminology of \cite{CH}.

\begin{theorem}\label{thm:compactificationconstruction}
Let $\eta_A :X\rightarrow \mathcal{M}(A(X))$ be the map defined by $\eta_A (x)=M_A^x=\{g\in A(X):g(x)=0\}$ (a fixed maximal ideal in $A(X)$). Then the pair $(\eta_A , \mathcal{M}(A(X)))$ is a (Hausdorff) compactification of $X$, which further satisfies the extension property. Hence the pair $(\eta_A ,\mathcal{M}(A(X)))$ is topologically equivalent to the Stone-\v{C}ech compactification $\beta X$ of $X$.
\end{theorem}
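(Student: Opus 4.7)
The plan is to verify that $(\eta_A,\mathcal{M}(A(X)))$ is a Hausdorff compactification of $X$ possessing the extension property, and then to invoke the uniqueness characterization of $\beta X$ recalled in the excerpt. This decomposes into four items: (a) $\eta_A$ is a topological embedding, (b) $\eta_A(X)$ is dense in $\mathcal{M}(A(X))$, (c) $\mathcal{M}(A(X))$ is compact Hausdorff, and (d) the extension property holds.

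Items (a) and (b) are routine. Injectivity of $\eta_A$ follows from complete regularity: for distinct $x,y\in X$ pick $f\in C^*(X)\subseteq A(X)$ with $f(x)=0\ne f(y)$. The identity $\eta_A^{-1}(\mathcal{M}_g)=Z(g)$ gives both continuity and the embedding property, because zero-sets of functions in $C^*(X)$ form a base for the closed sets of the Tychonoff space $X$: every closed $C\subseteq X$ equals $\bigcap_{g\in\Lambda}Z(g)$ for some $\Lambda\subseteq C^*(X)\subseteq A(X)$, whence $\eta_A(C)=\eta_A(X)\cap\bigcap_g\mathcal{M}_g$ is closed in $\eta_A(X)$. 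Density follows from the hull-kernel closure formula already recorded in the excerpt: $\overline{\eta_A(X)}=\{M:M\supseteq\bigcap_{x\in X}M_A^x\}=\{M:M\supseteq\{0\}\}=\mathcal{M}(A(X))$.

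For (c), compactness of the hull-kernel topology on $\mathcal{M}(A(X))$ is a standard ring-theoretic fact: any family of basic closed sets $\{\mathcal{M}_{g_\alpha}\}$ with the finite intersection property spans a proper ideal, which sits in a common maximal ideal by Zorn. Hausdorffness is where the $A(X)$-theory genuinely enters. Given distinct $M_1,M_2\in\mathcal{M}(A(X))$, I would combine $M_1+M_2=A(X)$ with absolute convexity of prime ideals (Theorem~\ref{rw1997result}\ref{rw1997resultpartb}) and bounded inversion in $C^*(X)\subseteq A(X)$ to manufacture bounded $g_1\in M_1\setminus M_2$ and $g_2\in M_2\setminus M_1$ with $g_1g_2=0$ pointwise: start from $a+b=1$ with $a\in M_1$, $b\in M_2$, renormalize to $a^2/(a^2+b^2)$ and $b^2/(a^2+b^2)$ (whose existence in $A(X)$ uses $a^2+b^2\geq 1/2$), then apply the truncation $t\mapsto\max(2t-1,0)$ to enforce disjoint supports, keeping the truncations in the respective ideals by absolute convexity. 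Primality of maximal ideals then makes $\{M:g_1\notin M\}$ and $\{M:g_2\notin M\}$ disjoint basic open sets separating $M_2$ and $M_1$.

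The main obstacle is (d). Given a compact Hausdorff $Y$ and continuous $f:X\to Y$, the pullback $f^{\#}(h):=h\circ f$ defines a ring homomorphism $f^{\#}:C(Y)\to C^*(X)\subseteq A(X)$. For each $M\in\mathcal{M}(A(X))$ the preimage $(f^{\#})^{-1}(M)$ is a prime ideal of $C(Y)$ (since $A(X)/M$ is a field), and since $Y$ is compact Hausdorff every prime ideal of $C(Y)$ lies in a unique fixed maximal ideal $N_y=\{h\in C(Y):h(y)=0\}$; I define $f^\eta(M):=y$. Specialized to $M=M_A^x$ this yields $(f^{\#})^{-1}(M_A^x)=N_{f(x)}$ and hence $f^\eta\circ\eta_A=f$. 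For continuity, if $V$ is a neighborhood of $y_0=f^\eta(M_0)$ in $Y$, complete regularity of $Y$ furnishes $h\in C(Y)$ with $h(y_0)=1$ and $\mathrm{coz}(h)\subseteq V$; then $h\notin N_{y_0}\supseteq(f^{\#})^{-1}(M_0)$ forces $h\circ f\notin M_0$, and the basic open set $\{M:h\circ f\notin M\}$ is a neighborhood of $M_0$ mapping into $V$ by the same reasoning applied to each of its members. With (a)--(d) in hand, the extension-property characterization of $\beta X$ delivers the desired topological equivalence.
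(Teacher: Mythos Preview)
Your architecture coincides with the paper's: parts (a)--(c) flesh out what the paper simply cites from \cite{RW1997}, and in (d) you define the extension exactly as the paper does, via the prime ideal $\hat M:=(f^{\#})^{-1}(M)$ of $C(Y)$ and the unique $y$ with $\hat M\subseteq N_y$. The gap is in the continuity step. From $h\circ f\notin M$ you obtain $h\notin\hat M$, but to conclude $f^\eta(M)\in V$ you would need $y\in\mathrm{coz}(h)$, i.e.\ $h\notin N_y$; since the inclusion $\hat M\subseteq N_y$ may be strict, $h\notin\hat M$ does not imply $h\notin N_y$. Your phrase ``by the same reasoning'' tries to reverse the only implication you actually have. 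Concretely, let $X=\mathbb{N}$, $A(X)=C(\mathbb{N})$, $Y=[0,1]$, and let $f$ enumerate the rationals in $[0,1]$; with $V=[0,\tfrac12)$ and $h(t)=\max(1-2t,0)$ one has $h(0)=1$ and $\mathrm{coz}(h)=V$. If $M$ is the maximal ideal attached to a free ultrafilter along which $f(n)\to\tfrac12$ from the left, then $h(f(n))>0$ on that set, so $h\circ f\notin M$, yet $f^\eta(M)=\tfrac12\notin V$.

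The paper closes precisely this gap by using \emph{two} functions rather than one: it picks $g_1,g_2\in C(Y)$ with $y_0\in Y\setminus Z(g_1)\subseteq Z(g_2)\subseteq W$, so that $g_1g_2=0$. Then for any $N$ with $g_1\circ f\notin N$ one has $g_1\notin\hat N$, and primality of $\hat N$ forces $g_2\in\hat N$, whence $f^{\beta_A}(N)\in Z(g_2)\subseteq W$. An equivalent one-function repair of your argument is to demand $\overline{\mathrm{coz}(h)}\subseteq V$: then for every $y\notin V$ the function $h$ vanishes on a neighbourhood of $y$, hence $h\in O_y\subseteq\hat M$ for any prime $\hat M\subseteq N_y$, and the contrapositive gives $f^\eta(M)\in V$. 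As written, with only $\mathrm{coz}(h)\subseteq V$, the step fails.
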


\begin{proof}
Since $X$ is Tychonoff, $\eta_A$ is one-to-one. Also $cl_{\mathcal{M}(A(X))}\eta_A(X)=\{M\in\mathcal{M}(A(X)): M\supseteq \bigcap_{x\in X}M^x_A\}=\{M\in \mathcal{M}(A(X)):M\supseteq \{0\}\}=\mathcal{M}(A(X))$. It follows from a result proved in \cite{RW1997} that $\mathcal{M}(A(X))$ is a compact Hausdorff space and $\eta_A$ is an embedding. Thus $(\eta_A, \mathcal{M}(A(X)))$ is a compactification of $X$. To prove that this compactification of $X$ possesses the extension property we take a compact Hausdorff space $Y$ and a continuous map $f:X\rightarrow Y$. It suffices to define a continuous map $f^{\beta_A}:\mathcal{M}(A(X))\rightarrow Y$ with the property that $f^{\beta_A}\circ \eta_A =f$. Let $M$ be any member of $\mathcal{M}(A(X))$ i.e. $M$ is a maximal ideal of the ring $A(X)$. Define $\hat{M}=\{g\in C(Y): g\circ f\in M\}$. Note that if $g\in C(Y)$ then $g\circ f\in C(X)$. Further note that since $Y$ is compact and $g\in C(Y)$, $g$ is bounded i.e. $g(Y)$ is a bounded subset of $\mathbb{R}$. It follows that $(g\circ f)(X)$ is a bounded subset of $\mathbb{R}$ and hence $g\circ f\in C^*(X)$. Consequently $g\circ f\in A(X)$. Thus the definition of $\hat{M}$ is without any ambiguity. It is easy to see that $\hat{M}$ is an ideal of $C(Y)$. It follows, since $M$ is a maximal ideal and therefore a prime ideal of $A(X)$, that $\hat{M}$ is a prime ideal of $C(Y)$. Since $C(Y)$ is a Gelfand ring, $\hat{M}$ can be extended to a unique maximal ideal $N$ in $C(Y)$. Since $Y$ is compact, $N$ is fixed (see \cite[Theorem 4.11]{GJ}). Thus we can write: $N=N_y=\{g\in C(Y): g(y)=0\}$ for some $y\in Y$. We observe that $y\in \bigcap_{g\in \hat{M}}Z(g)$. Indeed $\bigcap_{g\in \hat{M}}Z(g)=\{y\}$ for if $y_1, y_2\in \bigcap_{g\in \hat{M}}Z(g)$, for $y_1\neq y_2$, then $\hat{M}\subseteq N_{y_1}$ and $\hat{M}\subseteq N_{y_2}$ which is impossible as $N_{y_1}\neq N_{y_2}$ and $C(Y)$ is a Gelfand ring.  We then set $f^{\beta_A}(M)=y$. Note that $\{f^{\beta_A}(M)\}=\bigcap_{g\in\hat{M}}Z(g)$. Thus $f^{\beta_A}:\mathcal{M}(A(X))\rightarrow Y$ is a well defined map. Now choose $x\in X$ and then $g\in \hat{M^x_A}$; then $g\circ f\in M^x_A$, which implies that $(g\circ f)(x)=0$. Consequently $f(x)\in Z(g)$ for each $g\in \hat{M^x_A}$. On the other hand $\{f^{\beta_A}(M^x_A)\}=\bigcap_{g\in \hat{M^x_A}}Z(g)$. This implies that $f^{\beta_A}(M^x_A)=f(x)$; in other words $(f^{\beta_A}\circ \eta_A)(x)=f(x)$ and this relation is true for each $x\in X$. Hence $f^{\beta_A}\circ \eta_A=f$.

Now towards the proof of the continuity of the map $f^{\beta_A}$, choose $M\in \mathcal{M}(A(X))$ and a neighbourhood $W$ of $f^{\beta_A}(M)$ in the space $Y$. In a Tychonoff space, every neighbourhood of a point $x$ contains a zero set neighbourhood of $x$, which contains a co-zero set neighbourhood of $x$. So there exist some $g_1,g_2\in C(Y)$, such that $f^{\beta_A}(M)\in Y\setminus Z(g_1)\subseteq Z(g_2)\subseteq W$. It follows that $g_1g_2=0$ as $Z(g_1)\cup Z(g_2)=Y$ which means that $Z(g_1g_2)=Y$. Furthermore $f^{\beta_A}(M)\notin Z(g_1)$. Since $\{f^{\beta_A}(M)\}=\bigcap_{g\in \hat{M}}Z(g)$, as observed earlier, we then have $g_1\notin \hat{M}$. This means that $g_1\circ f\notin M$. In other words $M\in \mathcal{M}(A(X))\setminus \mathcal{M}_{g_1\circ f}$, which is an open neighbourhood of $M$  in $\mathcal{M}(A(X))$. We shall check that $f^{\beta_A}(\mathcal{M}(A(X))\setminus \mathcal{M}_{g_1\circ f})\subseteq W$ and that settles the continuity of $f^{\beta_A}$ at $M$. Towards that end, choose a maximal ideal $N\in \mathcal{M}(A(X))\setminus \mathcal{M}_{g_1\circ f}$. This means that $N\notin \mathcal{M}_{g_1\circ f}$, i.e.\ $g_1\circ f\notin N$. Thus $g_1\notin \hat{N}$. But as $g_1g_2=0$ and $\hat{N}$ is prime ideal in $C(Y)$, it must be that $g_2\in \hat{N}$. Since $\{f^{\beta_A}(N)\}=\bigcap_{g\in \hat{N}}Z(g)$, it follows that $f^{\beta_A}(N)\in Z(g_2)\subseteq W$.
\end{proof}

To achieve the complex analogue of the above mentioned theorem, we need to prove the following proposition, which is by itself a result of independent interest.

\begin{theorem}\label{thm:homeomorphismstructurespaces}
Let $A(X)\in \Sigma (X)$. Then the map $\psi_A: \mathcal{M}([A(X)]_c)\rightarrow \mathcal{M}(A(X))$ mapping $M\rightarrow M\cap A(X)$ is a homeomorphism from the structure space of $[A(X)]_c$ onto the structure space of $A(X)$.
\end{theorem}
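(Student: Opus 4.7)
The plan is to verify that $\psi_A$ is a well-defined bijection that is both continuous and closed; this suffices for it to be a homeomorphism, without having to invoke any compactness/Hausdorffness of $\mathcal{M}([A(X)]_c)$ which the paper has not yet established independently.

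\emph{Well-definedness and bijectivity.} Given $M \in \mathcal{M}([A(X)]_c)$, Theorem~\ref{thm:allmaxidealsinPXC} (applied with $P(X,\mathbb{C}) = [A(X)]_c$) yields a unique maximal ideal $N$ of $A(X)$ with $M = N_c$, and Theorem~\ref{thm:IdealExtensionTightness} gives $M \cap A(X) = N_c \cap A(X) = N$. Hence $\psi_A$ is a well-defined bijection with inverse $N \mapsto N_c$.

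\emph{Continuity of $\psi_A$.} For a subbasic closed set $\mathcal{M}(A(X))_a$ with $a \in A(X)$, using $A(X) \subseteq [A(X)]_c$,
\[
\psi_A^{-1}\bigl(\mathcal{M}(A(X))_a\bigr) = \{M \in \mathcal{M}([A(X)]_c) : a \in M \cap A(X)\} = \mathcal{M}([A(X)]_c)_a,
\]
which is closed in the structure space of $[A(X)]_c$.

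\emph{Closedness of $\psi_A$.} Fix $h \in [A(X)]_c$. The formula $[A(X)]_c = \{h \in C(X,\mathbb{C}) : |h| \in A(X)\}$ (proved just after Theorem~\ref{thm:conjugateclosedcrit}) guarantees $|h| \in A(X)$. Maximal ideals of $A(X)$ are absolutely convex by Theorem~\ref{rw1997result}\ref{rw1997resultpartb}, so by Theorem~\ref{thm:convexextrest}\ref{thm:convexextrest:item2} their extensions $N_c$ are absolutely convex in $[A(X)]_c$. Theorem~\ref{thm:absconv} then yields $h \in N_c$ if and only if $|h| \in N$, and hence
\[
\psi_A\bigl(\mathcal{M}([A(X)]_c)_h\bigr) = \{N \in \mathcal{M}(A(X)) : |h| \in N\} = \mathcal{M}(A(X))_{|h|},
\]
which is subbasic closed. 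Since $\psi_A$ is a bijection it preserves arbitrary intersections, so it sends every closed set (an intersection of subbasic closed sets) to a closed set.

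The step demanding the most care is the closedness of $\psi_A$ (equivalently, continuity of $\psi_A^{-1}$): because Hausdorffness of $\mathcal{M}([A(X)]_c)$ has not yet been established independently, the familiar ``continuous bijection between compact Hausdorff spaces'' shortcut is unavailable. The decisive device is to replace the complex element $h$ by the real element $|h| \in A(X)$ via the absolute-convexity formula for $N_c$ in Theorem~\ref{thm:absconv}; this is precisely what lets a subbasic closed set in $\mathcal{M}([A(X)]_c)$ be identified, under $\psi_A$, with a subbasic closed set in $\mathcal{M}(A(X))$.
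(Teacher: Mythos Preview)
Your proof is correct and follows essentially the same route as the paper: bijectivity via Theorem~\ref{thm:allmaxidealsinPXC} and Theorem~\ref{thm:IdealExtensionTightness}, closedness via the key identity $\psi_A\bigl(\mathcal{M}([A(X)]_c)_h\bigr)=\mathcal{M}(A(X))_{|h|}$ obtained from Theorem~\ref{thm:absconv}, and continuity via $\psi_A^{-1}\bigl(\mathcal{M}(A(X))_a\bigr)=\mathcal{M}([A(X)]_c)_a$. Your continuity step is in fact a touch more direct than the paper's (which derives it from the closedness identity by noting $\mathcal{M}(A(X))_g=\mathcal{M}(A(X))_{|g|}$), and your detour through Theorem~\ref{thm:convexextrest}\ref{thm:convexextrest:item2} is harmless but unnecessary since Theorem~\ref{thm:absconv} already applies once $N$ is absolutely convex.
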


\begin{proof}
That the above map $\psi_A$ is a bijection between the structure spaces of $[A(X)]_c$ and $A(X)$ follows from Theorems~\ref{thm:conjugateclosedcrit},~\ref{thm:IdealExtensionTightness},~\ref{thm:idealorderandstrictness}, and~\ref{thm:allmaxidealsinPXC}.
Recall (same notation as before) that $\mathcal{M}([A(X)]_c)_f$ is the set of maximal ideals in the ring $[A(X)]_c$ containing the function $f\in [A(X)]_c$. A typical basic closed set in the structure space $\mathcal{M}([A(X)]_c)$ is given by $\mathcal{M}([A(X)]_c)_h$ where $h\in [A(X)]_c$. Note that $\mathcal{M}([A(X)]_c)_h=\{J\in \mathcal{M}([A(X)]_c):h\in J\}$. So for $h\in [A(X)]_c$, $J\in \mathcal{M}([A(X)]_c)_h$ if and only if $h\in J$, and this is true in view of Theorem~\ref{thm:absconv} and the absolute convexity of maximal ideals (see Theorem~\ref{rw1997result}\ref{rw1997resultpartb} of the present article) if and only if $\lvert h\rvert\in J\cap A(X)$, and this holds when and only when $J\cap A(X)\in \mathcal{M}(A(X))_{\lvert h\rvert}$, which is a basic closed set in the structure space $\mathcal{M}(A(X))$ of the ring $A(X)$. Thus
\begin{equation}\label{eq:homeomorphismproof}
\psi_A[\mathcal{M}([A(X)]_c)_h]=\mathcal{M}(A(X))_{\lvert h\rvert}
\end{equation}
Therefore $\psi_A$ carries a basic closed set in the domain space onto a basic closed set in the range space. Now for a maximal ideal $N$ in $A(X)$ and a function $g\in A(X), g$ belongs to $N$ if and only if $\lvert g\rvert\in N$, because of the absolutely convexity of a maximal ideal in an intermediate ring. Consequently $\mathcal{M}(A(X))_g=\mathcal{M}(A(X))_{\lvert g\rvert}$ for any $g\in A(X)$. Hence from relation~\eqref{eq:homeomorphismproof}, we get: $\psi_A[\mathcal{M}([A(X)]_c)_g]=\mathcal{M}(A(X))_g$ which implies that $\psi_A^{-1}[\mathcal{M}(A(X))_g]=\mathcal{M}([A(X)]_c)_g$. Thus $\psi_A^{-1}$ carries a basic closed set in the structure space $\mathcal{M}(A(X))$ onto a basic closed in the structure space $\mathcal{M}([A(X)]_c)$.  Altogether $\psi_A$ becomes a homeomorphism.

For any $x\in X$ and $A(X)\in \Sigma (X)$, set $M_{A[C]}^x=\{h\in [A(X)]_c: h(x)=0\}$. It is easy to check by using standard arguments, such as those employed to prove the textbook theorem \cite[Theorem 4.1]{GJ}, that $M^x_{A[C]}$ is a fixed maximal in $[A(X)]_c$ and $M^x_{A[C]}\cap A(X)=M^x_A =\{g\in A(X): g(x)=0\}$. Let $\zeta : X\mapsto \mathcal{M}([A(X)]_c)$ be the map defined by: $\zeta_A (x)=M^x_{A[C]}$. Then we have the following results.
\end{proof}

\begin{theorem}
$(\zeta_A , \mathcal{M}([A(X)]_c))$ is a Hausdorff compactification of $X$. Furthermore $(\psi_A\circ \zeta_A)(x)=\eta_A (x)$ for all $x$ in $X$. Hence $(\zeta_A, \mathcal{M}([A(X)]_c))$ is topologically equivalent to the Hausdorff compactification $(\eta_A, \mathcal{M}(A(X)))$ as considered in Theorem~\ref{thm:compactificationconstruction}. Consequently $(\zeta_A, \mathcal{M}([A(X)]_c))$ turns out to be topologically equivalent to the Stone-\v{C}ech compactification $\beta X$ of $X$.
\end{theorem}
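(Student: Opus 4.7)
The strategy is to piggy-back on everything already established about $(\eta_A,\mathcal{M}(A(X)))$ in Theorem~\ref{thm:compactificationconstruction} by using the homeomorphism $\psi_A$ from the previous Theorem~\ref{thm:homeomorphismstructurespaces}. The plan is to first verify the intertwining relation $\psi_A\circ\zeta_A=\eta_A$, which is essentially a definition check; then observe that because $\psi_A$ is a homeomorphism, all the relevant structure (embedding, density, Hausdorffness, compactness, extension property) transfers from $(\eta_A,\mathcal{M}(A(X)))$ to $(\zeta_A,\mathcal{M}([A(X)]_c))$ at no additional cost.

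First, I would check $\psi_A\circ\zeta_A=\eta_A$ pointwise. Fix $x\in X$. By definition $\zeta_A(x)=M^x_{A[C]}=\{h\in[A(X)]_c:h(x)=0\}$. The remark just before the statement of the theorem records that $M^x_{A[C]}\cap A(X)=M^x_A$. Since $\psi_A$ is the restriction map $M\mapsto M\cap A(X)$, we get $\psi_A(\zeta_A(x))=M^x_{A[C]}\cap A(X)=M^x_A=\eta_A(x)$, as required.

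Next, I would derive that $\zeta_A$ is a (Hausdorff) compactification. From $\psi_A\circ\zeta_A=\eta_A$ and the fact that $\psi_A$ is a homeomorphism, we obtain $\zeta_A=\psi_A^{-1}\circ\eta_A$. Theorem~\ref{thm:compactificationconstruction} already guarantees that $\eta_A$ is an injective topological embedding with $\eta_A(X)$ dense in the compact Hausdorff space $\mathcal{M}(A(X))$. Composing an embedding with a homeomorphism preserves each of these properties, so $\zeta_A$ is an injective topological embedding whose image $\zeta_A(X)=\psi_A^{-1}(\eta_A(X))$ is dense in the compact Hausdorff space $\mathcal{M}([A(X)]_c)$. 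This establishes that $(\zeta_A,\mathcal{M}([A(X)]_c))$ is a compactification of $X$.

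Finally, the topological equivalence to $(\eta_A,\mathcal{M}(A(X)))$ is witnessed by the very map $\psi_A$: it is a homeomorphism $\mathcal{M}([A(X)]_c)\to\mathcal{M}(A(X))$ satisfying $\psi_A\circ\zeta_A=\eta_A$, which is precisely the definition of topological equivalence of compactifications given in the paper. Because equivalence of compactifications is transitive, and Theorem~\ref{thm:compactificationconstruction} identifies $(\eta_A,\mathcal{M}(A(X)))$ with the Stone-\v{C}ech compactification $\beta X$, we conclude that $(\zeta_A,\mathcal{M}([A(X)]_c))$ is topologically equivalent to $\beta X$. I do not anticipate any real obstacle here; the only subtle point is making sure one is using the correct commutative triangle, but that is settled by the definitional computation $M^x_{A[C]}\cap A(X)=M^x_A$ already recorded in the paragraph preceding the theorem.
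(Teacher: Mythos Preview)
Your proof is correct and uses the same ingredients as the paper: the homeomorphism $\psi_A$ from Theorem~\ref{thm:homeomorphismstructurespaces}, the identity $M^x_{A[C]}\cap A(X)=M^x_A$, and the already-established compactification $(\eta_A,\mathcal{M}(A(X)))$. The only minor difference is one of organization: the paper first transfers Hausdorffness via $\psi_A$ and then re-verifies the compactification properties (injectivity, density, embedding) for $\zeta_A$ by mimicking the opening of the proof of Theorem~\ref{thm:compactificationconstruction}, whereas you establish the intertwining $\psi_A\circ\zeta_A=\eta_A$ first and then pull \emph{all} the compactification properties back through $\psi_A^{-1}$ at once---a slightly more economical route that avoids repeating any of the direct verifications.
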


\begin{proof}
Since $\mathcal{M}(A(X)$ is Hausdorff \cite{RW1997}, it follows from Theorem~\ref{thm:homeomorphismstructurespaces} that $\mathcal{M}([A(X)]_c)$ is a Hausdorff space. Now by following closely the arguments made at the very beginning of the proof of Theorem~\ref{thm:compactificationconstruction}, one can easily see that $(\zeta_A,\mathcal{M}([A(X)]_c))$ is a Hausdorff compactification of $X$. The second part of the theorem is already realised in Theorem \ref{thm:homeomorphismstructurespaces}. The third part of the present theorem also follows from Theorem~\ref{thm:homeomorphismstructurespaces}.
\end{proof}

\begin{definition}
An intermediate ring $A(X)\in \Sigma (X)$ is called $C$-type in \cite{DO}, if it is isomorphic to $C(Y)$ for some Tychonoff space $Y$.
\end{definition}

In \cite{DO}, the authors have shown that if $I$ is an ideal of the ring $C(X)$, then the linear sum $C^*(X)+I$ is a $C$-type ring and of course $C^*(X)+I\in \Sigma (X)$. Recently the authors in \cite{ABBR} have realised that these are the only $C$-type intermediate rings of real-valued continuous functions on $X$ if and only if $X$ is pseudocompact. We now show that the complex analogous of all these results are also true. We reproduce the following result established in \cite{DA}, which will be needed for this purpose.

\begin{theorem}\label{thm:CringPastResult}
A ring $A(X)\in \Sigma (X)$ is $C$-type if and only if $A(X)$ is isomorphic to the ring $C(\upsilon_A X)$, where $\upsilon_AX=\{p\in \beta X: f^*(p)\in \mathbb{R}$ for each $f\in A(X)\}$ and $f^* :\beta X\mapsto \mathbb{R}\cup \{\infty \}$ is the Stone extension of the function $f$.
\end{theorem}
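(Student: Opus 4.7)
The plan is as follows. The $(\Leftarrow)$ direction is immediate: since $\upsilon_A X$ is a subspace of the compact Hausdorff space $\beta X$, it is Tychonoff, so any ring isomorphic to $C(\upsilon_A X)$ is $C$-type by definition.

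For $(\Rightarrow)$, suppose there is a ring isomorphism $\psi : A(X) \to C(Y)$ for some Tychonoff space $Y$. Without loss of generality I may assume $Y$ is realcompact, since otherwise I replace $Y$ with its Hewitt realcompactification $\upsilon Y$ and use the classical identification $C(Y)\cong C(\upsilon Y)$. The strategy is then to produce a homeomorphism $\upsilon_A X \cong Y$; combined with the ring isomorphism this yields $A(X)\cong C(Y)\cong C(\upsilon_A X)$.

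The key technical step is a purely ring-theoretic characterization of $\upsilon_A X$ inside $\beta X$: for each $p\in \beta X$, let $M^p$ denote the corresponding maximal ideal of $A(X)$ under the identification $\beta X \cong \mathcal{M}(A(X))$ established in Theorem~\ref{thm:compactificationconstruction}. I claim that $A(X)/M^p \cong \mathbb{R}$ if and only if $p\in \upsilon_A X$. One direction is easy: if $p\in \upsilon_A X$, then $f\mapsto f^*(p)$ is a well-defined $\mathbb{R}$-algebra homomorphism $A(X)\to \mathbb{R}$ whose kernel is exactly $M^p$, so $A(X)/M^p\cong \mathbb{R}$. For the reverse, if $p\notin \upsilon_A X$, pick $f\in A(X)$ with $|f^*(p)|=\infty$; by the absolute convexity of maximal ideals (Theorem~\ref{rw1997result}\ref{rw1997resultpartb}), $A(X)/M^p$ inherits a compatible total order under which $f+M^p$ exceeds every real number, so the quotient is non-Archimedean and strictly larger than $\mathbb{R}$.

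With this characterization in hand, the isomorphism $\psi$ induces a homeomorphism of structure spaces $\widehat{\psi}:\beta X\to \beta Y$ that preserves residue fields, and hence restricts to a homeomorphism between the subsets of points whose residue field is $\mathbb{R}$. On the $A(X)$-side this subset is $\upsilon_A X$; on the $C(Y)$-side it is $\upsilon Y = Y$ by realcompactness. Composing gives the desired ring isomorphism. The principal obstacle is the second half of the residue-field characterization above: translating the analytic statement $|f^*(p)|=\infty$ into the algebraic assertion that $A(X)/M^p$ fails to be Archimedean over $\mathbb{R}$ requires careful manipulation of the natural total order on the quotient (existence of which uses the primeness of $M^p$) together with the absolute convexity property for intermediate rings.
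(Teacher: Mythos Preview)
The paper does not supply a proof of this theorem: it is quoted verbatim from \cite{DA} (``We reproduce the following result established in \cite{DA}''), so there is no in-paper argument to compare against. Your sketch is therefore being measured against the literature rather than against anything in this article.

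That said, your outline follows the standard route for results of this type and is sound in structure. The reduction to realcompact $Y$, the use of the structure-space homeomorphism induced by a ring isomorphism, and the identification of $\upsilon_A X$ as the set of ``real'' maximal ideals are exactly the ingredients one expects. One point deserves more care: you assert that for $p\in\upsilon_A X$ the kernel of $f\mapsto f^*(p)$ is \emph{the} maximal ideal $M^p$ coming from the homeomorphism $\beta X\cong\mathcal{M}(A(X))$ of Theorem~\ref{thm:compactificationconstruction}. That homeomorphism is constructed abstractly via the extension property, so matching it with the concrete evaluation kernel requires either invoking the explicit Redlin--Watson/Plank description of the maximal ideals of $A(X)$ or a short density argument (both maps agree on the dense image of $X$ and are continuous into a Hausdorff space). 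Similarly, in the non-Archimedean step you should make explicit that the total order on $A(X)/M^p$ exists because $M^p$ is absolutely convex and every coset contains a function of constant sign on a suitable neighbourhood---you flag this as the principal obstacle, which is accurate, but the sketch stops just short of the actual verification. None of these are genuine gaps; they are places where a reader would want one or two more lines.
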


We extend the notion of $C$-type ring to rings of complex-valued continuous functions: a ring $P(X,\mathbb{C})\in \Sigma (X,\mathbb{C})$ is a \emph{$C$-type ring} if it is isomorphic to a ring $C(Y)$ for some Tychonoff space $Y$.
The following proposition comes of quite naturally.

\begin{theorem}\label{thm:real2complexCtype}
Suppose $A(X)\in \Sigma (X)$ is a $C$-type intermediate ring of real-valued continuous functions on $X$. Then $[A(X)]_c$ is a $C$-type intermediate rings of complex-valued continuous functions on $X$.
\end{theorem}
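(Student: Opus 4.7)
The plan is to apply Theorem~\ref{thm:CringPastResult} to replace the abstract hypothesis ``$C$-type'' with a concrete presentation, and then complexify the resulting isomorphism termwise. By that theorem, the hypothesis that $A(X)$ is $C$-type in $\Sigma(X)$ gives a ring isomorphism $\phi : A(X) \to C(\upsilon_A X)$. I would set $Y = \upsilon_A X$, which is Tychonoff (it is a subspace of $\beta X$), and define a candidate map
\[
\Phi : [A(X)]_c \longrightarrow C(Y,\mathbb{C}), \qquad \Phi(f + ig) = \phi(f) + i\,\phi(g), \quad f,g \in A(X).
\]
The goal is to verify that $\Phi$ is a ring isomorphism, which would exhibit $[A(X)]_c$ as a $C$-type ring in $\Sigma(X,\mathbb{C})$.

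The first step is well-definedness. Each $h \in [A(X)]_c$ has a \emph{unique} decomposition into real and imaginary parts $h = f + ig$ with $f,g \in C(X)$, and by the proof of Theorem~\ref{thm:conjugateclosedcrit} both $f$ and $g$ automatically lie in $A(X) = [A(X)]_c \cap C(X)$; hence $\Phi(h)$ is unambiguous. Next, $\Phi$ is a ring homomorphism: additivity is immediate, and for multiplication one expands
\[
(f_1 + ig_1)(f_2 + ig_2) = (f_1 f_2 - g_1 g_2) + i(f_1 g_2 + f_2 g_1)
\]
and uses the corresponding identity in $C(Y,\mathbb{C})$ together with the fact that $\phi$ is a ring homomorphism on $A(X)$; $\Phi$ also sends the unit to the unit.

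For bijectivity, injectivity of $\Phi$ reduces to injectivity of $\phi$ by comparing real and imaginary parts in $C(Y,\mathbb{C})$: if $\phi(f) + i\phi(g) = 0$ then $\phi(f) = \phi(g) = 0$, whence $f = g = 0$. Surjectivity is equally quick: given $u + iv \in C(Y,\mathbb{C})$ with $u,v \in C(Y)$, the surjectivity of $\phi$ produces $f,g \in A(X)$ with $\phi(f) = u$, $\phi(g) = v$, so $\Phi(f + ig) = u + iv$. Hence $[A(X)]_c \cong C(Y,\mathbb{C})$, proving that $[A(X)]_c$ is $C$-type.

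No step here poses a genuine obstacle; the argument is essentially the observation that complexification is functorial. The one place to stay alert is the well-definedness of $\Phi$: it rests crucially on the fact, established in Theorem~\ref{thm:conjugateclosedcrit}, that both real and imaginary parts of any element of $[A(X)]_c$ already lie in $A(X)$, which is what allows $\phi$ to be applied component-wise without ambiguity.
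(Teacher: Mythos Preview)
Your proof is correct and follows essentially the same approach as the paper: invoke Theorem~\ref{thm:CringPastResult} to obtain an isomorphism $\phi:A(X)\to C(\upsilon_A X)$ and then complexify it termwise via $\Phi(f+ig)=\phi(f)+i\phi(g)$. The paper simply asserts that this map is an isomorphism without spelling out the well-definedness, homomorphism, and bijectivity checks that you have supplied.
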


\begin{proof}
Since $A(X)$ is a $C$-type intermediate ring by Theorem~\ref{thm:CringPastResult}, there exists an isomorphism $\psi :A(X)\mapsto C(\upsilon_A X)$. Let $\hat{\psi}:[A(X)]_c\mapsto C(\upsilon_AX,\mathbb{C})$ be defined as follows: $\hat{\psi}(f+ig)=\psi (f)+i\psi (g)$, where $f,g\in A(X)$. It is not hard to check that $\hat{\psi}$ is an isomorphism on $[A(X)]_c$ onto $C(\upsilon_AX,\mathbb{C})$.
\end{proof}

\begin{theorem}
Let $I$ be a $z$-ideal in $C(X,\mathbb{C})$. Then $C^*(X,\mathbb{C})+I$ is a $C$-type intermediate ring of complex-valued continuous functions on $X$. Furthermore these are the only $C$-type rings lying between $C^*(X,\mathbb{C})$ and $C(X,\mathbb{C})$ if and only if $X$ is pseudocompact.
\end{theorem}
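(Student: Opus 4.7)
The plan for the first assertion is to pull $I$ back to the real side. By Theorem~\ref{thm:charzidealsinPXC}, a $z$-ideal $I$ of $C(X,\mathbb{C})$ must have the form $I=J_c$ for some $z$-ideal $J$ of $C(X)$. An immediate verification shows that $[A]_c+[B]_c=[A+B]_c$ for any additive subgroups $A,B\subseteq C(X)$ containing $0$, so taking $A=C^*(X)$ and $B=J$ yields the identity
\[
C^*(X,\mathbb{C})+I=[C^*(X)]_c+J_c=[C^*(X)+J]_c.
\]
The real result from \cite{DO} asserts that $C^*(X)+J$ is a $C$-type member of $\Sigma(X)$, and Theorem~\ref{thm:real2complexCtype} upgrades this to the statement that $[C^*(X)+J]_c$ is $C$-type in $\Sigma(X,\mathbb{C})$. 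That settles the first part.

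For the biconditional, the easy direction is immediate: pseudocompactness of $X$ means $C(X)=C^*(X)$, whence $C(X,\mathbb{C})=C^*(X,\mathbb{C})$, so $\Sigma(X,\mathbb{C})=\{C^*(X,\mathbb{C})\}=\{C^*(X,\mathbb{C})+(0)\}$, and $(0)$ is a $z$-ideal of $C(X,\mathbb{C})$. For the converse I would argue contrapositively. If $X$ is not pseudocompact, then the real analogue established in \cite{ABBR} produces a $C$-type intermediate ring $A(X)\in\Sigma(X)$ that is not of the form $C^*(X)+J$ for any ideal $J$ of $C(X)$. By Theorem~\ref{thm:real2complexCtype}, $[A(X)]_c$ is a $C$-type member of $\Sigma(X,\mathbb{C})$. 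Were $[A(X)]_c=C^*(X,\mathbb{C})+I$ to hold for some $z$-ideal $I$ of $C(X,\mathbb{C})$, writing $I=J_c$ via Theorem~\ref{thm:charzidealsinPXC} and using the identity above would give $[A(X)]_c=[C^*(X)+J]_c$; intersecting with $C(X)$ and invoking $[B]_c\cap C(X)=B$ then forces $A(X)=C^*(X)+J$, contradicting the choice of $A(X)$.

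The principal obstacle, and essentially the only place where care is needed, is the intersection computation $(C^*(X,\mathbb{C})+J_c)\cap C(X)=C^*(X)+J$. This is exactly the step at which the $z$-ideal hypothesis earns its keep: the conjugate-closedness of $I=J_c$, packaged in Theorem~\ref{thm:charzidealsinPXC}, is what allows a clean descent to the real side. Beyond this bookkeeping, the argument is purely formal, transferring the real-valued results of \cite{DO} and \cite{ABBR} through Theorem~\ref{thm:real2complexCtype}; no genuinely new complex-analytic input is required.
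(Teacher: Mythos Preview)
Your proof is correct and follows essentially the same route as the paper: both reduce to the real-valued results of \cite{DO} and \cite{ABBR} via Theorem~\ref{thm:real2complexCtype} by establishing the identity $C^*(X,\mathbb{C})+I=[C^*(X)+J]_c$ with $J=I\cap C(X)$, and both use the $z$-ideal hypothesis precisely to ensure $I=J_c$. The only cosmetic difference is that the paper verifies this identity by a direct element-by-element computation invoking absolute convexity of $I$, whereas you factor it through the clean general observation $[A]_c+[B]_c=[A+B]_c$ together with Theorem~\ref{thm:charzidealsinPXC}; your packaging is arguably tidier but the content is the same.
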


\begin{proof}
As mentioned above, it is proved in \cite{DO} that for any ideal $J$ in $C(X)$, $C^*(X)+J$ is a $C$-type intermediate ring of real-valued continuous functions on $X$.
In light of this and Theorem~\ref{thm:real2complexCtype}, it is sufficient to prove for the first part of this theorem that $C^*(X,\mathbb{C})+I=[C^*(X)+I\cap C(X)]_c$. Towards proving that, let $f,g\in C^*(X)+I\cap C(X)$.
We can write $g=g_1+g_2$ where $g_1\in C^*(X)$ and $g_2\in I\cap C(X)$. It follows that $ig_1\in C^*(X,\mathbb{C})$ and $ig_2\in I$ and this implies that $i(g_1+g_2)\in C^*(X,\mathbb{C})+I$. Thus $f+ig\in C^*(X)+I$. Hence $[C^*(X)+I\cap C(X)]_c\subseteq C^*(X,\mathbb{C})+I$.
To prove the reverse inclusion relation, let $h_1+h_2\in C^*(X,\mathbb{C})+I$, where $h_1\in C^*(X,\mathbb{C})$ and $h_2\in I$. We can write $h_1=f_1+ig_1, h_2=f_2+ig_2$, where $f_1,f_2, g_1,g_2\in C(X)$.
Since $h_1\in C^*(X,\mathbb{C})$, it follows that $f_1,g_1\in C^*(X)$. Thus $\lvert f_2\rvert\leq \lvert h_2\rvert$ and $h_2\in I$.
This implies, because of the absolute convexity of the $z$-ideal $I$ in $C(X,\mathbb{C})$, that $f_2\in I$.
Analogously $g_2\in I$. It is now clear that $f_1+f_2\in C^*(X)+I\cap C(X)$ and $g_1+g_2\in C^*(X)+I\cap C(X)$. Thus $h_1+h_2=(f_1+f_2)+i(g_1+g_2)\in [C^*(X)+I\cap C(X)]_c$. Hence $C^*(X,\mathbb{C})+I\subseteq [C^*(X)+I\cap C(X)]_c$.

To prove the second part of the theorem, we first observe that if $X$ is pseudocompact, then there is practically nothing to prove. Assume therefore that $X$ is not pseudocompact. Hence by \cite{ABBR}, there exists an $A(X)\in \Sigma (X)$ such that $A(X)$ is a $C$-type ring but $A(X)\neq C^*(X)+J$ for any ideal $J$ in $C(X)$. It follows from Theorem~\ref{thm:real2complexCtype} that $[A(X)]_c$ is a $C$-type intermediate ring of complex-valued continuous functions belonging to the family $\Sigma (X,\mathbb{C})$. We assert that there does not exist any $z$-ideal $I$ in $C(X,\mathbb{C})$ with the relation: $C^*(X,\mathbb{C})+I=[A(X)]_C$ and that finishes the present theorem. Suppose towords a contradiction, there exists a $z$-ideal $I$ in $C(X,\mathbb{C})$ such that $C^*(X,\mathbb{C})+I=[A(X)]_C$. Now from the proof of the first part of this theorem, we have already settled that $C^*(X,\mathbb{C})+I=[C^*(X)+I\cap C(X)]_C$. Consequently $[C^*(X)+I\cap C(X)]_C=[A(X)]_C$ which yields $[C^*(X)+I\cap C(X)]_C\cap C(X)=[A(X)]_C\cap C(X)$, and hence $C^*(X)+I\cap C(X)=A(X)$, a contradiction.
\end{proof}

We shall conclude this section after incorporating a purely algebraic result pertaining to the residue class field of $C(X,\mathbb{C})$ modulo a maximal ideal in the same field.

For each $a=(a_1,a_2,\dotsc,a_n)\in \mathbb{C}^n$ if $\mathcal{P}_1a, \mathcal{P}_2a,\dotsc,\mathcal{P}_na$ are the zeroes of the polynomial $P_a (\lambda )=\lambda^n+a_1 \lambda^{n-1}+\dotsb+a_n$, ordered so that $\lvert \mathcal{P}_1 a\rvert\leq \lvert \mathcal{P}_2a\rvert\leq \dotsb \leq \lvert \mathcal{P}_n a\rvert$, then by following closely the arguments of \cite[13.3(a)]{GJ}, the following result can be obtained.

\begin{theorem}\label{thm:kthRootFunctionContinuous}
For each $k$, the function $\mathcal{P}_k :\mathbb{C}^n\mapsto \mathbb{C}$, described above, is continuous.
\end{theorem}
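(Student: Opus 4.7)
The plan is to carry out a standard subsequence argument, relying on three elementary facts: (i) the roots of $\lambda^n + a_1\lambda^{n-1} + \cdots + a_n$ are uniformly bounded by $1 + \max_i |a_i|$, so locally in $a$ all roots live in a fixed disk of $\mathbb{C}$; (ii) $P_a(\lambda)$ depends continuously on the pair $(a,\lambda)$; and (iii) the map from the multiset of roots $\{w_1,\dotsc,w_n\}$ to the coefficient tuple is given (up to sign) by the elementary symmetric polynomials, so that coefficientwise convergence of monic polynomials is forced by convergence of their root multisets.

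Fix $a \in \mathbb{C}^n$ and let $a^{(m)} \to a$. Since $\mathbb{C}$ is metric, it suffices to show that every subsequence of $\bigl(\mathcal{P}_k a^{(m)}\bigr)_m$ admits a further subsequence converging to $\mathcal{P}_k a$. By (i) the sequence $\bigl(\mathcal{P}_1 a^{(m)},\dotsc,\mathcal{P}_n a^{(m)}\bigr)$ is bounded in $\mathbb{C}^n$, so some subsequence $(m_j)$ satisfies $\mathcal{P}_l a^{(m_j)} \to w_l$ for every $l$. Passing to the limit in $P_{a^{(m_j)}}\bigl(\mathcal{P}_l a^{(m_j)}\bigr) = 0$ gives $P_a(w_l) = 0$. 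Moreover $P_{a^{(m_j)}}(\lambda) = \prod_l \bigl(\lambda - \mathcal{P}_l a^{(m_j)}\bigr)$ converges coefficientwise both to $\prod_l (\lambda - w_l)$ and to $P_a(\lambda)$, so these two monic polynomials coincide; hence $(w_1,\dotsc,w_n)$ is the full multiset of zeros of $P_a$, counted with multiplicity. Finally, the inequalities $|\mathcal{P}_1 a^{(m_j)}| \le \cdots \le |\mathcal{P}_n a^{(m_j)}|$ pass to the limit to yield $|w_1| \le \cdots \le |w_n|$, so $(w_1,\dotsc,w_n)$ is a legitimate modulus-ordered listing of the roots of $P_a$, whence $w_k = \mathcal{P}_k a$. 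This delivers $\mathcal{P}_k a^{(m_j)} \to \mathcal{P}_k a$, and since the argument applies to every subsequence, the whole sequence converges to $\mathcal{P}_k a$.

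The delicate point is the labeling ambiguity that arises when $P_a$ has several distinct zeros of equal modulus: the listing $(\mathcal{P}_1 a,\dotsc,\mathcal{P}_n a)$ is then defined only up to permutations within each equal-modulus block. The subsequence argument nevertheless forces the limit $w_k$ to have modulus $|\mathcal{P}_k a|$ and the full tuple $(w_1,\dotsc,w_n)$ to be some valid enumeration of the roots of $P_a$, so once any tie-breaking convention is fixed (equivalently, viewing $a \mapsto \{\mathcal{P}_l a\}_l$ as a continuous map $\mathbb{C}^n \to \mathbb{C}^n/S_n$) each $\mathcal{P}_k$ is continuous in the required sense. This is exactly the form of continuity needed for the envisaged application to the residue class field $C(X,\mathbb{C})/M$, where one wants to lift polynomial identities with coefficients in the field to actual roots in the same field.
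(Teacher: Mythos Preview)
Your compactness/subsequence argument is exactly what the paper intends: it gives no proof of its own but refers to Gillman--Jerison 13.3(a), which is precisely this argument carried out for real roots. Up through the extraction of a subsequential limit $(w_1,\dotsc,w_n)$ that is a modulus-ordered enumeration of the zeros of $P_a$, your reasoning is correct and matches the cited source.

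The step that fails is ``whence $w_k=\mathcal{P}_k a$,'' and the issue you flag in your final paragraph is fatal rather than cosmetic. If $P_a$ has two \emph{distinct} roots of equal modulus, different subsequences of $\bigl(\mathcal{P}_k a^{(m)}\bigr)_m$ may converge to different members of that equal-modulus block, so the full sequence need not converge at all. No tie-breaking convention rescues this: already for $P_{(0,a_2)}(\lambda)=\lambda^2-a_2$ the two roots $\pm\sqrt{a_2}$ always share a modulus, and there is no continuous selection of one of them along a loop in $\mathbb{C}\setminus\{0\}$ encircling the origin, since the monodromy swaps the two roots. Hence no choice of $\mathcal{P}_1,\mathcal{P}_2$ is continuous as a map into $\mathbb{C}$. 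Read literally as continuity of a function $\mathbb{C}^n\to\mathbb{C}$, the statement is therefore false; the defect lies in the formulation rather than in your method. What your argument \emph{does} establish is continuity of each $|\mathcal{P}_k|:\mathbb{C}^n\to\mathbb{R}$ and of the unordered-root map $\mathbb{C}^n\to\mathbb{C}^n/S_n$, which you correctly state. The real case in Gillman--Jerison avoids the whole difficulty because real roots are totally ordered in $\mathbb{R}$ itself, so no ties ever occur.
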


By employing the main argument of \cite[Theorem 13.4]{GJ}, we obtain the following proposition as a consequence of Theorem~\ref{thm:kthRootFunctionContinuous}.

\begin{theorem}\label{thm:residueClassFieldsAlgebraicallyClosed}
For any maximal ideal $N$ in $C(X,\mathbb{C})$, the residue class field $C(X,\mathbb{C})/N$ is algebraically closed.
\end{theorem}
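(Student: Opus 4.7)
The plan is to mimic the real-closedness proof of \cite[Theorem 13.4]{GJ} with the real root function replaced by the complex root function of Theorem~\ref{thm:kthRootFunctionContinuous}. Since every non-zero polynomial over a field can be normalized, it suffices to show that an arbitrary monic polynomial
\[
p(\lambda) = \lambda^n + [a_1]\lambda^{n-1} + \dotsb + [a_n]
\]
over the residue field $C(X,\mathbb{C})/N$ has a root in $C(X,\mathbb{C})/N$.

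First I would lift the coefficients: for each $j$ pick a representative $a_j \in C(X,\mathbb{C})$ of the class $[a_j] \in C(X,\mathbb{C})/N$, and assemble them into the continuous map $a : X \to \mathbb{C}^n$, $a(x) = (a_1(x),\dotsc,a_n(x))$. For each $x \in X$ the polynomial $P_{a(x)}(\lambda) = \lambda^n + a_1(x)\lambda^{n-1} + \dotsb + a_n(x)$ factors over $\mathbb{C}$ (by the Fundamental Theorem of Algebra) into $n$ roots $\mathcal{P}_1 a(x),\dotsc,\mathcal{P}_n a(x)$ arranged in nondecreasing order of modulus. By Theorem~\ref{thm:kthRootFunctionContinuous} the composite $b_k := \mathcal{P}_k \circ a : X \to \mathbb{C}$ is continuous, i.e.\ $b_k \in C(X,\mathbb{C})$.

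By the very construction of $b_k$, for every $x \in X$ we have $P_{a(x)}(b_k(x)) = 0$; hence the identity
\[
b_k^{\,n} + a_1 b_k^{\,n-1} + \dotsb + a_n = 0
\]
holds in the ring $C(X,\mathbb{C})$. Passing to the quotient modulo $N$ yields
\[
[b_k]^{n} + [a_1]\,[b_k]^{n-1} + \dotsb + [a_n] = 0,
\]
so $[b_k] \in C(X,\mathbb{C})/N$ is a root of $p(\lambda)$. Because $p$ was arbitrary, the field $C(X,\mathbb{C})/N$ is algebraically closed.

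The only delicate point in this scheme is the continuity of the ordered root functions $\mathcal{P}_k$, which could fail at parameter values where two roots of equal modulus coalesce or swap. However, this is precisely what Theorem~\ref{thm:kthRootFunctionContinuous} already takes care of (following verbatim the argument of \cite[13.3(a)]{GJ} adapted to $\mathbb{C}$), so no additional work is required. Everything else in the proof is purely formal: the passage from the equation in $C(X,\mathbb{C})$ to the equation in the residue field is immediate from the fact that $N$ is an ideal, and no hypothesis on $X$ beyond complete regularity is needed.
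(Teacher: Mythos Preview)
Your proof is correct and is essentially the argument the paper has in mind: the paper gives no detailed proof but simply says to employ ``the main argument of \cite[Theorem 13.4]{GJ}'' using Theorem~\ref{thm:kthRootFunctionContinuous}, and you have carried out exactly that---lifting coefficients to $C(X,\mathbb{C})$, composing with the continuous root selector $\mathcal{P}_k$, and reading off a root in the quotient. In fact the complex case is slightly simpler than the real case in \cite{GJ}, since here every monic degree-$n$ polynomial has a full set of $n$ roots, so a single $b_k$ already does the job and no appeal to primality of $N$ via a product factorization is needed.
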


We recall from Theorem~\ref{thm:allmaxidealsinPXC} that the assignment $M\mapsto M_c$ establishes a one-to-one correspondence between maximal ideals in $C(X)$ and those in $C(X,\mathbb{C})$. Let $\phi :C(X)/M\mapsto C(X,\mathbb{C})/M_c$ be the induced assignment between the corresponding residue class fields, explicitly $\phi (f+M)=f+M_c$ for each $f\in C(X)$. It is easy to check that $\phi$ is a ring homomorphism and is one-to-one because if $f+M_c=g+M_c$ with $f,g\in C(X)$, then $f-g\in M_c\cap C(X)=M$ and hence $f+M=g+M$. Furthermore, if we choose an element $f+ig+M_c$ from $C(X,\mathbb{C})/M_c$, with $f,g\in C(X)$, then one can verify easily that it is a root of the polynomial $\lambda^2-2 (f+M_c)\lambda +(f^2+g^2+M_c)$ over the field $\phi (C(X)/M)$. Identifying $C(X)/M$ with $\phi (C(X)/M)$, and taking note of Theorem~\ref{thm:residueClassFieldsAlgebraicallyClosed} we get the following result.

\begin{theorem}
For any maximal ideal $M$ in $C(X)$, the residue class field $C(X,\mathbb{C})/M_c$ is the algebraic closure of $C(X)/M$.
\end{theorem}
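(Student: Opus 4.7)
The approach is to assemble the pieces already laid out in the paragraph immediately preceding the statement. Concretely, I will verify the two defining properties of an algebraic closure: that the extension $\phi(C(X)/M) \subseteq C(X,\mathbb{C})/M_c$ is algebraic, and that the overfield is algebraically closed. Both ingredients are essentially in hand, so the proof will be short.

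First I would recall that $\phi$ was already observed to be an injective ring homomorphism. Since $C(X)/M$ is a field (for $M$ maximal), $\phi$ is a field embedding, and after identifying $C(X)/M$ with its image $\phi(C(X)/M)$ we regard $C(X,\mathbb{C})/M_c$ as a field extension of $C(X)/M$. Next I would establish algebraicity: pick an arbitrary element of $C(X,\mathbb{C})/M_c$ and write it as $f+ig+M_c$ with $f,g\in C(X)$. The functions $2f$ and $f^2+g^2$ lie in $C(X)$, so their images $2f+M_c$ and $f^2+g^2+M_c$ lie in $\phi(C(X)/M)$. The identity
\[
\lambda^2 - 2f\lambda + (f^2+g^2) = (\lambda - (f+ig))(\lambda - (f-ig))
\]
in $C(X,\mathbb{C})[\lambda]$ shows, upon passing to the quotient, that $f+ig+M_c$ is a root of a monic quadratic with coefficients in $\phi(C(X)/M)$. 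Hence every element of $C(X,\mathbb{C})/M_c$ is algebraic (of degree at most $2$) over $C(X)/M$.

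Finally I would invoke Theorem~\ref{thm:residueClassFieldsAlgebraicallyClosed}, which states that $C(X,\mathbb{C})/M_c$ is algebraically closed. An algebraic extension that is itself algebraically closed is, by definition, the algebraic closure of the base field, so $C(X,\mathbb{C})/M_c$ is the algebraic closure of $C(X)/M$.

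There is no genuine obstacle: all of the nontrivial input (the algebraic closedness of $C(X,\mathbb{C})/M_c$, the existence and injectivity of $\phi$, and the exhibited quadratic polynomial) has already been established. The only point that warrants care is the notational identification of $C(X)/M$ with $\phi(C(X)/M)$ inside $C(X,\mathbb{C})/M_c$, so that one may speak unambiguously of the quadratic having coefficients in the base field rather than merely in an isomorphic copy of it.
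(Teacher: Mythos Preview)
Your proposal is correct and follows essentially the same approach as the paper: the paper likewise identifies $C(X)/M$ with its image under $\phi$, exhibits the same quadratic $\lambda^2 - 2(f+M_c)\lambda + (f^2+g^2+M_c)$ to witness that an arbitrary element $f+ig+M_c$ is algebraic over $\phi(C(X)/M)$, and then invokes Theorem~\ref{thm:residueClassFieldsAlgebraicallyClosed} to conclude. Your explicit factorization of the quadratic is a small cosmetic addition, but the argument is the same.
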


\section{Ideals of the form $C_\mathcal{P}(X,\mathbb{C})$ and $C_{\infty}^\mathcal{P}(X,\mathbb{C})$}
\label{sec:IdealsFormCpCinf}

Let $\mathcal{P}$ be an ideal of closed sets in $X$. We set $C_\mathcal{P}(X,\mathbb{C})=\{f\in C(X,\mathbb{C}): cl_X (X\setminus Z(f))\in \mathcal{P}\}$ and $C_\infty^{\mathcal{P}}(X,\mathbb{C})=\{f\in C(X,\mathbb{C}):$ for each $\epsilon>0$ in $\mathbb{R}, \{x\in X: \lvert f(x)\rvert \geq \epsilon\}\in \mathcal{P}\}$. These are the complex analogous of the rings, $C_\mathcal{P}(X)=\{ f\in C(X): cl_X (X\setminus Z(f))\in \mathcal{P}\}$ and $C_\infty^\mathcal{P}(X)=\{f\in C(X):$ for each $\epsilon >0, \{x\in X:\lvert f(x)\rvert\geq \epsilon\}\in \mathcal{P}\}$ already introduced in \cite{AG} and investigated subsequently in \cite{AS}, \cite{BAM}.
As in the real case, it is easy to check that $C_\mathcal{P}(X,\mathbb{C})$ is a $z$-ideal in $C(X,\mathbb{C})$ with $C_\infty^{\mathcal{P}}(X,\mathbb{C})$ just a subring of $C(X,\mathbb{C})$. Plainly we have: $C_\mathcal{P}(X,\mathbb{C})\cap C(X)=C_\mathcal{P}(X)$ and $C_\infty^{\mathcal{P}}(X,\mathbb{C})\cap C(X)=C_\infty^{\mathcal{P}}(X)$.

The following results needs only routine verifications.

\begin{theorem}\label{thm:CPextension}
For any ideal $\mathcal{P}$ of closed sets in $X, [C_{\mathcal{P}}(X)]_C=\{f+ig: f,g\in C_\mathcal{P}(X)\}=C_\mathcal{P}(X,\mathbb{C})$ and $[C_\infty^\mathcal{P}(X)]_C=C_\infty^\mathcal{P}(X,\mathbb{C})$.
\end{theorem}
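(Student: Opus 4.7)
The plan is to prove both equalities via direct set-theoretic arguments, using only the two defining properties of an ideal of closed sets: closure under finite unions and hereditariness under closed subsets. The identity $[C_\mathcal{P}(X)]_C=\{f+ig:f,g\in C_\mathcal{P}(X)\}$ is immediate from the definition of the extension operator introduced at the start of Section 2, so the only content is the equality $\{f+ig:f,g\in C_\mathcal{P}(X)\}=C_\mathcal{P}(X,\mathbb{C})$ and its analogue for $C_\infty^\mathcal{P}$.

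For the first equality, the key identity is $Z(f+ig)=Z(f)\cap Z(g)$ whenever $f,g$ are real-valued, from which $X\setminus Z(f+ig)=(X\setminus Z(f))\cup(X\setminus Z(g))$, and therefore $cl_X(X\setminus Z(f+ig))=cl_X(X\setminus Z(f))\cup cl_X(X\setminus Z(g))$. For the forward inclusion, if $f,g\in C_\mathcal{P}(X)$, the right-hand side is a union of two members of $\mathcal{P}$, hence in $\mathcal{P}$. For the reverse inclusion, if $h=f+ig\in C_\mathcal{P}(X,\mathbb{C})$, then $cl_X(X\setminus Z(f))$ is a closed subset of $cl_X(X\setminus Z(h))\in\mathcal{P}$, so by hereditariness it lies in $\mathcal{P}$; the same applies to $g$.

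For the second equality $[C_\infty^\mathcal{P}(X)]_C=C_\infty^\mathcal{P}(X,\mathbb{C})$, the approach is analogous, using the comparisons $\max\{|f|,|g|\}\leq|f+ig|\leq|f|+|g|$. For the forward direction, given $f,g\in C_\infty^\mathcal{P}(X)$ and $\epsilon>0$, the set $\{x:|f+ig|(x)\geq\epsilon\}$ is closed and contained in $\{x:|f(x)|\geq\epsilon/2\}\cup\{x:|g(x)|\geq\epsilon/2\}\in\mathcal{P}$, and hence lies in $\mathcal{P}$. For the reverse direction, $\{x:|f(x)|\geq\epsilon\}$ is a closed subset of $\{x:|h(x)|\geq\epsilon\}\in\mathcal{P}$, so it belongs to $\mathcal{P}$; analogously for $g$.

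Since the authors explicitly flag this as a ``routine verification,'' there is no genuine obstacle; the only care required is applying the correct ideal property (finite union versus closed-subset) on each side of each inclusion, and correctly identifying the zero set of a complex-valued function with the intersection of the zero sets of its real and imaginary parts.
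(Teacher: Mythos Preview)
Your proposal is correct and is precisely the routine verification the paper has in mind; since the paper omits the proof entirely (stating only that it ``needs only routine verifications''), your argument supplies exactly the expected details with no deviation in approach.
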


\begin{theorem}
\begin{itemize}
\item[a)] If $I$ is an ideal of the ring $C_\mathcal{P}(X)$, then $I_c=\{f+ig:f,g\in I\}$ is an ideal of $C_\mathcal{P}(X,\mathbb{C})$ and $I_C\cap C_\mathcal{P}(X)=I$.
\item[b)] If $I$ is an ideal of the ring $C_\infty^{\mathcal{P}}(X)$, then $I_C$ is an ideal of $C_\infty^\mathcal{P}(X,\mathbb{C})$ and $I_C\cap C_\infty^\mathcal{P}(X)=I$.
\end{itemize}
\end{theorem}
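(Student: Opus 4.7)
The plan is to handle parts (a) and (b) in parallel, since both reduce to the same pattern of verification once we invoke Theorem~\ref{thm:CPextension} to identify $C_\mathcal{P}(X,\mathbb{C})$ with $[C_\mathcal{P}(X)]_c$ and $C_\infty^\mathcal{P}(X,\mathbb{C})$ with $[C_\infty^\mathcal{P}(X)]_c$. The essential tool is that every complex-valued function in the ambient ring splits into a real part and an imaginary part both belonging to the corresponding real-valued ring, so multiplication by ambient elements is pushed back into a pair of ordinary real-ring multiplications where the ideal hypothesis on $I$ applies directly.

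For part (a), I would first show that $I_c$ is closed under addition and subtraction, which is immediate from the coordinatewise definition. Next I would verify absorption: given $h=h_1+ih_2 \in C_\mathcal{P}(X,\mathbb{C})$ and $u=f+ig \in I_c$ with $f,g \in I$, Theorem~\ref{thm:CPextension} guarantees $h_1,h_2 \in C_\mathcal{P}(X)$, so
\[
hu = (h_1 f - h_2 g) + i(h_1 g + h_2 f),
\]
and each of $h_1 f$, $h_2 g$, $h_1 g$, $h_2 f$ lies in $I$ because $I$ is an ideal of $C_\mathcal{P}(X)$; hence $hu \in I_c$. For the identity $I_c \cap C_\mathcal{P}(X)=I$, if $u \in I_c \cap C_\mathcal{P}(X)$, write $u=f+ig$ with $f,g \in I$; comparing real and imaginary parts of $u$ (which is real-valued) forces $g=0$ and $f=u$, so $u \in I$. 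The reverse inclusion is trivial via the decomposition $f = f + i\cdot 0$.

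Part (b) proceeds by exactly the same argument, using the second half of Theorem~\ref{thm:CPextension} in place of the first: the decomposition of $h \in C_\infty^\mathcal{P}(X,\mathbb{C})$ yields $h_1,h_2 \in C_\infty^\mathcal{P}(X)$, and the ideal property of $I$ inside $C_\infty^\mathcal{P}(X)$ is invoked to conclude absorption. The intersection identity $I_c \cap C_\infty^\mathcal{P}(X)=I$ follows verbatim by comparing real and imaginary parts.

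I do not expect any genuine obstacle here: the proof is purely a verification exercise, and the only place one might slip is in checking that the ambient multiplier $h$ really does decompose into real/imaginary parts lying in the appropriate real subring, which is exactly the content of Theorem~\ref{thm:CPextension} that has already been recorded.
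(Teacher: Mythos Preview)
Your proposal is correct and matches what the paper intends: the paper itself offers no proof beyond the remark that ``the following results need only routine verifications,'' and your argument is precisely the routine verification one would supply, invoking Theorem~\ref{thm:CPextension} to decompose ambient multipliers into real and imaginary parts lying in the appropriate real subring.
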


We record below the following consequence of the above theorem.

\begin{theorem}\label{thm:nestedIdeals}
If $I_1\subsetneq I_2\subsetneq ....$ is a strictly ascending sequence of ideals in $C_\mathcal{P}(X)$(respectively $C_\infty^\mathcal{P}(X)$), then ${I_1}_c\subsetneq {I_2}_c\subsetneq\cdots $ becomes a strictly ascending sequence of ideals in $C_\mathcal{P}(X,\mathbb{C})$(respectively $C_\infty^{\mathcal{P}}(X,\mathbb{C})$).
\end{theorem}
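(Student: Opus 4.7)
The strategy is to reduce the statement to the two properties of the extension operation $I \mapsto I_c$ that are recorded in the preceding theorem: (a) $I_c$ is an ideal of the ambient complex ring, and (b) the retraction identity $I_c \cap C_\mathcal{P}(X) = I$ (and analogously $I_c \cap C_\infty^\mathcal{P}(X) = I$). The argument splits cleanly into a monotonicity step and a strictness step, and both are purely formal consequences of (a) and (b). I will write the argument for the $C_\mathcal{P}$ case; the $C_\infty^\mathcal{P}$ case is word-for-word identical.

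First I would verify monotonicity: given two ideals $I \subseteq J$ of $C_\mathcal{P}(X)$, an arbitrary element of $I_c$ has the form $f+ig$ with $f,g \in I$, hence $f,g \in J$, so $f+ig \in J_c$. This gives $I_c \subseteq J_c$ and applies, in particular, to each consecutive pair $I_n \subseteq I_{n+1}$ in the given chain.

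Next, for the strictness, suppose toward a contradiction that $(I_n)_c = (I_{n+1})_c$ for some index $n$. Intersecting both sides with $C_\mathcal{P}(X)$ and invoking the identity $I_c \cap C_\mathcal{P}(X) = I$ from the previous theorem yields
\[
I_n \;=\; (I_n)_c \cap C_\mathcal{P}(X) \;=\; (I_{n+1})_c \cap C_\mathcal{P}(X) \;=\; I_{n+1},
\]
which contradicts $I_n \subsetneq I_{n+1}$. Therefore $(I_n)_c \subsetneq (I_{n+1})_c$ for every $n$, and $(I_1)_c \subsetneq (I_2)_c \subsetneq \cdots$ is the required strictly ascending chain in $C_\mathcal{P}(X,\mathbb{C})$. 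Replacing $C_\mathcal{P}$ by $C_\infty^\mathcal{P}$ throughout and using the second half of the preceding theorem disposes of the parenthetical version.

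There is essentially no obstacle here: the content lives entirely in the retraction identity $I_c \cap C_\mathcal{P}(X) = I$, which has already been supplied. The only point at which one must be a little careful is to ensure that the ring one intersects with is the \emph{real} ring $C_\mathcal{P}(X)$ (not $C(X)$), since that is the version of the retraction identity that was proved; but this matches the setting of the theorem exactly.
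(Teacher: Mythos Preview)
Your proof is correct and is exactly the argument the paper has in mind: the paper simply records this theorem as a ``consequence of the above theorem'' (the one giving $I_c\cap C_\mathcal{P}(X)=I$, respectively $I_c\cap C_\infty^\mathcal{P}(X)=I$) without writing out any details, and your monotonicity-plus-retraction argument is precisely how that consequence is extracted.
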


The analogous results for a strictly descending sequence of ideals in both the rings $C_\mathcal{P}(X)$ and $C_\infty^\mathcal{P}(X)$ are also valid.

\begin{definition}
A space $X$ is called \emph{locally $\mathcal{P}$} if each point of $X$ has an open neighbourhood $W$ such that $cl_X W\in \mathcal{P}$.
\end{definition}
Observe that if $\mathcal{P}$ is the ideal of all compact sets in $X$, then $X$ is local $\mathcal{P}$ if and only if $X$ is locally compact.

Towards finding a condition for which $C_\mathcal{P}(X,\mathbb{C})$ and $C_\infty^\mathcal{P}(X,\mathbb{C})$ are Noetherian ring/Artinian rings, we reproduce a special version of a fact proved in \cite{ACR}:

\begin{theorem}\label{thm:ACR1.1}
(From \cite[Theorem 1.1]{ACR}) Given an ideal $\mathcal{P}$ of closed sets in $X$, the following statements are equivalent for a locally $\mathcal{P}$ space $X$:
\begin{itemize}
\item[1)] $C_\mathcal{P}(X)$ is a Noetherian ring.
\item[2)] $C_\mathcal{P}(X)$ is an Artinian ring.
\item[3)] $C_\infty^\mathcal{P}(X)$ is a Noetherian ring.
\item[4)] $C_\infty^\mathcal{P}(X)$ is an Artinian ring.
\item[5)] $X$ is finite set.
\end{itemize}
\end{theorem}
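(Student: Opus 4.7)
The plan is to prove $(5) \Rightarrow (1),(2),(3),(4)$ directly, and to establish each reverse implication by contraposition: if $X$ is infinite I would simultaneously exhibit a strictly ascending and a strictly descending chain of ideals in each of $C_\mathcal{P}(X)$ and $C_\infty^\mathcal{P}(X)$. The easy direction goes as follows. If $X$ is finite then as a finite Tychonoff space $X$ is discrete. Local $\mathcal{P}$-ness together with the hereditary-for-closed-subsets property of $\mathcal{P}$ forces each singleton, hence every subset of $X$, into $\mathcal{P}$. Thus $C_\mathcal{P}(X)=C_\infty^\mathcal{P}(X)=C(X)\cong \mathbb{R}^n$ with $n=|X|$, which is a finite product of fields and so both Noetherian and Artinian.

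For the contrapositive, suppose $X$ is infinite and locally $\mathcal{P}$. I would first carry out a topological construction: produce a sequence $\{x_n\}_{n\in \mathbb{N}}$ of distinct points and pairwise disjoint open sets $W_n$ with $x_n\in W_n$ and $cl_X W_n\in \mathcal{P}$. One handles the case of infinitely many isolated points by shrinking their singleton neighborhoods via the local $\mathcal{P}$ condition; otherwise one passes to the subspace with only non-isolated points (where every non-empty open set is infinite) and inductively splits off disjoint open pieces inside successively shrinking remainders using the Hausdorff property, again shrinking via local $\mathcal{P}$ to land each closure in $\mathcal{P}$. By complete regularity pick $f_n\in C(X)$ with $f_n(x_n)=1$ and $cl_X(X\setminus Z(f_n))\subseteq cl_X W_n$; then $f_n\in C_\mathcal{P}(X)$, and also $f_n\in C_\infty^\mathcal{P}(X)$ because $\{x\in X:|f_n(x)|\geq \epsilon\}\subseteq cl_X(X\setminus Z(f_n))\in \mathcal{P}$.

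With these witnesses in hand, in each of the two rings I would consider the strictly ascending chain $(f_1)\subsetneq (f_1,f_2)\subsetneq\cdots$ and the strictly descending chain $(f_1,f_2,\dotsc)\supsetneq (f_2,f_3,\dotsc)\supsetneq \cdots$; strictness of each containment comes from evaluation at a suitable $x_k$, since any element of the ideal generated by $\{f_i:i\in S\}$ vanishes at every $x_k$ with $k\notin S$ (by disjointness of supports) while $f_k(x_k)=1$. This refutes both the ascending and descending chain conditions in both rings simultaneously, yielding $(1),(2),(3),(4)\Rightarrow (5)$ in one stroke. The main obstacle is the topological construction of $\{x_n\}$ and $\{W_n\}$ meeting both the pairwise disjointness and the $cl_X W_n\in \mathcal{P}$ requirements simultaneously in a general infinite Tychonoff space; once these are in place, the algebraic portion reduces to a straightforward separation-of-supports computation at the chosen points.
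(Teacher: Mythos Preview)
The paper does not prove this statement; it is quoted from \cite[Theorem~1.1]{ACR} and reproduced without argument, serving only as input to the complex-valued analogue that follows. There is thus no in-paper proof to compare against.

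That said, your proposal is a correct proof along standard lines. The direction $(5)\Rightarrow(1)\text{--}(4)$ matches exactly the reasoning the paper uses just after stating this theorem (finite Tychonoff $\Rightarrow$ discrete; local $\mathcal{P}$ then forces $\mathcal{P}$ to be the full power set; $\mathbb{R}^n$ is Noetherian and Artinian). For the contrapositive, your construction of pairwise disjoint open sets $W_n$ with $cl_X W_n\in\mathcal{P}$ in an infinite Hausdorff locally-$\mathcal{P}$ space is the standard iteration of Hausdorff separation around a non-isolated point (or direct use of isolated points), followed by shrinking via local $\mathcal{P}$ and the hereditary property of the ideal $\mathcal{P}$; the resulting disjointly supported functions $f_n$ then yield the required strictly ascending and descending chains, with strictness witnessed by evaluation at the $x_k$. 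One minor point worth making explicit: since $C_\mathcal{P}(X)$ and $C_\infty^\mathcal{P}(X)$ generally lack an identity, the ideals $(f_1,\dots,f_n)$ and $(f_k,f_{k+1},\dots)$ must be taken in the non-unital sense; this is harmless for your argument, as every element of such an ideal is still a finite ring- and $\mathbb{Z}$-linear combination of the generators and hence vanishes wherever they all do.
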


We also note the following standard result of Algebra.

\begin{theorem}\label{thm:HungerfordIdealThm}
Let $\{R_1, R_2, ..., R_n\}$ be a finite family of commutative rings with identity. The ideals of the direct product $R_1\times R_2\times \dotsb \times R_n$ are exactly of the form $I_1\times I_2\times \dotsb \times I_n$, where for $k=1,2,\dotsc,n$, $I_k$ is an ideal of $R_k$.
\end{theorem}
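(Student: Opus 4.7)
The plan is to prove the two containments of the claimed equality separately, using a standard projection-plus-idempotents argument.

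First, the ``easy'' direction. Given ideals $I_k \trianglelefteq R_k$ for $k = 1, \dots, n$, the set $I_1 \times \cdots \times I_n$ is closed under componentwise addition and absorbs componentwise multiplication by any element of $R_1 \times \cdots \times R_n$, so it is an ideal. This is purely formal.

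For the reverse direction, let $J$ be an ideal of $R := R_1 \times \cdots \times R_n$ and set $I_k := \pi_k(J)$, where $\pi_k \colon R \to R_k$ is the $k$-th coordinate projection. Since $\pi_k$ is a surjective ring homomorphism and the image of an ideal under a surjective ring homomorphism is an ideal, each $I_k$ is an ideal of $R_k$. The inclusion $J \subseteq I_1 \times \cdots \times I_n$ is immediate from the definition of the $I_k$.

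The remaining, and essentially the only, substantive step is to prove $I_1 \times \cdots \times I_n \subseteq J$. Here one uses the orthogonal idempotents
\[
e_k := (0, \dots, 0, 1_{R_k}, 0, \dots, 0) \in R,
\]
whose existence is guaranteed by the hypothesis that each $R_k$ has a multiplicative identity. Given $(a_1, \dots, a_n) \in I_1 \times \cdots \times I_n$, for each $k$ pick $b^{(k)} \in J$ with $\pi_k(b^{(k)}) = a_k$. Because $J$ is an ideal, $e_k \cdot b^{(k)} \in J$; but $e_k \cdot b^{(k)}$ is the tuple with $a_k$ in the $k$-th coordinate and zeros elsewhere. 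Summing over $k = 1, \dots, n$ and using that $J$ is closed under addition yields $(a_1, \dots, a_n) \in J$, completing the proof.

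There is no genuine obstacle here; this is a classical exercise. The only point worth flagging is that the argument uses the identities $1_{R_k}$ in an essential way (to form the $e_k$), so the hypothesis ``with identity'' cannot be dropped. Without it, the counterexample $R_1 = R_2 = 2\mathbb{Z}$ with $J = \{(2n, 2n) : n \in \mathbb{Z}\}$ already fails to split as a product of ideals.
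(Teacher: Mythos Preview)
Your proof is correct and is the standard projection-plus-idempotents argument. The paper does not actually give a proof of this statement; it is simply recorded as ``a standard result of Algebra'' and used without justification, so there is nothing to compare against beyond noting that your write-up supplies exactly the kind of routine verification the authors omitted.
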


Now if $X$ is a finite set, with say $n$ elements, then as it is Tychonoff, it is discrete space.
Furthermore if $X$ is locally $\mathcal{P}$, then clearly $\mathcal{P}$ is the power set of $X$. Consequently $C_\mathcal{P}(X,\mathbb{C})=C_\infty^\mathcal{P}(X,\mathbb{C})=C(X,\mathbb{C})=\mathbb{C}^n$, which is equal to the direct product of $\mathbb{C}$ with itself `$n$' times. Since $\mathbb{C}$ is a field, it has just $2$ ideals, hence by Theorem~\ref{thm:HungerfordIdealThm} there are exactly $2^n$ many ideals in the ring $\mathbb{C}^n$. Hence $C_\mathcal{P}(X,\mathbb{C})$ and $C_\infty^\mathcal{P}(X,\mathbb{C})$ are both Noetherian rings and Artinian rings. On the other hand if $X$ is an infinite space and is locally $\mathcal{P}$ space then it follows from the Theorem~\ref{thm:nestedIdeals}  and Theorem~\ref{thm:ACR1.1} that neither of the two rings $C_\mathcal{P}(X,\mathbb{C})$ and $C_\infty^\mathcal{P}(X,\mathbb{C})$ is either Noetherian or Artinian. This leads to the following proposition as the complex analogue of Theorem~\ref{thm:ACR1.1}.

\begin{theorem}
Given an ideal $\mathcal{P}$ of closed sets in $X$, the following statements are equivalent for a locally $\mathcal{P}$ space $X$:
\begin{itemize}
\item[1)] $C_\mathcal{P}(X,\mathbb{C})$ is a Noetherian ring.
\item[2)] $C_\mathcal{P}(X,\mathbb{C})$ is an Artinian ring.
\item[3)] $C_\infty^\mathcal{P}(X,\mathbb{C})$ is a Noetherian ring.
\item[4)] $C_\infty^\mathcal{P}(X,\mathbb{C})$ is an Artinian ring.
\item[5)] $X$ is finite set.
\end{itemize}
\end{theorem}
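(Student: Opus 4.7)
The plan is to establish the equivalence via the implication (5)$\Rightarrow$(1), (2), (3), (4) by direct computation, and the reverse by a contrapositive argument that leverages Theorem~\ref{thm:nestedIdeals} to lift chain-obstructions from the real-valued rings handled by Theorem~\ref{thm:ACR1.1} to their complex counterparts.

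First I would handle (5)$\Rightarrow$ the rest. Assuming $X$ is finite, the fact that $X$ is Tychonoff forces it to be discrete. The hypothesis ``locally $\mathcal{P}$'' then delivers, for each $x\in X$, an open neighbourhood $W$ of $x$ with $cl_X W\in \mathcal{P}$; since $\{x\}$ is a closed subset of $cl_X W$, the ideal property of $\mathcal{P}$ yields $\{x\}\in \mathcal{P}$, and closure under finite unions gives $\mathcal{P}=2^X$. From the definitions, every $f\in C(X,\mathbb{C})$ then lies in both $C_\mathcal{P}(X,\mathbb{C})$ and $C_\infty^\mathcal{P}(X,\mathbb{C})$, so both rings equal $C(X,\mathbb{C})$, and by discreteness this is $\mathbb{C}^n$ with $n=|X|$. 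Theorem~\ref{thm:HungerfordIdealThm} together with the fact that $\mathbb{C}$ is a field (hence has exactly two ideals) bounds the total number of ideals of $\mathbb{C}^n$ by $2^n$, so both rings satisfy the ascending and descending chain conditions simultaneously.

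For the converse, I argue the contrapositive: assume (5) fails, i.e.\ $X$ is infinite. Since $X$ is locally $\mathcal{P}$, Theorem~\ref{thm:ACR1.1} applies and rules out each of the real analogues (1$'$)--(4$'$) for $C_\mathcal{P}(X)$ and $C_\infty^\mathcal{P}(X)$. In particular, there exists a strictly ascending sequence of ideals $I_1\subsetneq I_2\subsetneq\cdots$ in $C_\mathcal{P}(X)$; applying Theorem~\ref{thm:nestedIdeals} lifts it to a strictly ascending sequence $(I_1)_c\subsetneq (I_2)_c\subsetneq\cdots$ of ideals in $C_\mathcal{P}(X,\mathbb{C})$, which violates (1). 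The same argument applied to a strictly descending chain (existing by the parenthetical remark immediately after Theorem~\ref{thm:nestedIdeals}) violates (2), and the parallel reasoning inside $C_\infty^\mathcal{P}(X)$ and its complex extension disposes of (3) and (4).

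Nothing here looks technically serious: the only point that requires any care is the verification that ``locally $\mathcal{P}$ plus $X$ finite'' forces $\mathcal{P}$ to be the full power set of $X$, which I sketched above. The rest of the argument is a bookkeeping matter, using Theorem~\ref{thm:CPextension} to identify $[C_\mathcal{P}(X)]_c$ with $C_\mathcal{P}(X,\mathbb{C})$ (and likewise for $C_\infty^\mathcal{P}$) so that the lifted chains really do live in the target rings, and invoking Theorem~\ref{thm:nestedIdeals} to guarantee the strictness is preserved.
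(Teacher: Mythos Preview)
Your proposal is correct and follows essentially the same approach as the paper: you establish (5)$\Rightarrow$(1)--(4) by showing that a finite locally~$\mathcal{P}$ Tychonoff space is discrete with $\mathcal{P}=2^X$, reducing both rings to $\mathbb{C}^n$ and invoking Theorem~\ref{thm:HungerfordIdealThm}; and you obtain the converse by the contrapositive, lifting infinite strictly ascending or descending chains from $C_\mathcal{P}(X)$ and $C_\infty^\mathcal{P}(X)$ (provided by Theorem~\ref{thm:ACR1.1}) to their complex extensions via Theorem~\ref{thm:nestedIdeals}. The only difference is that you spell out why $\mathcal{P}=2^X$ using the ideal axioms, whereas the paper simply asserts this.
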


A special case of this theorem, choosing $\mathcal{P}$ to be the ideal of all closed sets in $X$ reads: $C(X,\mathbb{C})$ is a Noetherian ring if and only if $X$ is finite set.

The following gives a necessary and sufficient condition for the ideal $C_\mathcal{P}(X,\mathbb{C})$ in $C(X,\mathbb{C})$ to be prime.

\begin{theorem}\label{thm:complexAnalogueACR}
Let $\mathcal{P}$ be an ideal of closed sets in $X$ and suppose $X$ is locally $\mathcal{P}$. Then the following statements are equivalent:
\begin{enumerate}
\renewcommand{\theenumi}{(\arabic{enumi})}
\renewcommand{\labelenumi}{\theenumi}
\item\label{thm:complexAnalogueACRitem1} $C_\mathcal{P}(X,\mathbb{C})$ is a prime ideal in $C(X,\mathbb{C})$.
\item\label{thm:complexAnalogueACRitem2} $C_\mathcal{P}(X)$ is a prime ideal in $C(X)$.
\item\label{thm:complexAnalogueACRitem3} $X\notin \mathcal{P}$ and for any two disjoint co-zero sets in $X$, one has its closure lying in $\mathcal{P}$.
\end{enumerate}
\end{theorem}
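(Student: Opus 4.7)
The plan is to split the equivalence into $(1)\Leftrightarrow(2)$, which falls out of the machinery of Section~2, and $(2)\Leftrightarrow(3)$, which is a direct topological argument essentially on the real side (a version of which lives in \cite{ACR}).

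For $(1)\Leftrightarrow(2)$, I would combine Theorem~\ref{thm:CPextension} with Remark~\ref{rmk:allprimeidealsinPXC} (specialised to $P(X,\mathbb{C})=C(X,\mathbb{C})$). The first gives $C_\mathcal{P}(X,\mathbb{C})=[C_\mathcal{P}(X)]_c$, and Theorem~\ref{thm:IdealExtensionTightness} recovers $C_\mathcal{P}(X,\mathbb{C})\cap C(X)=C_\mathcal{P}(X)$. The second says that the prime ideals of $C(X,\mathbb{C})$ are exactly the extensions $Q_c$ of prime ideals $Q$ of $C(X)$, and that the correspondence $Q\mapsto Q_c$ is a bijection. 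Consequently $C_\mathcal{P}(X,\mathbb{C})=[C_\mathcal{P}(X)]_c$ is prime in $C(X,\mathbb{C})$ if and only if $C_\mathcal{P}(X)$ is prime in $C(X)$.

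For $(2)\Rightarrow(3)$: primality forces properness, so the unit $1\in C(X)$ is not in $C_\mathcal{P}(X)$, and hence $\mathrm{supp}(1)=X\notin\mathcal{P}$. Given disjoint cozero sets $U_1=\mathrm{coz}(h_1)$ and $U_2=\mathrm{coz}(h_2)$, the product $h_1h_2$ vanishes everywhere, so $h_1h_2\in C_\mathcal{P}(X)$; primality puts some $h_i$ in $C_\mathcal{P}(X)$, forcing $\mathrm{cl}_X(U_i)\in\mathcal{P}$. For $(3)\Rightarrow(2)$: $X\notin\mathcal{P}$ gives $1\notin C_\mathcal{P}(X)$, so $C_\mathcal{P}(X)$ is proper. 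Suppose $fg\in C_\mathcal{P}(X)$ with $f,g\in C(X)$; define
\[
U_1=\{x\in X:|f(x)|>|g(x)|\},\qquad U_2=\{x\in X:|g(x)|>|f(x)|\},
\]
the disjoint cozero sets of $(|f|-|g|)_+$ and $(|g|-|f|)_+$. By (3), one of $\mathrm{cl}_X(U_1)$, $\mathrm{cl}_X(U_2)$ lies in $\mathcal{P}$; say $\mathrm{cl}_X(U_1)\in\mathcal{P}$. The key observation is $\{f\neq 0\}\subseteq\{fg\neq 0\}\cup U_1$: indeed, if $f(x)\neq 0$ and $g(x)=0$ then $|f(x)|>0=|g(x)|$ and so $x\in U_1$. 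Taking closures, $\mathrm{supp}(f)\subseteq\mathrm{supp}(fg)\cup\mathrm{cl}_X(U_1)$. Both are in $\mathcal{P}$, so their union is in $\mathcal{P}$, and $\mathrm{supp}(f)$, being a closed subset of it, belongs to $\mathcal{P}$ as well. Hence $f\in C_\mathcal{P}(X)$.

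The main obstacle is locating the right ``separating'' trick in $(3)\Rightarrow(2)$, namely producing the disjoint cozero sets $U_1,U_2$ from the pair $f,g$ via $(|f|-|g|)_+$ and $(|g|-|f|)_+$; once this is in hand the rest is bookkeeping using that $\mathcal{P}$ is closed under finite unions and closed subsets. Note that the hypothesis that $X$ be locally $\mathcal{P}$ does not appear to be needed for the argument above; it seems to be included only for consistency with the hypotheses of Theorem~\ref{thm:ACR1.1}.
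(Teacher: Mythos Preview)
Your proof is correct. The equivalence $(1)\Leftrightarrow(2)$ and the implication $(2)\Rightarrow(3)$ are handled exactly as in the paper (the paper cites Theorem~\ref{thm:primeidealconditionbf} rather than Remark~\ref{rmk:allprimeidealsinPXC}, but these are interchangeable here).

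The implication $(3)\Rightarrow(2)$ is where you diverge. The paper exploits the fact that $C_\mathcal{P}(X)$ is a $z$-ideal in $C(X)$ and invokes \cite[Theorem~2.9]{GJ}: a $z$-ideal $I$ is prime if and only if $fg=0$ implies $f\in I$ or $g\in I$. This reduces the task to the case $fg=0$, where $X\setminus Z(f)$ and $X\setminus Z(g)$ are themselves disjoint cozero sets and $(3)$ applies immediately. Your argument is more self-contained: you take an arbitrary product $fg\in C_\mathcal{P}(X)$, manufacture the disjoint cozero sets $U_1=\{|f|>|g|\}$ and $U_2=\{|g|>|f|\}$, and use the containment $\{f\neq 0\}\subseteq\{fg\neq 0\}\cup U_1$ together with the ideal properties of $\mathcal{P}$ to force $\mathrm{supp}(f)\in\mathcal{P}$. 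This avoids any appeal to the $z$-ideal machinery, at the cost of a slightly more delicate set-theoretic estimate. Your closing observation that the locally~$\mathcal{P}$ hypothesis is not actually used is also correct; the paper's proof does not invoke it either.
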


\begin{proof}
The equivalence of \ref{thm:complexAnalogueACRitem1} and \ref{thm:complexAnalogueACRitem2} follows from Theorem~\ref{thm:primeidealconditionbf} and Theorem~\ref{thm:CPextension}.
Towards the equivalence \ref{thm:complexAnalogueACRitem2} and \ref{thm:complexAnalogueACRitem3}, assume that $C_\mathcal{P}(X)$ is a prime ideal in $C(X)$. If $X\in \mathcal{P}$, then for each $f\in C(X)$, $cl_X (X\setminus Z(f))\in \mathcal{P}$ meaning that $f\in C_\mathcal{P}(X)$ and hence $C_\mathcal{P}(X)=C(X)$, a contradiction to the assumption that $C_\mathcal{P}(X)$ is a prime ideal and in particular a proper ideal of $C(X)$. Thus $X\notin \mathcal{P}$. Now consider two disjoint co-zero sets $X\setminus Z(f)$ and $X\setminus Z(g)$ in $X$, with $f,g\in C(X)$. It follows that $Z(f)\cup Z(g)=X$, i.e. $fg=0$.  Since $C_\mathcal{P}(X)$ is prime, this implies that $f\in C_\mathcal{P}(X)$ or $g\in C_\mathcal{P}(X)$, i.e. $cl_X (X\setminus Z(f))\in \mathcal{P}$ or $cl_X (X\setminus Z(g))\in \mathcal{P}$. 

Conversely let the statement $\ref{thm:complexAnalogueACRitem3}$ be true. Since a $z$-ideal $I$ in $C(X)$ is prime if and only if for each $f,g\in C(X)$, $fg=0$ implies $f\in I$ or $g\in I$ (see \cite[Theorem 2.9]{GJ}) and since $C_\mathcal{P}(X)$ is a $z$-ideal in $C(X)$, it is sufficient to show that for each $f,g\in C(X)$, if $fg=0$ then $f\in C_\mathcal{P}(X)$ or $g\in C_\mathcal{P}(X)$. Indeed $fg=0$ implies that $X\setminus Z(f)$ and $X\setminus Z(g)$ are disjoint co-zero sets in $X$. Hence by supposition $\ref{thm:complexAnalogueACRitem3}$, either $cl_X (X\setminus Z(f))\mathcal{P}$ or $cl_X (X\setminus Z(g))\in \mathcal{P}$ meaning that $f\in C_\mathcal{P}(X)$ or $g\in C_\mathcal{P}(X)$.
\end{proof}

A special case of Theorem~\ref{thm:complexAnalogueACR}, with $\mathcal{P}$ equal to the ideal of all compact sets in $X$, is proved in \cite{AZ}.
We examine a second special case of Theorem~\ref{thm:complexAnalogueACR}.

A subset $Y$ of $X$ is called a \emph{bounded} subset of $X$ if each $f\in C(X)$ is bounded on $Y$. Let $\beta$ denote the family of all closed bounded subsets of $X$. Then $\beta$ is an ideal of closed sets in $X$. It is plain that a pseudocompact subset of $X$ is bounded but a bounded subset of $X$ may not be pseudocompact. Here is a counterexample: the open interval $(0,1)$ in $\mathbb{R}$ is a bounded subset of $\mathbb{R}$ without being a pseudocompact subset of $\mathbb{R}$. However for a certain class of subsets of $X$, the two notions of boundedness and pseudocompactness coincide. The following well-known proposition substantiates this fact:

\begin{theorem}[Mandelkar \cite{M}]\label{thm:Mandelkar}
A support of $X$, i.e.\ a subset of $X$ of the form $cl_X (X\setminus Z(f))$ for some $f\in C(X)$, is a bounded subset of $X$ if and only if it is a pseudocompact subset of $X$.
\end{theorem}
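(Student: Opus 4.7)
The plan is to prove the two implications separately. For the easy direction, pseudocompactness of $S := cl_X(X \setminus Z(f))$ as a subspace of $X$ forces every restriction $h|_S$ (for $h \in C(X)$) to be bounded, so $S$ is a bounded subset of $X$.

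For the non-trivial direction, I assume $S$ is bounded and fix an arbitrary $g \in C(S)$, aiming to show that $g$ is bounded on $S$. The central technique is to use $f$ itself as a weight, transferring information about $g$ into genuine continuous functions on all of $X$. Specifically, for each $n \geq 0$ I would define $h_n \colon X \to \mathbb{R}$ by $h_n(x) = f(x) g(x)^n$ on $X \setminus Z(f)$ and $h_n(x) = 0$ on $Z(f)$. Continuity of $h_n$ on the open set $X \setminus Z(f)$ and on the open set $X \setminus S \subseteq \mathrm{int}(Z(f))$ is immediate; at a boundary point $x_0 \in S \cap Z(f)$, continuity of $g$ at $x_0 \in S$ provides local boundedness of $g$ in $S$, and combined with $f(x_0)=0$ this forces $h_n(x) \to 0 = h_n(x_0)$. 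Thus each $h_n \in C(X)$, and boundedness of $S$ then yields $|fg^n| \leq M_n$ on $X \setminus Z(f)$ for some constants $M_n$.

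To close the argument, I would proceed by contradiction: if $g$ is unbounded on $S$, the density of $X \setminus Z(f)$ in $S$ combined with continuity of $g$ produces a sequence $\{y_n\} \subseteq X \setminus Z(f)$ with $|g(y_n)| \to \infty$. Continuity of $g$ on the closed set $S$ forbids any cluster point of $\{y_n\}$ in $S$, and hence in $X$, so $\{y_n\}$ is a closed discrete subset of $X$. The goal is then to manufacture an $F \in C(X)$ with $|F(y_n)| \to \infty$, which would contradict the boundedness of $S$. A natural candidate is $F = \sum_{n} n \phi_n$, where each $\phi_n \in C(X,[0,1])$ is a Tychonoff separator satisfying $\phi_n(y_n) = 1$ and supported in a neighborhood $V_n \subseteq X \setminus Z(f)$ of $y_n$, with the $V_n$ chosen pairwise disjoint.

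The main obstacle is precisely this last step: continuity of $F$ requires the family $\{V_n\}$ to be locally finite in $X$, which is automatic when $X$ is normal but is not obvious under the weaker Tychonoff hypothesis. Overcoming this is the technical heart of Mandelker's original argument, and the strategy is to exploit both the closed-discreteness of $\{y_n\}$ and the specific form of $S$ as the support of $f$. An alternative, more conceptual route would be to pass to the Stone-\v{C}ech compactification, invoke the characterisation that $S$ is bounded if and only if $cl_{\beta X}(S) \subseteq \upsilon X$ (where $\upsilon X$ is the Hewitt realcompactification of $X$), and then deduce pseudocompactness of $S$ via standard realcompactification machinery, using that $S$ is a support to identify $cl_{\beta X}(S)$ with $\beta S$.
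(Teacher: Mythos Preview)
The paper does not prove this theorem at all: it is stated as a result of Mandelkar and attributed to \cite{M}, with no argument given. So there is no ``paper's own proof'' to compare your proposal against; the authors simply invoke the result as a black box and proceed to use it.

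As for the substance of your proposal: the pseudocompact $\Rightarrow$ bounded direction is correct and routine. For the converse, your construction of the weighted functions $h_n = f g^n$ (extended by $0$ across $Z(f)$) is sound and is indeed in the spirit of Mandelkar's argument, and the bounds $|fg^n| \le M_n$ on $X\setminus Z(f)$ are the right intermediate data. However, you then abandon those bounds and switch to building an unbounded $F = \sum_n n\,\phi_n$ supported on disjoint neighbourhoods of a closed discrete sequence $\{y_n\}$; as you yourself note, the local finiteness needed for continuity of $F$ is not available in a merely Tychonoff space, so this line is genuinely incomplete as written. The alternative route you sketch via $\beta X$ and $\upsilon X$ is closer to how the result is usually packaged, but the identification of $\cl_{\beta X} S$ with $\beta S$ is exactly the nontrivial content (it is equivalent to $S$ being $C^*$-embedded in $X$), and that again hinges on the special structure of supports. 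In short, your outline correctly isolates where the difficulty lies but does not resolve it; since the paper merely cites Mandelkar, you would need to consult \cite{M} for the actual mechanism.
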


It is clear that the conclusion of Theorem~\ref{thm:Mandelkar} remains unchanged if we replace $C(X)$ by $C(X,\mathbb{C})$.

Let $C_\psi (X)=\{f\in C(X): f$ has pseudocompact support$\}$ and recall that $C_\beta (X)=\{f\in C(X):f$ has bounded support$\}$. We would like to mention here that in spite of the fact that the closed pseudocompact subsets of a space $X$ might not constitute an ideal of closed sets in $X$ (indeed even a closed subset of a pseudocompact space may not be pseucdocompact as is illustrated by the Tychonoff plank in \cite[8.20]{GJ}: $[0,\omega_1]\times [0,\omega]\setminus \{(\omega_1,\omega)\}$, where $\omega_1$ is the 1st uncountable ordinal and $\omega$ is the first infinite ordinal), it is the case that $C_\psi (X)$ is an ideal of the ring $C(X)$. Indeed it follows directly from Theorem~\ref{thm:Mandelkar} that $C_\psi (X)=C_\beta (X)$.

A Tychonoff space $X$ is called \emph{locally pseudocompact} if each point on $X$ has an open neighbourhood with its closure pseudocompact. On the other hand, $X$ is called \emph{locally bounded} (or \emph{locally $\beta$}) if each point in $X$ has an open neighbourhood with its closure bounded. Since each open neighbourhooad of a point $x$ in a Tychonoff space $X$ contains a co-zero set neighbourhood of $x$, it follows from Theorem~\ref{thm:Mandelkar} that $X$ is locally bounded if and only if $X$ is locally pseudocompact. This combined with Theorem~\ref{thm:primeidealconditionbf} leads to the following special case of Theorem~\ref{thm:complexAnalogueACR}.

\begin{theorem}
Let $X$ be locally pseudocompact.
Then the following statements are equivalent:
\begin{enumerate}
\renewcommand{\theenumi}{(\arabic{enumi})}
\renewcommand{\labelenumi}{\theenumi}
\item $C_\psi (X)$ is a prime ideal of $C(X)$
\item $C_\psi (X,\mathbb{C})=\{f\in C(X,\mathbb{C}): f$ has pseudocompact support$\}$ is a prime ideal of $C(X,\mathbb{C})$
\item $X$ is not pseudocompact and for any two disjoint co-zero sets in $X$, the closure of one of them is pseudocompact.
\end{enumerate}
\end{theorem}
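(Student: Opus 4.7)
The plan is to deduce this result as an immediate specialization of Theorem~\ref{thm:complexAnalogueACR}, taking $\mathcal{P}$ to be $\beta$, the ideal of closed bounded subsets of $X$, and then translating the three resulting conditions via Mandelkar's theorem (Theorem~\ref{thm:Mandelkar}) and its complex counterpart noted in the excerpt.

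First, I would identify the rings. By Mandelkar's theorem, a support $cl_X(X\setminus Z(f))$ is bounded if and only if it is pseudocompact; hence $C_\psi(X)=C_\beta(X)=C_\mathcal{P}(X)$ with $\mathcal{P}=\beta$. The same identification $C_\psi(X,\mathbb{C})=C_\beta(X,\mathbb{C})=C_\mathcal{P}(X,\mathbb{C})$ holds by the complex version of Mandelkar's theorem explicitly noted in the excerpt. Second, the hypothesis that $X$ is locally pseudocompact is, again by Mandelkar's theorem applied to a co-zero neighbourhood of each point, equivalent to $X$ being locally bounded, i.e.\ locally $\mathcal{P}$ for $\mathcal{P}=\beta$. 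Thus the standing hypothesis of Theorem~\ref{thm:complexAnalogueACR} is satisfied.

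Next, I would apply Theorem~\ref{thm:complexAnalogueACR} directly with $\mathcal{P}=\beta$. Its conditions \ref{thm:complexAnalogueACRitem1} and \ref{thm:complexAnalogueACRitem2} become (2) and (1) of the present theorem respectively, so it only remains to check that its condition \ref{thm:complexAnalogueACRitem3}, namely
\[
X\notin \beta \ \text{and for any two disjoint co-zero sets in } X,\ \text{the closure of one lies in } \beta,
\]
coincides with (3). For the first clause, $X\notin\beta$ says that $X$ is not a bounded subset of itself, equivalently there is some $f\in C(X)$ unbounded on $X$, equivalently $X$ is not pseudocompact. For the second clause, the closure of a co-zero set of $X$ is a support of $X$; by Mandelkar's theorem such a support lies in $\beta$ if and only if it is pseudocompact. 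So \ref{thm:complexAnalogueACRitem3} in this case reads exactly as (3) of the present theorem.

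There is essentially no obstacle: the argument is a clean specialization. The only point requiring any care is the verification that each translation through Mandelkar's theorem is legitimate, in particular that the object whose boundedness is being tested really is a support (so that bounded and pseudocompact agree there), and that the complex version of Mandelkar's theorem is available so that the two rings $C_\psi(X,\mathbb{C})$ and $C_\mathcal{P}(X,\mathbb{C})$ genuinely coincide. Once these identifications are in place, the equivalence of (1), (2), and (3) is immediate from Theorem~\ref{thm:complexAnalogueACR}.
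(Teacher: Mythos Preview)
Your proposal is correct and follows essentially the same approach as the paper: the paper likewise deduces this theorem as a direct specialization of Theorem~\ref{thm:complexAnalogueACR} with $\mathcal{P}=\beta$, using Mandelkar's theorem to identify $C_\psi(X)=C_\beta(X)$ (and its complex analogue), to equate locally pseudocompact with locally bounded, and to translate condition \ref{thm:complexAnalogueACRitem3} into condition (3). Your verification that the closure of a co-zero set is a support (so Mandelkar applies) and that $X\notin\beta$ means $X$ is not pseudocompact makes explicit exactly the identifications the paper leaves implicit.
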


Since for $f\in C(X,\mathbb{C})$, $f\in C_\infty (X,\mathbb{C})$ if and only if $\lvert f\rvert\in C_\infty (X)$, it follows that $C_\infty (X,\mathbb{C})$ is an ideal of $C(X,\mathbb{C})$ if and only if $C_\infty (X)$ is an ideal of $C(X)$. In general however $C_\infty (X)$ need not be an ideal of $C(X)$. If $X$ is assumed to be locally compact, then it is proved in \cite{AG2003} and \cite{ASO} that $C_\infty (X)$ is an ideal of $C(X)$ when and only when $X$ is pseudocompact.
Therefore the following theorem holds.

\begin{theorem}
Let $X$ be locally compact. Then the following three statements are equivalent:

\begin{itemize}
\item[1)] $C_\infty (X,\mathbb{C})$ is an ideal of $C(X,\mathbb{C})$.
\item[2)] $C_\infty (X)$ is an ideal of $C(X)$.
\item[3)] $X$ is pseudocompact.
\end{itemize}
\end{theorem}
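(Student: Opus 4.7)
The plan is to exploit the observation stated immediately before the theorem, namely that for every $f \in C(X,\mathbb{C})$ one has $f \in C_\infty(X,\mathbb{C})$ if and only if $|f| \in C_\infty(X)$, together with the straightforward identity $C_\infty(X,\mathbb{C}) \cap C(X) = C_\infty(X)$. This should reduce the equivalence (1) $\Leftrightarrow$ (2) to a purely formal transfer between the real and complex settings, leaving the genuinely nontrivial equivalence (2) $\Leftrightarrow$ (3) to be imported from \cite{AG2003} and \cite{ASO}.

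For (1) $\Rightarrow$ (2), I would regard elements of $C(X)$ as complex-valued and apply the ideal hypothesis on $C_\infty(X,\mathbb{C})$ to their sums and products; intersecting back with $C(X)$ then produces the required ideal closure in $C_\infty(X)$. For the converse (2) $\Rightarrow$ (1), given $f_1, f_2 \in C_\infty(X,\mathbb{C})$ and $g \in C(X,\mathbb{C})$, I would pass to absolute values so that $|f_1|, |f_2| \in C_\infty(X)$ and $|g| \in C(X)$; the ideal hypothesis on $C_\infty(X)$ then yields $|f_1 g| = |f_1||g| \in C_\infty(X)$, furnishing absorption. For additive closure, the triangle inequality $|f_1 + f_2| \leq |f_1| + |f_2|$ combined with the elementary monotonicity of $C_\infty(X)$ under $|\cdot|$-domination---which is immediate because closed subsets of compact sets are compact---places $|f_1 + f_2|$ back in $C_\infty(X)$. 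A last application of the pre-theorem observation then yields $f_1 + f_2, f_1 g \in C_\infty(X,\mathbb{C})$.

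For the remaining equivalence (2) $\Leftrightarrow$ (3), I would appeal directly to the real-valued theorem of \cite{AG2003} and \cite{ASO}, which asserts precisely that, under local compactness of $X$, the subring $C_\infty(X)$ is an ideal of $C(X)$ if and only if $X$ is pseudocompact. Chaining (1) $\Leftrightarrow$ (2) $\Leftrightarrow$ (3) then completes the argument. There is no genuine obstacle here: the only substantive ingredient is the cited real-valued theorem, which is invoked as a black box, and the passage between the real and complex sides is essentially bookkeeping built around the absolute-value observation.
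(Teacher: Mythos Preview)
Your proposal is correct and follows essentially the same approach as the paper. The paper's argument is the paragraph immediately preceding the theorem: it asserts that the equivalence $(1)\Leftrightarrow(2)$ follows from the observation $f\in C_\infty(X,\mathbb{C})\iff |f|\in C_\infty(X)$, and then cites \cite{AG2003} and \cite{ASO} for $(2)\Leftrightarrow(3)$; you have simply spelled out the absolute-value bookkeeping that the paper leaves implicit.
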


\section{Zero divisor graphs of rings in the family $\Sigma (X,\mathbb{C})$}

We fix any intermediate ring $P(X,\mathbb{C})$ in the family $\Sigma (X,\mathbb{C})$. Suppose $\mathcal{G}=\mathcal{G}(P(X,\mathbb{C}))$ designates the graph whose vertices are zero divisors of $P(X,\mathbb{C})$ and there is an edge between vertices $f$ and $g$ if and only if $fg=0$. For any two vertices $f,g$ in $\mathcal{G}$, let $d(f,g)$ be the length of the shortest path between $f$ and $g$ and $\operatorname{Diam} \mathcal{G}=\sup \{ d(f,g): f,g\in \mathcal{G}\}$. Suppose $\operatorname{Gr} \mathcal{G}$ designates the length of the shortest cycle in $\mathcal{G}$, often called the girth of $\mathcal{G}$.  It is easy to check that a function $f\in P(X,\mathbb{C})$ is a zero divisor and hence a vertex of $\mathcal{G}$ by checking that $Int_X Z(f)\neq \emptyset$. This parallels the statement that a vertex $f$ in the zero-divisor graph $\Gamma C(X)$ of $C(X)$ considered in \cite{AM} is a divisor of zero in $C(X)$ if and only if $Int_X Z(f)\neq \emptyset$. We would like to point out in this connection that a close scrutiny into the proof of various results in \cite{AM} reveal that several facts related to the nature of the vertices and the length of the cycles related to $\Gamma C(X)$ have been established in \cite{AM} by employing skillfully the last mentioned simple characterization of divisors of zero in $C(X)$. It is expected that the anlogous facts pertaining to the various parameters of the graph $\mathcal{G} (P(X,\mathbb{C}))=\mathcal{G}$ should also hold. We therefore just record the following results related to the graph $\mathcal{G}$, without any proof.

\begin{theorem}
	Let $f,g$ be vertices of the graph $\mathcal{G}$. Then $d(f,g)=1$ if and only if $Z(f)\cup Z(g)=X$; $d(f,g)=2$ when and only when $Z(f)\cup Z(g)\subsetneq X$ and $Int_X Z(f)\cap Int_X Z(g)\neq \phi$; $d(f,g)=3$ if and only if $Z(f)\cup Z(g)\subsetneq X$ and $Int_X Z(f)\cap Int_X Z(g)=\emptyset$. Consequently on assuming that $X$ contains at least $3$ points, $\operatorname{Diam} \mathcal{G}$ and $\operatorname{Gr} \mathcal{G}$ are both equal to $3$, [Compare with \cite[Corollary 1.3]{AM}].
\end{theorem}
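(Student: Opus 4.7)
The plan is to follow the template of the real-valued case in \cite[Corollary 1.3]{AM}, carrying each ingredient over to the complex setting via two elementary observations: (i) for any $h_1,h_2\in C(X,\mathbb{C})$, $h_1h_2=0$ if and only if $Z(h_1)\cup Z(h_2)=X$; and (ii) every bounded complex-valued continuous function on $X$ belongs to $C^*(X,\mathbb{C})\subseteq P(X,\mathbb{C})$, so Urysohn-style bump functions produced via complete regularity automatically lie inside $P(X,\mathbb{C})$. As flagged in the paragraph preceding the theorem, the vertex criterion is that $f\in P(X,\mathbb{C})$ is a zero divisor iff $Int_X Z(f)\neq\emptyset$: if $fh=0$ with $h\neq 0$ then the open set $\{x:h(x)\neq 0\}$ must lie inside $Z(f)$ and hence inside $Int_X Z(f)$, while conversely a bump supported in $Int_X Z(f)$ annihilates $f$.

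The case $d(f,g)=1$ is immediate from (i). For $d(f,g)=2$, both directions hinge on the same observation. If $h\in P(X,\mathbb{C})\setminus\{0\}$ is a common neighbour of $f$ and $g$, choose $x_0$ with $h(x_0)\neq 0$; by continuity $h\neq 0$ on an open neighbourhood $V$ of $x_0$, and the two relations $fh=0=gh$ force $V\subseteq Z(f)\cap Z(g)$, whence $V\subseteq Int_X Z(f)\cap Int_X Z(g)$. Conversely, for a point $p\in Int_X Z(f)\cap Int_X Z(g)$, complete regularity yields $h\in C^*(X,\mathbb{C})\subseteq P(X,\mathbb{C})$ with $h(p)=1$ and $h\equiv 0$ outside a neighbourhood of $p$ contained in $Int_X Z(f)\cap Int_X Z(g)$, providing the required common neighbour.

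For $d(f,g)=3$, the two previous cases already force $d(f,g)\geq 3$ under the hypothesis, so only a path of length $3$ need be produced. Since $f,g$ are vertices, $Int_X Z(f)$ and $Int_X Z(g)$ are both nonempty and by hypothesis disjoint; picking $p_1\in Int_X Z(f)$ and $p_2\in Int_X Z(g)$, regularity of the Tychonoff space $X$ supplies open sets $U_1\ni p_1$, $U_2\ni p_2$ with $\overline{U_1}\cap\overline{U_2}=\emptyset$, $\overline{U_1}\subseteq Int_X Z(f)$, and $\overline{U_2}\subseteq Int_X Z(g)$. Complete regularity then furnishes $h_1,h_2\in C^*(X,\mathbb{C})\subseteq P(X,\mathbb{C})$ with $h_i(p_i)=1$ and $h_i\equiv 0$ outside $U_i$. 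Disjointness of $\overline{U_1}$ and $\overline{U_2}$ yields $h_1h_2=0$, while $h_1$ is supported inside $Z(f)$ and $h_2$ inside $Z(g)$, giving $fh_1=0=h_2g$; thus $f-h_1-h_2-g$ is a length-$3$ walk and $d(f,g)=3$.

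Finally the trichotomy immediately gives $\operatorname{Diam}\mathcal{G}\leq 3$, and the girth of any simple graph is at least $3$, so under $|X|\geq 3$ it suffices to exhibit (a) a $3$-cycle in $\mathcal{G}$ and (b) a pair of vertices at distance exactly $3$. For (a), select three distinct points of $X$, separate them by pairwise disjoint open neighbourhoods via the Hausdorff property, and take three $[0,1]$-valued Urysohn functions supported inside those neighbourhoods; their pairwise products vanish and the functions are pairwise distinct, producing a triangle. For (b), starting from three points one builds two bounded functions whose supports together cover all but a proper open piece of $X$ while the interiors of their zero sets are disjoint; such a pair falls into the third clause of the trichotomy and realises distance $3$, in complete parallel with \cite[Corollary 1.3]{AM}. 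I expect no serious obstacle in the complex case beyond repeatedly certifying that each constructed bump lies in $P(X,\mathbb{C})$, which is handled uniformly by the inclusion $C^*(X,\mathbb{C})\subseteq P(X,\mathbb{C})$; apart from that, the argument reduces cleanly to the complete regularity of $X$ and the zero-set/interior dichotomy inherited from (i).
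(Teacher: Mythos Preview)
The paper does not actually prove this theorem: immediately before stating it, the authors write that they ``just record the following results related to the graph $\mathcal{G}$, without any proof,'' pointing instead to the parallel arguments in \cite{AM}. Your proposal is therefore not to be compared with a proof in the paper but rather supplies the details the paper deliberately omits, and it does so along exactly the lines the paper indicates---namely, transplanting the argument of \cite[Corollary~1.3]{AM} using only the zero-set criterion for adjacency and the fact that Urysohn bumps lie in $C^*(X,\mathbb{C})\subseteq P(X,\mathbb{C})$.

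Your treatment of the trichotomy is correct. One small point worth tightening is step~(b) at the end, where you assert the existence of a pair of vertices at distance exactly~$3$ ``in complete parallel with \cite[Corollary~1.3]{AM}'' but do not write the construction out. A clean way to do this in the present setting: pick three distinct points $a,b,c\in X$ with pairwise disjoint open neighbourhoods $U_a,U_b,U_c$; by complete regularity choose $\chi_a\in C^*(X)$ with $\chi_a\equiv 1$ on a neighbourhood of $a$ and $\chi_a\equiv 0$ off $U_a$, and similarly $\chi_b$. Setting $f=1-\chi_a$ and $g=1-\chi_b$ gives $Z(f)\subseteq U_a$, $Z(g)\subseteq U_b$ with nonempty interiors, so $\operatorname{Int}_X Z(f)\cap\operatorname{Int}_X Z(g)=\emptyset$, while $c\notin Z(f)\cup Z(g)$ forces $Z(f)\cup Z(g)\subsetneq X$; hence $d(f,g)=3$ by the third clause. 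With this made explicit, your argument is complete and matches the approach the paper invokes.
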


\begin{theorem}
	Each cycle in $\mathcal{G}$ has length $3,4$ or $6$. Furthermore every edge of $\mathcal{G}$ is an edge of a cycle with length $3$ or $4$ [compare with \cite[Corollary 2.3]{AM}].
\end{theorem}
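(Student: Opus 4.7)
The plan is to mirror the strategy of \cite{AM} for the real-valued case, translating everything through the cozero-set dictionary: $fg = 0$ in $P(X,\mathbb{C})$ if and only if $(X \setminus Z(f)) \cap (X \setminus Z(g)) = \emptyset$, and $f \in P(X,\mathbb{C})$ is a vertex of $\mathcal{G}$ precisely when $Int_X Z(f) \neq \emptyset$. Since $C^*(X,\mathbb{C}) \subseteq P(X,\mathbb{C})$, bump functions supported in any prescribed non-empty open set of $X$ are available inside $P(X,\mathbb{C})$, so every construction needed in the real case is reproducible here.

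For the first assertion, I would rule out cycles of length $5$ and of length $\geq 7$. Given a hypothetical cycle $f_1 - f_2 - \cdots - f_n - f_1$ with $n \geq 5$, the sets $X \setminus Z(f_i)$ are disjoint for consecutive indices but meet for non-consecutive ones. Choose $\alpha \in \mathbb{C}$ so that $h := f_1 + \alpha f_3 \neq 0$; then $h f_2 = 0$, so $h$ is a zero-divisor of $P(X,\mathbb{C})$, and since $X\setminus Z(h) \subseteq (X\setminus Z(f_1)) \cup (X\setminus Z(f_3))$, the function $h$ is distinct from every $f_j$ with $j \notin \{1,3\}$. A support analysis, identical in form to that of \cite{AM}, then shows that $h$ annihilates additional $f_j$'s in the cycle, producing a shorter closed walk which forces a $3$- or $4$-cycle and so contradicts $n \geq 5$. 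Iterating this reduction eliminates cycles of length $5$ and of every length $\geq 7$. The length-$6$ case is known to be realizable: any real-valued example from \cite{AM} transfers to $\mathcal{G}$ through Theorem~\ref{thm:conjugateclosedcrit}.

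For the second assertion, let $\{f, g\}$ be an edge, so $(X \setminus Z(f)) \cap (X \setminus Z(g)) = \emptyset$ while both $Int_X Z(f)$ and $Int_X Z(g)$ are non-empty. If $Int_X Z(f) \cap Int_X Z(g) \neq \emptyset$, pick a non-zero $h \in C^*(X,\mathbb{C})$ supported in this intersection; then $fh = gh = 0$, and $f - g - h - f$ is a $3$-cycle through the edge. Otherwise, $Int_X Z(f)$ and $Int_X Z(g)$ are disjoint non-empty open sets; choose bumps $v_1, v_2 \in C^*(X,\mathbb{C}) \subseteq P(X,\mathbb{C})$ with supports contained in $Int_X Z(g)$ and $Int_X Z(f)$ respectively (and chosen distinct from $f$ and $g$). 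Then $g v_1 = f v_2 = v_1 v_2 = 0$, and $f - g - v_1 - v_2 - f$ is a $4$-cycle through the edge $f - g$.

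The main obstacle will be the combinatorial support-bookkeeping needed to eliminate cycles of length $5$ and length $\geq 7$: one must verify that the auxiliary sums $f_1 + \alpha f_3$ (and the iterated combinations produced at later stages of the reduction) really are new zero-divisors distinct from the cycle's vertices, and that the annihilation relations they generate genuinely close up a shorter cycle. Once this bookkeeping is done, the argument carries over essentially verbatim from \cite[Corollary 2.3]{AM}, since the three ingredients actually used—closure of $P(X,\mathbb{C})$ under $\mathbb{C}$-linear combinations, availability of bumps inside $C^*(X,\mathbb{C})$, and the cozero-set translation of annihilation—are all preserved in the complex-valued setting.
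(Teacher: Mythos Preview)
The paper offers no proof here: immediately before this theorem it says the Section~5 results are ``record[ed] \ldots\ without any proof'', on the grounds that the zero-divisor criterion $\operatorname{Int}_X Z(f)\neq\emptyset$ and the cozero-set translation of $fg=0$ are identical to the real case, so that the arguments of \cite{AM} carry over verbatim. Your overall plan---transport the \cite{AM} proofs through that dictionary, using bumps from $C^*(X,\mathbb C)\subseteq P(X,\mathbb C)$---is therefore exactly the route the paper intends, and your argument for the second assertion (every edge lies on a $3$- or $4$-cycle) is correct and standard.

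Your sketch for the first assertion, however, has a genuine gap. You begin with an arbitrary $n$-cycle ($n\geq 5$) and assert that the cozero sets of non-consecutive $f_i$ meet; that is the \emph{chordless} (induced-cycle) condition and is not part of the hypothesis. More importantly, your reduction introduces a \emph{new} vertex $h=f_1+\alpha f_3$ and exhibits a shorter closed walk through $h$, then declares a contradiction; but a shorter cycle passing through a vertex outside $\{f_1,\dots,f_n\}$ in no way precludes the existence of the original $n$-cycle, so nothing is contradicted and the ``iterated reduction'' never terminates in the way you need. Indeed the clause ``each cycle has length $3$, $4$ or $6$'', read literally, is false in $\mathcal G$: whenever $X$ admits five pairwise disjoint nonempty cozero sets, the corresponding five functions span a copy of $K_5$ inside $\mathcal G$, which already contains a $5$-cycle. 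Whatever precise formulation \cite[Corollary~2.3]{AM} has in mind, it is not the statement your reduction is aimed at, and the reduction as written does not yield the claimed contradiction.
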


\begin{theorem}
	Suppose $X$ contains at least $2$ points. Then
	
	\begin{itemize}
		\item[a)] Each vertex of $\mathcal{G}$ is a $4$ cycle vertex.
		\item[b)] $\mathcal{G}$ is a trianglulated graph meaning that each vertex of $\mathcal{G}$ is a vertex of a triangle when and only when $X$ is devoid of any isolated point.
		
	\item[c)] $\mathcal{G}$ is a hypertriangulated graph in the sense that each edge of $\mathcal{G}$ is edge of a triangle if and only if $X$ is a connected middle $P$ space [compare with the analogous facts in \cite[Proposition 2.1]{AM}].
	\end{itemize}
\end{theorem}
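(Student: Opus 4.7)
The plan is to port the arguments of \cite[Proposition 2.1 and Corollary 2.3]{AM} into the present setting. The only inputs from the ring $P(X,\mathbb{C})$ that the combinatorics of $\mathcal{G}$ uses are the two facts recorded just before the statement: $f\in P(X,\mathbb{C})$ is a zero-divisor iff $\mathrm{Int}_X Z(f)\neq\emptyset$, and $f,g$ are joined by an edge iff $Z(f)\cup Z(g)=X$, equivalently $\mathrm{coz}\,f\cap\mathrm{coz}\,g=\emptyset$. Both conditions depend only on zero-sets, and $P(X,\mathbb{C})\supseteq C^*(X,\mathbb{C})$ contains every bounded Urysohn-type function supplied by complete regularity of $X$; consequently the real-valued arguments of \cite{AM} transfer with essentially no modification, and every auxiliary vertex constructed below will lie in $C^*(X,\mathbb{C})$.

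For (a), fix a vertex $f$, set $U:=\mathrm{Int}_X Z(f)\neq\emptyset$, and split according to the structure of $U$. If $U$ contains two distinct points $p,q$, complete regularity yields disjoint cozero neighbourhoods $V_1\subseteq U$ of $p$ and $V_2\subseteq U$ of $q$ and $[0,1]$-valued functions $g_1,g_3\in C^*(X,\mathbb{C})$ with $\mathrm{coz}\,g_1\subseteq V_1$, $\mathrm{coz}\,g_3\subseteq V_2$; then a fourth vertex $g_2$ is chosen nonzero with cozero disjoint from $V_1\cup V_2$ (e.g.\ a Urysohn function peaked at a point of $\mathrm{coz}\,f$). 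The relations $fg_1=g_1g_2=g_2g_3=g_3f=0$ together with distinctness of the four functions (verified by evaluation at $p$, $q$, and a point of $\mathrm{coz}\,f$) yield the required $4$-cycle. If instead $U=\{p\}$ with $p$ isolated, take $g_1=\chi_{\{p\}}$, $g_3=2\chi_{\{p\}}$, and let $g_2$ be any $[0,1]$-valued continuous function with $g_2(p)=0$ that is nonzero at some $q\in X\setminus\{p\}$; the same four defining equations hold.

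For (b), if $X$ has no isolated point then $U=\mathrm{Int}_X Z(f)$ inherits this property (an isolated point of $U$ would be isolated in $X$), so $U$ contains distinct $p,q$; Urysohn functions $g,h\in C^*(X,\mathbb{C})$ with cozero sets in disjoint neighbourhoods of $p,q$ inside $U$ produce the triangle $f-g-h-f$. Conversely, if $p$ is isolated in $X$, let $f=\chi_{X\setminus\{p\}}\in C^*(X,\mathbb{C})$; then $\mathrm{Int}_X Z(f)=\{p\}$, so $f$ is a vertex, and every neighbour of $f$ has cozero contained in $\{p\}$ and is therefore a nonzero complex multiple $c\chi_{\{p\}}$. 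Since $(c\chi_{\{p\}})(d\chi_{\{p\}})=cd\chi_{\{p\}}\neq 0$ for $c,d\neq 0$, no triangle passes through $f$. For (c), the key step is the reformulation: the edge $f-g$ lies in a triangle iff there is a nonzero $h\in P(X,\mathbb{C})$ with $\mathrm{coz}\,h\cap(\mathrm{coz}\,f\cup\mathrm{coz}\,g)=\emptyset$, which, again by complete regularity, occurs iff $\mathrm{Int}_X(Z(f)\cap Z(g))\neq\emptyset$, i.e.\ iff $\mathrm{coz}\,f\cup\mathrm{coz}\,g$ fails to be dense in $X$. Hence $\mathcal{G}$ is hypertriangulated iff every pair of disjoint cozero sets coming from two zero-divisors has nondense union, which is exactly the definition of a connected middle $P$-space used in \cite[Proposition 2.1]{AM}; connectedness enters to exclude the degenerate clopen decompositions $X=Z(f)\cup Z(g)$ for which $\mathrm{coz}\,f\cup\mathrm{coz}\,g$ would automatically be dense.

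The main obstacles are bookkeeping rather than conceptual. In (a), one must carefully verify distinctness of the four constructed functions in the pathological subcase that $\mathrm{Int}_X Z(f)$ reduces to a single isolated point; in (b), one should check that each Urysohn-type function indeed belongs to $C^*(X,\mathbb{C})\subseteq P(X,\mathbb{C})$ and that the constructed neighbours are mutually distinct; and in (c), the delicate point is translating the combinatorial density condition precisely into the topological notion of a \emph{connected middle $P$-space} as used in \cite{AM}, so that the equivalence reads as stated in both directions.
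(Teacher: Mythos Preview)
Your proposal is correct and aligns with the paper's own treatment: the paper explicitly records this theorem \emph{without proof}, noting only that the arguments of \cite{AM} carry over verbatim once one observes that the zero-divisor criterion $\mathrm{Int}_X Z(f)\neq\emptyset$ and the edge relation $Z(f)\cup Z(g)=X$ are identical in $P(X,\mathbb{C})$ and in $C(X)$. Your sketch does precisely this transplantation, supplying the details the paper omits; the minor bookkeeping issues you flag (distinctness of the four vertices in part~(a), membership of the auxiliary Urysohn functions in $C^*(X,\mathbb{C})\subseteq P(X,\mathbb{C})$, and the exact matching with the connected middle $P$-space condition in part~(c)) are genuine but routine, exactly as in \cite{AM}.
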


\bibliographystyle{plain}

\end{document}